\documentclass[12pt]{amsart}
\usepackage{amsfonts,amsmath,amssymb,amscd,mathrsfs}
\usepackage{xcolor}
\usepackage{soul}
\sethlcolor{cyan}
\usepackage{verbatim}
\usepackage{enumitem}
\usepackage{hyperref}
\usepackage{tikz-cd}
\newtheorem{theorem}{Theorem}[section]
\newtheorem{lemma}[theorem]{Lemma}
\newtheorem{corollary}[theorem]{Corollary}
\newtheorem{proposition}[theorem]{Proposition}

\theoremstyle{definition}
\newtheorem{definition}[theorem]{Definition}
\newtheorem{example}[theorem]{Example}

\theoremstyle{remark}
\newtheorem{remark}[theorem]{Remark}

\def \D{\mathbb{D}}
\def \N{\mathbb{N}}
\def \Z{\mathbb{Z}}

\def \C{\mathbb{C}}
\def \T{\mathbb{T}}

\newcommand{\clb}{\mathcal{B}}

\newcommand{\cle}{\mathcal{E}}
\newcommand{\clh}{\mathcal{H}}

\newcommand{\clm}{\mathcal{M}}
\newcommand{\cln}{\mathcal{N}}

\newcommand{\clk}{\mathcal{K}}

\newcommand{\clf}{\mathcal{F}}

\newcommand{\clo}{\mathcal{O}}

\newcommand{\ran}{\mathrm{ran}}

\newcommand{\rank}{\mathrm{rank}}

\newcommand{\ind}{\operatorname{ind}}

\newcommand{\ol}{\overline}
\newcommand\restr[2]{\ensuremath{\left.#1\right|_{#2}}}

\numberwithin{equation}{section}

\subjclass[2020]{Primary: 47B35, 47A53; Secondary: 47A13, 47B20, 47D03}
\keywords{Odometer map, Fock representations, analytic Toeplitz operators, Fredholm index, subnormal operators, inner functions, Coburn's theorem, Douglas lemma}

\begin{document}

\title[Odometer maps on Fock spaces]{Odometer maps on Fock spaces: block decompositions, Toeplitz-type realizations, and the adjoint}
\author{Mansi Anil Suryawanshi}
\address{Department of Mathematics, Technion—Israel Institute of Technology, Haifa 32000, Israel.}
\email{suryawanshi@campus.technion.ac.il; mansisuryawanshi1@gmail.com}

\date{\today}

\maketitle	

\begin{abstract}
We study odometer maps $W_L$ on vector-valued full Fock spaces arising from Fock representations of the odometer semigroup. We obtain a canonical upper triangular block decomposition
\[
W_L=
\begin{pmatrix}
W_{11} & W_{12}\\
0 & W_{22}
\end{pmatrix},
\]
where $W_{11}$ is unitary and $W_{22}$ admits a Hardy space realization as an analytic Toeplitz operator $M_\Theta$. The associated symbol $\Theta\in H^\infty_{\mathcal{B}(\mathcal{E})}(\mathbb{D})$ is used to characterize the isometric, unitary, and invertible cases, as well as norm identities and Douglas-type factorization properties of $W_L$.

We also derive an explicit formula for $W_L^*$ for arbitrary bounded symbols $L$. In the isometric case, this identifies $\ker W_L^*$ with $\mathcal{E}_L\ominus L\mathcal{E}$, and hence $\operatorname{mult}(W_L)=\dim(\mathcal{E}_L\ominus L\mathcal{E})=\operatorname{mult}(M_\Theta)$. In the same setting, the condition $\dim(\mathcal{E}_L\ominus L\mathcal{E})<\infty$ is equivalent both to Fredholmness and to essential normality of $W_L$, with $\operatorname{ind}(W_L)=-\dim(\mathcal{E}_L\ominus L\mathcal{E})$. We further obtain Coburn-type spectral consequences and a necessary condition for hyponormality.
\end{abstract}

\section{Introduction}

The odometer semigroup arises naturally at the interface of combinatorial semigroup theory, topological dynamics, and noncommutative operator algebras. For an integer \(n\geq 1\), the \emph{odometer semigroup} \(\clo_n\) is the semigroup with generators
\(w,v_1,\ldots,v_n\) satisfying
\[
w v_i =
\begin{cases}
v_{i+1}, & 1 \leq i \leq n-1,\\
v_1 w, & i=n.
\end{cases}
\]
It is also known as the adding-machine semigroup. This semigroup is closely related to the positive Baumslag--Solitar monoid $BS(1,n)^+$; see, for instance, \cite{Foreman}. It also admits descriptions in terms of Zappa--Sz\'ep products \cite{Geb}.

The representation theory of semigroups and their associated
$C^*$-algebras forms an important theme in operator theory, with
connections to the work of Cuntz \cite{cuntz1977}, Nica \cite{Anica}, and Pimsner \cite{pimsner1997}. In the case of the odometer semigroup, related structures also appear in the study of topological subshifts, boundary quotient constructions, and scale-invariant dynamical systems \cite{brownlowe2012,julien2016,laca1996,li2022}. The dynamical origin of the subject is the classical topological odometer. Given integers $k_i\geq 2$, the odometer acts on the compact product space $\prod_{i\in\N}\mathbb Z_{k_i}$
by addition of the sequence $(1,0,0,\ldots)$ according to the usual carry rule \cite{Foreman}; see also \cite{Brin,Bruin,Del}. Odometer-type dynamical systems and their variants have appeared in the study of Toeplitz algebras \cite{Clark}, von Neumann algebras \cite{Fima}, $C^*$-algebras \cite{Spiel}, and semigroup \(C^*\)-algebras \cite{Brown,Hli,xli,Anica}.

In \cite{mansi2025}, the representation theory of the odometer semigroup was developed in the setting of vector-valued full Fock spaces. More precisely, Fock representations of $\clo_n$ were classified in terms of operator-valued symbols. For each $L\in \clb(\cle,\clf_n^2\otimes\cle)$, an associated odometer map $W_L\in \clb(\clf_n^2\otimes\cle)$ was introduced. It was shown that, if $S^\cle$ denotes the standard row
isometry of left creation operators on $\clf_n^2\otimes\cle$, then a pair $(W,S^\cle)$ is a Fock representation of $\clo_n$ if and only if there exists a uniquely determined symbol
$L\in \clb(\cle,\clf_n^2\otimes\cle)$ such that
\[
W=W_L.
\]
Thus the classification of Fock representations is reduced to the study of the corresponding operator $W_L$. While \cite{mansi2025} established the existence of these maps and characterized the conditions under which they give rise to isometric, unitary, and Nica-covariant representations, the intrinsic geometric and operator-theoretic structure of $W_L$ remained to be investigated.

The construction of $W_L$ is reminiscent of the symbol calculus for noncommutative Toeplitz operators developed by Popescu \cite{popescu}. Nevertheless, unlike the Toeplitz setting, the symbol in the odometer framework
is allowed to vary throughout the entire Fock space, leading to a substantially
richer structure. Motivated by the concluding remarks of \cite{mansi2025}, the purpose of the present paper is to undertake a systematic study of the geometry, function-theoretic model, and operator-theoretic properties of odometer maps.

The central point of the paper is that an odometer map, although defined on
a noncommutative Fock space and governed by a noncommutative carrying rule,
contains a canonical single-variable analytic Toeplitz component. This
allows one to transfer questions about isometry, invertibility, norm,
Fredholmness, and defect multiplicity to the associated analytic symbol.

The first main result of the paper is a canonical block decomposition of
\(W_L\), together with a Hardy space realization of its Toeplitz-type block. In Section~\ref{sec:BDHSR}, we introduce two canonical closed subspaces
\(\clm, \cln \subseteq \clf_n^2\otimes\cle\), which yield the orthogonal
decompositions
\[
\clf_n^2\otimes\cle=\cln\oplus\cln^\perp
\qquad\text{and}\qquad
\clf_n^2\otimes\cle=\clm\oplus\clm^\perp.
\]
With respect to these decompositions, \(W_L\) admits the upper triangular block
representation
\[
W_L=
\begin{pmatrix}
W_{11} & W_{12}\\
0 & W_{22}
\end{pmatrix}.
\]

We observe that $W_{11}$ is unitary. Consequently, whenever
$\cle\neq\{0\}$ and $n\geq 2$, the odometer map $W_L$ cannot be compact. For $n=1$, we show that \emph{the only compact odometer maps are the zero maps}. This mirrors the corresponding compactness phenomenon for Toeplitz operators. The off-diagonal block $W_{12}:\cln^\perp\to \clm$  is either zero or has infinite rank. We refer to $W_{22}:\cln^\perp\to \clm^\perp$ as the \emph{Toeplitz-type operator} associated with $W_L$, and we define its \emph{Toeplitz-type realization} on a vector-valued Hardy space. A key result of the paper shows that this realization is unitarily equivalent to the analytic Toeplitz operator
\[
M_\Theta \quad \text{on } H^2_{\cle}(\D),
\]
where the symbol $\Theta\in H^\infty_{\clb(\cle)}(\D)$
is uniquely determined by \(L\). We also present a collection of explicit examples illustrating how different choices of the symbol \(L\) give rise to concrete
analytic symbols \(\Theta\), including constant symbols, shift-type symbols, and Blaschke-product symbols. 

For \(n=1\), the odometer map is essentially an analytic Toeplitz-type
operator. In contrast, for \(n\geq 2\), the canonical block decomposition   
measures the deviation of \(W_L\) from the classical Toeplitz framework. In the special  case \(L\cle\subseteq \clm^\perp\), 
\(W_L\) becomes the direct sum of a unitary operator and a Toeplitz-type
operator. Moreover, the Toeplitz-type realization of the principal block translates
questions about the multivariable noncommutative Fock space operator \(W_L\)
into questions about the single-variable analytic symbol \(\Theta\). This brings
Hardy space operator theory and \(H^\infty\)-function methods into the analysis
of odometer representations and establishes a direct connection with the
classical framework of analytic Toeplitz operators and Sz.-Nagy--Foiaş dilation
theory \cite{SzNagyFoias,Paulsen2002,Pisier2001}.

In Section~\ref{sec:appl_block_decomp}, we apply this decomposition to study
operator-theoretic properties of \(W_L\). First, we prove a
Douglas-type factorization theorem for odometer maps; see
Subsection~\ref{sec:Douglas-factorization-odometer-maps}. By bridging the noncommutative geometry of the free Fock space with the classical Hardy algebra, we unlock a novel, purely operator-theoretic mechanism for such factorizations. More precisely, we
show that factorization at the level of symbols is equivalent to a
corresponding factorization of the associated odometer maps. Namely, if \(L_1,L_2\in\clb(\cle,\clf_n^2\otimes\cle)\), then
\(\ran L_1\subseteq \ran L_2\) if and only if there exists a bounded operator \(\Gamma\in\clb(\clf_n^2\otimes\cle)\) satisfying
\[
\Gamma(\Omega\otimes\cle)\subseteq \Omega\otimes\cle
\quad\text{and}\quad W_{L_1}=W_{L_2}\Gamma.
\]
Equivalently, the following diagram commutes:
\[
\begin{tikzcd}
& \clf_n^2\otimes \cle
\arrow[d, "\Gamma"]
\arrow[dl, swap, "W_{L_1}"]
\\
\clf_n^2\otimes \cle
&
\clf_n^2\otimes \cle
\arrow[l, "W_{L_2}"]
\end{tikzcd}
\]
Under the geometric assumptions $L_i\cle \subseteq \clm^{\perp},\, i=1,2$, the multivariable Douglas factorization of the odometer maps descends into a single-variable analytic factorization of their associated analytic symbols. If
\(\Theta_i\in H^\infty_{\clb(\cle)}(\D)\) are the analytic symbols associated with \(L_i\), then \(\ran L_1\subseteq \ran L_2\) implies the existence of $\Theta_3 \in H^\infty_{\clb(\cle)}(\D)$ such that
\[
\Theta_1(z) = \Theta_2(z) \Theta_3 (z), \qquad z \in \D, 
\]
where $\Theta_3$ is a constant operator-valued function given by $\Theta_3(z) \equiv C$. Here $C \in \clb(\cle)$ satisfies $L_1 =L_2C.$ Thus the Fock-space factorization of odometer maps induces a single-variable analytic multiplier factorization of the corresponding Hardy space symbols.

Second, the Hardy space realization yields characterizations of several
basic operator-theoretic properties of $W_L$. In
Subsection~\ref{sec:Characterizations}, we give new characterizations of the isometry and unitarity of $W_L$ in terms of the associated analytic symbol $\Theta$, yielding shorter and more conceptual proofs of the corresponding criteria from \cite{mansi2025}. We also obtain a new invertibility criterion: $W_L$ is invertible precisely when $\Theta$ is invertible in $H^\infty_{\clb(\cle)}(\D)$. Furthermore, we illustrate by examples the distinction between unitarity and invertibility in terms of the analytic symbol. 

In Subsection~\ref{sec:norm_W_L}, we obtain norm formulae. For an arbitrary symbol $L\in\clb(\cle,\clf_n^2\otimes\cle)$, it was shown in \cite[Remark~2.4]{mansi2025} that $\|L\|\leq \|W_L\|\leq \|L\|+1.$ Under the natural assumption
$L\cle\subseteq \clm^\perp$, we obtain the exact norm
\[
\|W_L\|=\max\{1,\|\Theta\|_\infty\},
\]
where $\Theta$ is the analytic symbol associated with the Toeplitz-type block of $W_L$. In particular, if \(\|\Theta\|_\infty\leq 1\), then \(\|W_L\|=1\) and hence the standard Sz.-Nagy--Foias dilation-theoretic consequences apply, including the existence of a unitary dilation and a contractive functional calculus for the disk algebra. Under a stronger geometric hypothesis, the above formula further reduces to
\[
\|W_L\|=\max\{1,\|L\|\}.
\]

A second main theme of the paper is the adjoint of an odometer map, detailed in Section~\ref{sec:The adjoint of odometer map}. Due to the highly noncommutative and symbol-dependent action of the odometer map, explicitly resolving its adjoint on the full Fock space requires a careful
analysis of the underlying odometer combinatorics. While \cite{mansi2025} extracted an adjoint formula strictly under the rigid assumption that $W_L$ is an isometry, we overcome these hurdles to derive a closed-form expression for $W_L^*$ for \emph{every} bounded symbol $L$. This formula provides a useful technical tool for the subsequent analysis. As a consequence, we recover the previously known isometric adjoint formula as a special case, thereby substantially extending the earlier theory. 

Section~\ref{sec:applications_adjoint} is devoted to consequences of the adjoint formula. The main object in this part is the space $\cle_L$ defined as 
\begin{equation*} 
\cle_{L} = \ol{\text{span}}\{e_{1}^{ \otimes m}\otimes\eta : m\in\Z_{+},\eta\in\cle\} \ominus \ol{\text{span}}\{e_{1}^{\otimes p}\otimes L\zeta : p\geq 1,\zeta\in\cle\}.
\end{equation*}
We show that the position of $L\cle$ inside $\cle_L$
controls the isometric and Fredholm structure of the odometer map. For an isometry $W_L$, we prove the wandering subspace for the shift part in its Wold decomposition is identified with the defect space $\cle_L\ominus L\cle$:
\[
\ker W_L^*=\cle_L\ominus L\cle.
\]  
The following example shows that for an isometric odometer map, $\dim (\cle_L\ominus L\cle) $ is precisely the multiplicity of the shift part of $W_L$. Moreover, this defect space is reflected in the associated analytic Toeplitz operator $M_\Theta$. In particular, the multiplicity of the shift part of $W_L$ is encoded by the associated analytic symbol:
\[
\ker M_\Theta^*=U_1(\cle_L\ominus L\cle),
\qquad
\operatorname{mult}(W_L)=\operatorname{mult}(M_\Theta).
\]

For an isometric odometer map $W_L$, the case $\dim(\cle_L\ominus L\cle)=0$ characterizes unitary maps via the Wold decomposition, and gives a shorter proof of \cite[Theorem~4.4]{mansi2025}. The condition $\dim(\cle_L\ominus L\cle)<\infty$
is equivalent both to the Fredholmness of \(W_L\) and to the essential normality of \(W_L\). In this case,
\[
\operatorname{ind}(W_L)=-\,\dim(\cle_L\ominus L\cle).
\]
Under the same finite-defect hypothesis, we prove a Coburn-type spectral theorem:
For every $\lambda\in\D$, the operator $W_L-\lambda I$ is Fredholm and
\[
\ind(W_L-\lambda I)
=
-\dim(\cle_L\ominus L\cle).
\]
Furthermore, in this case, if $W_L$ is a proper isometry, then its essential spectrum is the unit circle. We also illustrate, through examples, that the dimension of the defect space coincides with the absolute value of the corresponding Fredholm index.
Finally, the adjoint formula yields a necessary condition for
hyponormality: if $W_L$ is hyponormal, then
\[
\|L\eta\|\geq \|\eta\|,\qquad \eta\in\cle.
\]
We illustrate through an example that this condition is not sufficient.

The paper is organized as follows. Section~\ref{sec:Preliminaries} contains the necessary preliminaries on full Fock spaces, odometer maps,
and vector-valued Hardy spaces. Section~\ref{sec:BDHSR} establishes the canonical block decomposition of the odometer map and the Hardy space realization of its Toeplitz-type block. Section~\ref{sec:appl_block_decomp} develops applications of the block decomposition, including Douglas-type factorization, characterizations of isometric, unitary, and invertible
odometer maps, and norm formulae. Section~\ref{sec:Examples} contains examples illustrating how the analytic symbol $\Theta$ can be computed explicitly from the defining
symbol $L$. These examples also illustrate the characterization criteria for $W_L$ to be an isometry, a unitary operator, or an invertible operator. In Section~\ref{sec:The adjoint of odometer map}, we compute explicitly the adjoint of an arbitrary bounded odometer map. Section~\ref{sec:applications_adjoint} uses this formula to analyze
\(\ker W_L^*\), the Wold decomposition, Fredholmness, essential normality,
Coburn-type spectral phenomena, and hyponormality.

\section{Preliminaries} \label{sec:Preliminaries}

In this section, we fix notation and collect standard results that will be used throughout the paper. For background on operator theory, Hardy spaces, and $C^*$-algebras, we refer the reader to \cite{arveson1976,douglas1998banach,SzNagyFoias}. Let $\clh$ be a Hilbert space. Throughout this paper, all Hilbert spaces are assumed to be separable and over $\mathbb C$. If $\clm$ is a closed subspace of $\clh$, we write $P_{\clm}$ for the orthogonal projection of $\clh$ onto $\clm$. An operator $N\in\clb(\clh)$ is called \emph{normal} if
\[
N^*N=NN^*.
\]
An operator $V\in\clb(\clh)$ is called an \emph{isometry} if
\[
V^*V=I.
\]
An isometry $V$ is called \emph{pure} if $V^{*m}\xrightarrow[m\to\infty]{\mathrm{SOT}}0.$
An isometry $U\in\clb(\clh)$ is called \emph{unitary} if it is
surjective; equivalently,
\[
U^*U=UU^*=I.
\]

We now recall the classical vector-valued Hardy space model together with the basic operator-theoretic notions that will be used repeatedly in the sequel. For a Hilbert space $\cle$, let
\[
H^2_{\cle}(\D)
=
\left\{
f(z)=\sum_{m=0}^{\infty} z^m \eta_m :
\eta_m\in \cle,\;
\sum_{m=0}^{\infty}\|\eta_m\|^2<\infty
\right\}
\]
denote the \(\cle\)-valued Hardy space on the unit disc \(\D\).
Equipped with the norm $\|f\|^2 = \sum_{m=0}^{\infty}\|\eta_m\|^2,$
it is a Hilbert space. The unilateral shift on \(H^2_{\cle}(\D)\) is the multiplication operator
\[
M_z f(z)=z f(z),
\qquad f\in H^2_{\cle}(\D).
\]
We define
\begin{align*}
 H^\infty_{\clb(\cle)}(\D) &=
\left\{ \Theta:\D\to \clb(\cle)\,:\, \Theta \text{ is analytic and }
\sup_{z\in\D}\|\Theta(z)\|<\infty
\right\}. \\
\end{align*}
For an operator-valued bounded analytic function $\Theta \in H^\infty_{\clb(\cle)}(\D),$
the corresponding analytic Toeplitz operator is defined by
\[
M_\Theta f(z)=\Theta(z)f(z),
\qquad f\in H^2_{\cle}(\D).
\]
It is well known that $\|M_\Theta\|=\|\Theta\|_\infty
:=
\sup_{|z|<1}\|\Theta(z)\|.$ Moreover, a bounded operator $T\in \clb(H^2_{\cle}(\D))$ is an analytic Toeplitz operator if and only if \[ M_zT=TM_z. \] Equivalently, \[ \{M_z\}' = \{M_\Theta:\Theta\in H^\infty_{\clb(\cle)}(\D)\}. \]
The multiplication operator $M_\Theta:H^2_{\cle}(\D)\to H^2_{\cle}(\D)$ is an isometry if and only if $\Theta$ is \emph{inner}, that is,
\[
\Theta(\zeta)^*\Theta(\zeta)=I_\cle
\quad \text{for a.e. } \zeta\in \partial\D.
\]
Moreover, $M_\Theta$ is unitary if and only if $\Theta$ is a constant unitary operator-valued function.

A noncommutative analogue of the Hardy space is the full Fock space, which plays a central role in the theory of Cuntz--Pimsner algebras and semigroup crossed products \cite{davidson1996, Anica, pimsner1997}. The full Fock space over $\C^n$ is denoted by
\[
\clf_n^2=\C\Omega\oplus\bigoplus_{k=1}^{\infty}(\C^n)^{\otimes k},
\]
where $\Omega$ is the vacuum vector. For a word
$\mu=\mu_1\cdots\mu_k\in F_n^+$, we write $e_\mu=e_{\mu_1}\otimes\cdots\otimes e_{\mu_k},$
and $e_\emptyset=\Omega$. Then $\{e_\mu:\mu\in F_n^+\}$
is the canonical orthonormal basis of \(\clf_n^2\). The length of a word $\mu$ is denoted by $|\mu|$. For $k\in\{1,\ldots,n\}$ and $m\in \Z_+$, we abbreviate $e_k^{\otimes m}$ by $e_k^m$.
For each $i=1,\ldots,n$, the $i$-th creation operator $S_i$ on $\clf_n^2$ is defined by
\[
S_i f=e_i\otimes f.
\]
More generally, for a Hilbert space $\cle$, we write $S^\cle=(S_1\otimes I_\cle,\ldots,S_n\otimes I_\cle)$
for the amplified left creation tuple on $\clf_n^2\otimes\cle$. This is a pure row isometry; that is, for every  $f\in\clf_n^2\otimes\cle$,
\[
\lim_{m\to\infty}
\sum_{\substack{|\mu|=m\\ \mu\in F_n^+}}
\|(S_\mu^*\otimes I_\cle)f\|^2=0,
\]
where $S_\mu=S_{\mu_1}\cdots S_{\mu_m}$ whenever $ \mu=\mu_1\cdots\mu_m\in F_n^+$ \cite[Remark 1.1]{popescu1989}. These creation operators play a fundamental role in noncommutative operator theory and arise naturally in free analytic models \cite{Frazho,popescu18}.

Our main focus is on odometer maps associated with Fock representations of the odometer semigroup. For a natural number $n\ge 1$, the odometer semigroup $\clo_n$ is generated by $\{w, v_1,\ldots,v_n\}$, subject to the relations
\[
wv_i=
\begin{cases}
v_{i+1}, & 1\le i\le n-1,\\
v_1w, & i=n.
\end{cases}
\]

A representation of \(\clo_n\) on \(\clh\) consists of a pair \((W,T)\), where $T=(T_1,\ldots,T_n)$
is a row operator on $\clh$ and $W\in \clb(\clh),$
such that
\[
WT_i=
\begin{cases}
T_{i+1}, & i=1,\ldots,n-1,\\[4pt]
T_1W, & i=n.
\end{cases}
\]

When $\clh=\clf_n^2\otimes \cle$ and $(W,S^\cle)$ is a representation of $\clo_n$, we call $(W,S^\cle)$ a \emph{Fock representation}. If, in addition, $W$ is an isometry (respectively, a unitary or a contraction), then $(W,S^\cle)$ is called an \emph{isometric} (respectively, \emph{unitary} or \emph{contractive}) Fock representation.

It was shown in \cite[Theorem 2.2]{mansi2025} that $(W,S^{\cle})$ is a Fock representation if and only if there exists $L \in \clb(\cle,\clf_n^2 \otimes \cle)$
such that
\[
W=W_L,
\]
where $W_L \in \clb(\clf_n^2 \otimes \cle)$ is given by
\[
W_L(\Omega \otimes \eta)=L\eta,
\qquad \eta \in \cle,
\]
and for $e_\mu=e_{\mu_1}\otimes \cdots \otimes e_{\mu_m}\in  F_n^+,\, \mu\neq \emptyset,$
define
\begin{equation}\label{eq:defn-W_L}
 W_L(e_\mu\otimes \eta)=
\begin{cases}
e_{\mu_1+1}\otimes e_{\mu_2}\otimes \cdots \otimes e_{\mu_m}\otimes \eta,
& \mu_1\neq n,\\[6pt]
e_1\otimes e_{\mu_2+1}\otimes e_{\mu_3}\otimes \cdots \otimes e_{\mu_m}\otimes \eta,
& \mu_1=n,\ \mu_2\neq n,\\[6pt]
e_1^{ 2}\otimes e_{\mu_3+1}\otimes e_{\mu_4}\otimes \cdots \otimes e_{\mu_m}\otimes \eta,
& \mu_1=\mu_2=n,\ \mu_3\neq n,\\[6pt]
\vdots\\[4pt]
e_1^{ m}\otimes L\eta,
& \mu_1=\cdots=\mu_m=n.
\end{cases}   
\end{equation}
Moreover, the symbol $L$ is uniquely determined, and $\|L\|\le \|W_L\|\le \|L\|+1.$
The operator $W_L$ is called the \emph{odometer map} with symbol $L$. A notable feature of this construction is that the role of symbols is analogous to that in the theory of noncommutative Toeplitz operators; see \cite{popescu} and \cite[Remark 2.4]{mansi2025}. 

The structure of $W_L$ was investigated under additional assumptions such as isometry, unitarity, and Nica-covariance in \cite{mansi2025}.
Let $(W, S^\cle)$ be an isometric Fock representation of $\clo_n$. We say that $(W, S^\cle)$ is a Nica-covariant representation of $\clo_n$ if 
$$W^*(S_1 \otimes I_\cle) = (S_n \otimes I_\cle)W^*.$$
For a symbol $L$, we define the subspace $\cle_L \subseteq \clf_n^2 \otimes \cle$ as
\begin{equation} \label{eq:EL}
\cle_{L} = \ol{\text{span}}\{e_{1}^{ m}\otimes\eta : m\in\Z_{+},\eta\in\cle\} \ominus \ol{\text{span}}\{e_{1}^{ p}\otimes L\zeta : p\geq 1,\zeta\in\cle\}.
\end{equation}
The following structural characterizations were obtained in \cite[Theorem 4.4]{mansi2025}. For convenience, we recall the relevant result below.
\begin{theorem}\label{thm:fock_classification}
Let $\cle$ be a Hilbert space, and let $L \in \clb(\cle, \clf_n^2 \otimes \cle)$. Then
\begin{enumerate}
    \item $(W_L, S^\cle)$ is an isometric representation if and only if $L$ is an isometry and $L\cle \subseteq \cle_L$.
    \item An isometric Fock representation \((W_L,S^\cle)\) is unitary if and only if
\(L\cle=\Omega\otimes\cle\).
    \item For an isometric symbol $L$, the map $W_L$ is unitary if and only if $L\cle = \Omega \otimes \cle$.
\end{enumerate}
\end{theorem}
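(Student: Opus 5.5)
The plan is a direct computation with the orthonormal basis $\{e_\mu\otimes\eta\}$. The idea is to split $\clf_n^2\otimes\cle$ into the part on which $W_L$ is a fixed unitary, which does not depend on $L$, and the part on which $L$ enters. Set
\[
\clk=\ol{\mathrm{span}}\{e_1^m\otimes\eta: m\in\Z_+,\eta\in\cle\},\qquad
\cln_0=\ol{\mathrm{span}}\{e_n^m\otimes\eta: m\in\Z_+,\eta\in\cle\}.
\]

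\textbf{Step 1 (the $L$-independent part).} A word $\mu$ that is not of the form $n^m$ can be written as $\mu=n^k j\nu'$ with $j\neq n$. By \eqref{eq:defn-W_L}, $W_L(e_\mu\otimes\eta)=e_1^k\otimes e_{j+1}\otimes e_{\nu'}\otimes\eta$, and here $j+1\ge 2$. This rule is a bijection from the words not of the form $n^m$ onto the words not of the form $1^m$; its inverse subtracts one from the first letter different from $1$ and replaces each preceding $1$ by $n$. Hence $W_L$ maps $\cln_0^\perp$ unitarily onto $\clk^\perp$, independently of $L$. On $\cln_0$ we have $W_L(e_n^m\otimes\eta)=e_1^m\otimes L\eta=(S_1^m\otimes I_\cle)L\eta$.

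\textbf{Step 2 (proof of (1)).} By Step 1, $W_L$ is an isometry if and only if three conditions hold:
\begin{enumerate}
\item[(a)] each map $\eta\mapsto e_1^m\otimes L\eta$ is isometric, which is equivalent to $L$ being an isometry, since $S_1^m\otimes I_\cle$ is an isometry;
\item[(b)] the images of different levels are orthogonal, that is, $\langle e_1^m\otimes L\eta, e_1^{m+p}\otimes L\zeta\rangle=\langle L\eta, e_1^p\otimes L\zeta\rangle=0$ for all $p\ge1$;
\item[(c)] $W_L(\cln_0)\perp\clk^\perp$, that is, $e_1^m\otimes L\eta\in\clk$ for all $m$.
\end{enumerate}
Since $(S_1\otimes I_\cle)\clk\subseteq\clk$, condition (c) is equivalent to $L\cle\subseteq\clk$. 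Conditions (b) and (c) together say exactly that $L\cle\subseteq\cle_L$, which gives (1). The only delicate point is that all cross inner products must be accounted for; Step 1 makes them vanish except those within $\cln_0$.

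\textbf{Step 3 (proof of (2)).} Assume $W_L$ is an isometry. By Step 1, $\ran W_L=\clk^\perp\oplus\ol{\mathrm{span}}\{e_1^m\otimes L\eta: m\ge 0\}$, so $W_L$ is unitary if and only if $\clk=\ol{\mathrm{span}}\{e_1^m\otimes L\eta : m\ge0\}$.

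If $L\cle=\Omega\otimes\cle$, this equality is immediate.

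For the converse, this is the step I expect to be the main obstacle, since it must recover $L\cle$ exactly. Surjectivity gives
\[
\cle_L=\clk\ominus\ol{\mathrm{span}}\{e_1^p\otimes L\zeta: p\ge1\}=L\cle,
\]
where $L\cle$ is closed because $L$ is an isometry. First, $\Omega\otimes\cle\subseteq\clk$ and $\Omega\otimes\cle$ is orthogonal to every $e_1^p\otimes(\cdot)$ with $p\ge1$, so $\Omega\otimes\cle\subseteq\cle_L=L\cle$. Next, let $x\in L\cle$ with $x\perp\Omega\otimes\cle$. Since $x\in\clk$, we can write $x=e_1\otimes y$ with $y\in\clk=\ol{\mathrm{span}}\{e_1^m\otimes L\eta: m\ge0\}$. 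Then $x\in\ol{\mathrm{span}}\{e_1^{p}\otimes L\zeta: p\ge1\}$, which is orthogonal to $\cle_L\ni x$, so $x=0$. Hence $L\cle=\Omega\otimes\cle$.

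\textbf{Step 4 (proof of (3)).} Let $L$ be an isometry. If $W_L$ is unitary, then it is an isometry, and (2) gives $L\cle=\Omega\otimes\cle$. Conversely, if $L\cle=\Omega\otimes\cle$, then $L\cle\subseteq\clk$ and $L\cle\perp e_1^p\otimes L\cle$ for all $p\ge1$, so $L\cle\subseteq\cle_L$. By (1), $W_L$ is an isometry, and (2) then shows it is unitary.
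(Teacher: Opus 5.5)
Your proof is correct, and it takes a more elementary route than the paper. The paper does not prove this theorem itself; it quotes it from \cite{mansi2025}.

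\textbf{What the paper does.} Part (2) is re-derived in Corollary~\ref{cor:unitary_WL}. That argument starts from the general adjoint formula (Theorem~\ref{thm:general_adjoint}) to get $\ker W_L^*=\cle_L\ominus L\cle$ (Proposition~\ref{thm:kernel_adjoint}). Unitarity then means $\cle_L=L\cle$, which gives $\clm^\perp=\bigoplus_{p\ge0}(S_1^p\otimes I_\cle)L\cle$. The conclusion $L\cle=\Omega\otimes\cle$ follows from uniqueness of the wandering subspace of the shift $(S_1\otimes I_\cle)|_{\clm^\perp}$. The result closest to your part (1) is Theorem~\ref{thm:operator_characterization_symbol}(1). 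It uses the same splitting as your Step~1: your $\clk$ and $\cln_0$ are the paper's $\clm^\perp$ and $\cln^\perp$ (note the paper uses $\cln_0$ for a set of words). It forces $W_{12}=0$ by exactly your condition (c), and then states isometry of $W_{22}$ as innerness of $\Theta$.

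\textbf{Where you differ.} You express isometry of $W_{22}$ directly in terms of $L$ through (a) and (b). For (2) you work with $\ran W_L=\clk^\perp\oplus\ol{\mathrm{span}}\{e_1^m\otimes L\eta\}$ rather than $\ker W_L^*$. You also prove the wandering-subspace uniqueness by hand; in effect you show $L\cle=\clk\ominus(S_1\otimes I_\cle)\clk=\Omega\otimes\cle$.

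\textbf{What each route buys.} Yours is self-contained and needs neither the adjoint formula nor the Wold decomposition. The paper's route costs more, but it identifies $\ker W_L^*$ with the defect space $\cle_L\ominus L\cle$. The paper reuses that identification for multiplicities, the Fredholm index and the Coburn-type theorem, so its unitarity criterion is the zero-defect case of a more informative statement.

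\textbf{Two points to state explicitly.} In Step~2, the cross terms between $W_L(\cln_0^\perp)=\clk^\perp$ and $W_L(\cln_0)$ vanish because of (c), not because of Step~1; (c) is necessary since an isometry preserves inner products. In Step~3, the equality $\cle_L=L\cle$ relies on $L\cle$ being closed together with (b), so that $\clk=L\cle\oplus\ol{\mathrm{span}}\{e_1^p\otimes L\zeta:p\ge1\}$.
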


A central role in the sequel will be played by an upper triangular block decomposition of the odometer map $W_L$.
We shall use compactness in the analysis of the off-diagonal block. An operator $K\in\clb(\clh)$ is compact if, whenever $x_n\to 0$ weakly in $\clh$, one has $\|Kx_n\|\to 0.$
The ideal of compact operators on $\clh$ is denoted by $\clk(\clh)$. We then apply the preceding block decomposition of $W_L$ to investigate factorization properties of odometer maps using Douglas' range inclusion theorem (cf. \cite{Douglas1966}). 
\begin{theorem}[Douglas' lemma]\label{thm:douglas_lemma}
Let \(A\in \clb(\clh_1,\clh_3)\) and
\(B\in \clb(\clh_2,\clh_3)\). Then the following assertions are equivalent:
\begin{enumerate}
    \item \(\ran A\subseteq \ran B\);
    \item there exists \(C\in \clb(\clh_1,\clh_2)\) such that
    \[
    A=BC;
    \]
    \item there exists \(\lambda>0\) such that
    \[
    AA^*\leq \lambda BB^*.
    \]
\end{enumerate}
\end{theorem}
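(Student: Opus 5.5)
The plan is the classical Douglas argument via the Moore--Penrose/reduced-inverse construction, arranged as the cycle (2)$\Rightarrow$(1)$\Rightarrow$(3)$\Rightarrow$(2).

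\emph{Step (2)$\Rightarrow$(1) and (2)$\Rightarrow$(3).} If $A=BC$, then $\ran A=B(\ran C)\subseteq \ran B$. Moreover,
\[
AA^*=BCC^*B^*\le \|C\|^2BB^*,
\]
because $CC^*\le\|C\|^2 I$. So (3) holds with $\lambda=\|C\|^2$.

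\emph{Step (3)$\Rightarrow$(2).} The inequality gives $\|A^*x\|^2\le\lambda\|B^*x\|^2$ for all $x\in\clh_3$. We first define $D_0:\ran B^*\to\ran A^*$ by $D_0(B^*x)=A^*x$. This is well defined and bounded with $\|D_0\|\le\sqrt\lambda$, since $B^*x=B^*y$ forces $A^*(x-y)=0$. Next, extend $D_0$ by continuity to $\ol{\ran B^*}$ and by zero on $(\ran B^*)^\perp=\ker B$, obtaining $D\in\clb(\clh_2,\clh_1)$ with $DB^*=A^*$. Finally, set $C=D^*$; then $A=BC$.

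\emph{Step (1)$\Rightarrow$(2).} This is the main obstacle, since a boundedness argument is needed. Restrict $B$ to $(\ker B)^\perp$, where it is injective. For each $h\in\clh_1$ there is then a unique $Ch\in(\ker B)^\perp$ with $BCh=Ah$, and $C$ is linear. To show $C$ is bounded I use the closed graph theorem. Suppose $h_k\to h$ and $Ch_k\to g$. Since $(\ker B)^\perp$ is closed, $g\in(\ker B)^\perp$. Also $BCh_k=Ah_k\to Ah$ and $BCh_k\to Bg$, so $Bg=Ah=BCh$. Injectivity of $B$ on $(\ker B)^\perp$ then gives $g=Ch$. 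Hence $C$ is closed, therefore bounded, and $A=BC$.

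Together these steps establish the equivalence of all three conditions.
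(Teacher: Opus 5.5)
Your proof is correct and is essentially Douglas's original argument from the 1966 reference; the paper itself states this lemma without proof and relies on that citation. As in Douglas, (1)$\Rightarrow$(2) comes from the closed graph theorem applied to the solution operator into $(\ker B)^\perp$, and (3)$\Rightarrow$(2) comes from extending the bounded map $B^*x\mapsto A^*x$ from $\ran B^*$ to $\clh_2$ and taking adjoints. Two cosmetic points: the implications you actually prove are (2)$\Rightarrow$(1), (2)$\Rightarrow$(3), (3)$\Rightarrow$(2) and (1)$\Rightarrow$(2) rather than the announced cycle, which still suffices; and since (3) requires $\lambda>0$, take $\lambda=\max\{\|C\|^2,1\}$ to cover the case $C=0$.
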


We also compute the adjoint of the odometer map. 
If \(A\in\clb(\clh_1,\clh_2)\), then \(A^*\in
\clb(\clh_2,\clh_1)\) is determined by
\[
\langle Ax,y\rangle_{\clh_2}
=
\langle x,A^*y\rangle_{\clh_1},
\qquad x\in\clh_1,\ y\in\clh_2.
\]
We then use the explicit formula for $W_L^*$ to derive several
operator-theoretic consequences of the odometer map. The first consequence is a description of the kernel of $W_L^*$. Recall that, for a bounded operator $T\in\clb(\clh)$, we have $(\ran T)^\perp=\ker T^*.$

\begin{theorem}[von Neumann--Wold decomposition]
Let $V\in \clb(\clh)$ be an isometry. Then there exist orthogonal reducing subspaces $\clh=\clh_s\oplus \clh_u$,
where
\[
\clh_s=\bigoplus_{k=0}^{\infty}V^k(\ker V^*)
\quad\text{and}\quad
\clh_u=\bigcap_{k=0}^{\infty}V^k\clh,
\]
such that
\begin{enumerate}
    \item $\restr{V}{\clh_s}$ is a pure isometry;
    \item $\restr{V}{\clh_u}$ is unitary.
\end{enumerate}
\end{theorem}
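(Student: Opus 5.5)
The statement to prove is the classical von Neumann--Wold decomposition for an isometry $V\in\clb(\clh)$. I would set $\cll$... rather, I would set the wandering subspace $\mathcal{W}=\ker V^*=\clh\ominus V\clh$, which is closed because $V\clh$ is closed ($V$ is an isometry, so its range is closed). The first step is to show that the subspaces $V^k\mathcal{W}$, $k\ge 0$, are mutually orthogonal. For $j<k$ and $x,y\in\mathcal{W}$ we have $\langle V^jx,V^ky\rangle=\langle x,V^{k-j}y\rangle$, using $V^{*j}V^j=I$, and this vanishes since $V^{k-j}y\in V\clh\perp\mathcal{W}$. Hence $\clh_s=\bigoplus_{k\ge0}V^k\mathcal{W}$ is a well-defined closed subspace. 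Moreover, each $V^k$ restricted to $\mathcal{W}$ is isometric, so $\clh_s$ is unitarily equivalent to $\ell^2(\Z_+)\otimes\mathcal{W}\cong H^2_{\mathcal{W}}(\D)$, with $V$ acting as the shift.

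Next I would establish the telescoping identity
\[
\clh=\mathcal{W}\oplus V\mathcal{W}\oplus\cdots\oplus V^{m-1}\mathcal{W}\oplus V^m\clh
\]
for every $m\ge1$, by induction. The case $m=1$ is the definition of $\mathcal{W}$. For the inductive step, apply the isometry $V^{m}$ to $\clh=\mathcal{W}\oplus V\clh$; an isometry preserves orthogonal sums onto its range, so $V^m\clh=V^m\mathcal{W}\oplus V^{m+1}\clh$. Taking orthogonal complements, $(\bigoplus_{k<m}V^k\mathcal{W})^\perp=V^m\clh$. Letting $m\to\infty$ then gives $\clh_s^\perp=\bigcap_m (\bigoplus_{k<m}V^k\mathcal{W})^\perp=\bigcap_m V^m\clh=\clh_u$, so $\clh=\clh_s\oplus\clh_u$. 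I expect this identification of $\clh_s^\perp$ with $\clh_u$ to be the main point requiring care, mostly in justifying the decreasing-intersection step.

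For the reducing property, it is clear that $V\clh_s\subseteq\clh_s$ and $V\clh_u=\bigcap_m V^{m+1}\clh\subseteq\clh_u$. Here I use injectivity of $V$ to pass $V$ through the intersection: if $x\in\bigcap_m V^m\clh$, then $Vx\in\bigcap_m V^{m+1}\clh$. Since each of the two subspaces is invariant and they are orthogonal complements of each other, both are reducing.

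It remains to identify the restrictions. On $\clh_u$, if $x\in\clh_u$ then $x=Vy$ with $y=V^*x$. For each $m$, writing $x=V^{m+1}z$ gives $y=V^*V^{m+1}z=V^mz$, so $y\in\clh_u$. Hence $V|_{\clh_u}$ is surjective and therefore unitary. On $\clh_s$, for a finite sum $x=\sum_k V^kx_k$ with $x_k\in\mathcal{W}$, we have $V^{*m}x=\sum_{k\ge m}V^{k-m}x_k$, using $V^*\mathcal{W}=0$. Thus $\|V^{*m}x\|^2=\sum_{k\ge m}\|x_k\|^2\to0$, and since $\|V^{*m}\|\le1$ this extends by density to all of $\clh_s$. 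So $V|_{\clh_s}$ is pure.
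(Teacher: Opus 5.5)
Your proof is correct and complete. The orthogonality of the $V^k\mathcal{W}$, the telescoping decomposition $\clh=\bigl(\bigoplus_{k<m}V^k\mathcal{W}\bigr)\oplus V^m\clh$, the identification $\clh_s^\perp=\bigcap_m V^m\clh$, and the surjectivity and purity checks together make up the standard textbook argument.

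The paper gives no proof of this statement. It only recalls the von Neumann--Wold theorem as classical background, so there is no alternative route to compare against.

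Two cosmetic points:
\begin{enumerate}
\item The inclusion $V\clh_u\subseteq\clh_u$ does not need injectivity of $V$. Injectivity is only needed for the equality $V\clh_u=\bigcap_m V^{m+1}\clh$, which you do not actually use, and which your later surjectivity argument gives anyway.
\item In the purity step, say explicitly that $(\restr{V}{\clh_s})^*=\restr{V^*}{\clh_s}$, since $\clh_s$ reduces $V$.
\end{enumerate}
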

The space \(\ker V^*\) is the wandering subspace for the shift part of \(V\).
We define the multiplicity of the unilateral shift part in the Wold
decomposition of \(V\) by
\[
\operatorname{mult}(V):=\dim\ker V^*.
\]

We shall also use the following standard Fredholm terminology
(cf. \cite{atkinson1948}). An operator \(T\in\clb(\clh)\) is called
\emph{upper semi-Fredholm} if $\dim\ker T<\infty$ and $\ran T \text{ is closed}.$
It is called \emph{lower semi-Fredholm} if $\dim\ker T^*<\infty$ and $\ran T \text{ is closed}.$
The operator \(T\) is called \emph{semi-Fredholm} if it is either upper
semi-Fredholm or lower semi-Fredholm.

An operator \(T\in\clb(\clh)\) is called \emph{Fredholm} if
\[
\dim\ker T<\infty,\qquad
\ran T \text{ is closed},
\qquad
\dim\ker T^*<\infty.
\]
In this case, the Fredholm index of \(T\) is defined by
\[
\ind(T)=\dim\ker T-\dim\ker T^*.
\]
We shall use the fact that the Fredholm index is locally constant on the
semi-Fredholm set.

We shall also recall the notion of essential normality and the essential spectrum. An operator $T\in\clb(\clh)$ is said to be essentially normal if its self-commutator is compact, that is,
\[
T^*T-TT^*\in \clk(\clh).
\]
For an operator $T\in\clb(\clh)$, the essential spectrum is defined by
\[
\sigma_{\mathrm{ess}}(T)
=
\{\lambda\in\mathbb C: T-\lambda I \text{ is not Fredholm}\}.
\]
Equivalently, $\lambda\notin\sigma_{\mathrm{ess}}(T)$ precisely when $T-\lambda I$ is Fredholm. We shall combine this with the local constancy of the Fredholm index on the semi-Fredholm set to obtain a Coburn-type spectral result for odometer maps.

Finally, we shall use the following elementary characterization of hyponormality. An operator $T\in\clb(\clh)$ is called \emph{hyponormal} if $T^*T\geq TT^*,$ or equivalently,
\[
\|T^*x\|\leq \|Tx\|,
\qquad x\in\clh.
\]
An operator $T\in\clb(\clh)$ is called \emph{subnormal} if there exist a Hilbert space $\clk\supseteq\clh$ and a normal operator $N\in\clb(\clk)$ such that $\clh$ is invariant for $N$ and
\[
T=\restr{N}{\clh}.
\]
Every subnormal operator is hyponormal. Therefore, any obstruction to hyponormality is also an obstruction to subnormality.

\section{Block decomposition and Hardy space realization}\label{sec:BDHSR}

Throughout the paper, $\cle$ denotes a Hilbert space.  For $L \in \clb(\cle, \clf_n^2 \otimes \cle)$, let $W_L \in \clb( \clf_n^2 \otimes \cle)$ denote the corresponding odometer map as defined by \eqref{eq:defn-W_L}. Let $\{e_\mu : \mu \in F_n^+\}$ be the canonical orthonormal basis of $\clf_n^2$. 
Let 
\[
\begin{aligned}
\clm_0
&=
\{\mu\in F_n^+:\ |\mu|\geq 1
\text{ and } \mu_i\neq 1 \text{ for some } i\},\\
\cln_0
&=
\{\mu\in F_n^+:\ |\mu|\geq 1
\text{ and } \mu_i\neq n \text{ for some } i\}.
\end{aligned}
\]
We define
\begin{equation}\label{eq:Clm-Cln}
\clm= \ol{\rm span}\{ e_{\mu} \otimes \xi :\mu \in \clm_0,\, \xi \in \cle  \}, \quad
\cln=\ol{\rm span}\{ e_{\mu} \otimes \xi : \mu \in \cln_0,\, \xi \in \cle\}.
\end{equation}
It follows immediately that
\begin{equation}\label{eq:Clm-Cln-Perp}
\clm^{\perp}=\ol{\rm span}\{ e_{1}^{m} \otimes \xi: m \geq 0,\,  \xi \in \cle \},\quad \cln^{\perp}
=\ol{\rm span}\{ e_{n}^{m} \otimes \xi:  m \geq 0,\, \xi \in \cle \},
\end{equation}
and hence we obtain the orthogonal decompositions
\[
\clf_n^2\otimes\cle=\clm\oplus\clm^\perp,
\qquad
\clf_n^2\otimes\cle=\cln\oplus\cln^\perp.
\]
Let \(\gamma=\gamma_1\cdots\gamma_m\in\clm_0\). Let \(k\) be the smallest index such that \(\gamma_k\neq 1\), and set
\[
e_{\gamma-1}
=
e_n^{k-1}\otimes e_{\gamma_k-1}\otimes e_{\gamma_{k+1}}\otimes\cdots\otimes e_{\gamma_m}.
\]
For \(\mu=\mu_1\cdots\mu_m\in\cln_0\), let \(k\) be the smallest index such that \(\mu_k\neq n\), and set
\begin{equation}\label{eq: e_mu+1}
e_{\mu+1}
=
e_1^{k-1}\otimes e_{\mu_k+1}\otimes e_{\mu_{k+1}}\otimes \cdots \otimes e_{\mu_m}.
\end{equation}
Equivalently, for every \(\xi\in\cle\),
$W_L(e_\mu\otimes \xi)=e_{\mu+1}\otimes \xi.$
Note that the odometer map satisfies $W_L(\cln)=\clm.$
Let
\[
W_{11}:=\restr{W_L}{\cln}, \quad W_{12}:= \restr{P_{\clm} W_L}{\cln^{\perp}}, \quad W_{22}:= \restr{P_{\clm^{\perp}} W_L}{\cln^{\perp}}.
\]
Then, with respect to the orthogonal decompositions $\clf_n^2 \otimes \cle = \cln \oplus \cln^\perp$ and $\clf_n^2 \otimes \cle = \clm \oplus \clm^\perp$, the operator $W_L : \cln \oplus \cln^\perp \to \clm \oplus \clm^\perp$
admits the following block matrix representation:
\begin{equation}\label{eq:matrix representation}
 W_L=
\begin{pmatrix}
W_{11} & W_{12} \\
0 & W_{22}
\end{pmatrix}.   
\end{equation}
This block decomposition of $W_L$ provides a convenient framework for the analysis of its geometric and operator-theoretic properties.
Much of the analysis in the subsequent sections will rely on this decomposition and the interaction between the blocks $W_{11}$, $W_{12}$, and $W_{22}$.

Observe that the restriction \(W_{11}:\cln \to \clm\) is unitary. Since
\(\cln\) and \(\clm\) are infinite-dimensional whenever \(n\geq 2\) and
\(\cle\neq \{0\}\), it follows that \(W_L\) cannot be compact for \(n\geq 2\).
Thus, the only remaining case in which compactness can occur is \(n=1\).
The following proposition gives equivalent criteria for compactness in this case.

\begin{proposition}\label{prop:WL_compact}
Assume that \(\cle \neq \{0\}\) and \(n=1\). Then an odometer map
\(W_L \in \clb(\clf_1^2 \otimes \cle)\) is compact if and only if
\(W_L=0\).
\end{proposition}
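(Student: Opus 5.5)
The plan is to use the fact that for $n=1$ the odometer map commutes with the creation operator, and then to combine compactness with the fact that the shift is a pure isometry. The converse direction is trivial, since the zero operator is compact. So the substance is to show that a compact $W_L$ must vanish.

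\emph{Step 1: the intertwining relation.} When $n=1$ every letter of a word equals $n$, so only the last case of \eqref{eq:defn-W_L} applies. Hence
\[
W_L(e_1^{m}\otimes\eta)=e_1^{m}\otimes L\eta=(S_1^m\otimes I_\cle)L\eta,\qquad m\ge 0,\ \eta\in\cle.
\]
Write $S=S_1\otimes I_\cle$. On the spanning vectors $e_1^m\otimes\eta$ we get $W_L S(e_1^m\otimes\eta)=S^{m+1}L\eta=SW_L(e_1^m\otimes\eta)$. By boundedness and linearity, $W_LS=SW_L$; this is just the defining relation $WT_n=T_1W$ of a Fock representation for $n=1$. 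Iterating gives $W_LS^k=S^kW_L$ for all $k\ge 0$.

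\emph{Step 2: weak convergence of the shift.} Since $S$ is a pure isometry on $\clf_1^2\otimes\cle$, we have $S^{*k}\to 0$ in SOT. Consequently, for any $x,y$,
\[
\langle S^kx,y\rangle=\langle x,S^{*k}y\rangle\to 0,
\]
so $S^kx\to 0$ weakly for every fixed $x$.

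\emph{Step 3: conclusion.} Suppose $W_L$ is compact and fix $x\in\clf_1^2\otimes\cle$. By the characterization of compactness recalled in Section~\ref{sec:Preliminaries}, $\|W_LS^kx\|\to 0$. On the other hand, by Step 1 and the isometric property of $S$,
\[
\|W_LS^kx\|=\|S^kW_Lx\|=\|W_Lx\|\quad\text{for every }k.
\]
Hence $W_Lx=0$ for all $x$, that is, $W_L=0$.

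I do not expect a genuine obstacle here. The only point needing care is Step 1, namely checking that for $n=1$ the carry rule reduces exactly to $W_L(e_1^m\otimes\eta)=e_1^m\otimes L\eta$, including $m=0$, so that the commutation $W_LS=SW_L$ holds on a total set.
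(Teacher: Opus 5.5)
Your proof is correct and is essentially the paper's argument. The paper uses the weakly null orthogonal sequence \(e_1^p\otimes\eta=(S_1^p\otimes I_\cle)(\Omega\otimes\eta)\) and the identity \(\|W_L(e_1^p\otimes\eta)\|=\|L\eta\|\) to force \(L=0\), which is your Step 3 specialized to \(x=\Omega\otimes\eta\); your intertwining \(W_LS=SW_L\) runs the same argument for arbitrary \(x\) and gives \(W_L=0\) directly rather than through \(L=0\).
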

\begin{proof}
If \(W_L=0\), then \(W_L\) is trivially compact.
Conversely, assume that \(W_L\) is compact. Since \(n=1\), we have
\(\cln=\clm=\{0\}\).
Thus $\clf_1^2\otimes\cle=\cln^\perp.$
Suppose \(L\neq0\). Then there exists
\(\eta\in\cle\) such that \(L\eta\neq0\). For \(p\ge0\), set
\[
y_p=e_1^p\otimes\eta\in \clf_1^2\otimes\cle.
\]
The vectors \(\{y_p\}_{p\ge0}\) are mutually orthogonal, and hence
\(y_p\to0\) weakly as \(p\to\infty\). Since \(W_L\) is compact, it follows
that $\|W_Ly_p\|\to0.$
On the other hand, by the definition of the odometer map for \(n=1\), $W_Ly_p
=
(S_1^p\otimes I_\cle)L\eta.$
Since \(S_1^p\otimes I_\cle\) is an isometry, we obtain
\[
\|W_Ly_p\|
=
\|L\eta\|
>0,
\qquad p\ge0.
\]
This contradicts \(\|W_Ly_p\|\to0\). 
Therefore $L=0$, and hence $W_L=0.$
\end{proof}

Since $W_{11}$ is unitary, the non-unitary features of \(W_L\) are governed by the remaining blocks
\(W_{12}\) and \(W_{22}\).
\begin{lemma} \label{lem:W12W22}
Let $W_L\in \clb(\clf_n^2\otimes \cle)$ be an odometer map with block decomposition \eqref{eq:matrix representation}. Then
\[
W_{12}=\restr{W_{(P_{\clm}L)}}{\cln^\perp},
\qquad
W_{22}=\restr{W_{(P_{\clm^\perp}L)}}{\cln^\perp}.
\]
\end{lemma}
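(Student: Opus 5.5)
Both operators $W_{12}$ and $W_{22}$ are bounded and linear on $\cln^\perp$, and so are the restrictions on the right-hand side. By \eqref{eq:Clm-Cln-Perp}, the vectors $e_n^m\otimes\xi$ with $m\ge 0$ and $\xi\in\cle$ span a dense subspace of $\cln^\perp$. It therefore suffices to check the two identities on such vectors. Note first that $P_{\clm}L$ and $P_{\clm^\perp}L$ both lie in $\clb(\cle,\clf_n^2\otimes\cle)$, so $W_{(P_{\clm}L)}$ and $W_{(P_{\clm^\perp}L)}$ are well-defined odometer maps by \cite[Theorem 2.2]{mansi2025}.

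For a fixed symbol $K\in\clb(\cle,\clf_n^2\otimes\cle)$, formula \eqref{eq:defn-W_L} gives $W_K(\Omega\otimes\xi)=K\xi$ and, in the last case of the carry rule (all letters equal to $n$),
\[
W_K(e_n^m\otimes\xi)=e_1^m\otimes K\xi=(S_1^m\otimes I_\cle)K\xi,\qquad m\ge 1.
\]
Thus $W_K(e_n^m\otimes\xi)=(S_1^m\otimes I_\cle)K\xi$ for all $m\ge 0$, and this expression is linear in $K$. Applying it with $K=L=P_{\clm}L+P_{\clm^\perp}L$ reduces both claims to the commutation relations
\[
P_{\clm}(S_1^m\otimes I_\cle)=(S_1^m\otimes I_\cle)P_{\clm},\qquad P_{\clm^\perp}(S_1^m\otimes I_\cle)=(S_1^m\otimes I_\cle)P_{\clm^\perp}.
\]

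The key step is to show that $\clm$ and $\clm^\perp$ are both invariant under $S_1\otimes I_\cle$, so that $\clm$ reduces $S_1\otimes I_\cle$. On basis vectors this is immediate. If $\mu\in\clm_0$, then $1\mu$ still contains a letter different from $1$, so $1\mu\in\clm_0$. Also $e_1\otimes e_1^m=e_1^{m+1}$, so $S_1\otimes I_\cle$ maps the spanning set of $\clm^\perp$ into itself. By continuity, both closed subspaces are invariant. Hence $S_1^m\otimes I_\cle$ commutes with $P_{\clm}$ and with $P_{\clm^\perp}$.

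It follows that
\[
P_{\clm}W_L(e_n^m\otimes\xi)=(S_1^m\otimes I_\cle)P_{\clm}L\xi=W_{(P_{\clm}L)}(e_n^m\otimes\xi),
\]
and similarly with $\clm^\perp$ in place of $\clm$. Therefore $W_{12}$ and $W_{22}$ agree with the stated restrictions on a dense subset of $\cln^\perp$, and so they agree on all of $\cln^\perp$. No step is a real obstacle. The only point requiring care is the reducing property of $\clm$ under $S_1\otimes I_\cle$, which is what allows $P_{\clm}$ to pass through $e_1^m\otimes(\cdot)$.
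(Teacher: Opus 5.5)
Your proof is correct and follows the same route as the paper: verify both identities on the dense spanning set $\{e_n^p\otimes\eta\}$ of $\cln^\perp$ using $W_L(e_n^p\otimes\eta)=e_1^p\otimes L\eta$. The paper uses the step $P_{\clm}(e_1^p\otimes L\eta)=e_1^p\otimes P_{\clm}L\eta$ without comment, and your observation that $\clm$ reduces $S_1\otimes I_\cle$ supplies the justification for it.
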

\begin{proof}
It suffices to verify the identities on the dense spanning set $\{e_n^p\otimes \eta : p\ge 0,\ \eta\in\cle\}$
of $\cln^\perp$. Let $p\ge 0$ and $\eta\in\cle$. By definition of the odometer map, $W_L(e_n^p\otimes \eta)=e_1^p\otimes L\eta.$
Hence
\[
P_{\clm}W_L(e_n^p\otimes \eta)
=
P_{\clm}(e_1^p\otimes L\eta)
=
e_1^p\otimes P_{\clm}L\eta
=
W_{(P_{\clm}L)}(e_n^p\otimes \eta).
\]
Therefore, $W_{12}=\restr{W_{(P_{\clm}L)}}{\cln^\perp}.$
A similar computation yields $W_{22}=\restr{W_{(P_{\clm^\perp}L)}}{\cln^\perp}.$
\end{proof}

We next investigate the compactness properties of the off-diagonal block $W_{12}$. The following proposition shows that compactness of $W_{12}$ is highly restrictive and, in fact, forces $W_{12}$ to vanish identically.

\begin{proposition}\label{prop:W12_compactness}
Consider the off-diagonal block $W_{12}:\cln^\perp \to \clm$ appearing in the block decomposition \eqref{eq:matrix representation}. Then
\[
W_{12} \text{ is compact}
\quad \Longleftrightarrow \quad
W_{12}=0.
\]
\end{proposition}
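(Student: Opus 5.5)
The argument mirrors the compactness proof of Proposition~\ref{prop:WL_compact}, using the description of $W_{12}$ from Lemma~\ref{lem:W12W22}. One direction is trivial: if $W_{12}=0$, it is compact. For the converse, suppose $W_{12}$ is compact but nonzero, and derive a contradiction from a weakly null orthonormal-type sequence whose images do not tend to zero in norm.

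First we locate where $W_{12}$ can be nonzero. By Lemma~\ref{lem:W12W22}, for $p\ge 0$ and $\eta\in\cle$,
\[
W_{12}(e_n^p\otimes\eta)=e_1^p\otimes P_{\clm}L\eta .
\]
Since $\{e_n^p\otimes\eta\}$ spans a dense subspace of $\cln^\perp$, the assumption $W_{12}\neq 0$ forces $P_{\clm}L\eta\neq 0$ for some $\eta\in\cle$. Fix such an $\eta$.

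Next we build the test sequence. Set $y_p=e_n^p\otimes\eta\in\cln^\perp$ for $p\ge0$. These vectors are mutually orthogonal and all have norm $\|\eta\|$, so $y_p\to 0$ weakly. On the other hand,
\[
W_{12}y_p=e_1^p\otimes P_{\clm}L\eta=(S_1^p\otimes I_\cle)P_{\clm}L\eta .
\]
Here we use that $e_1^p\otimes(\cdot)$ applied to a vector of $\clm$ stays in $\clm$, because prefixing $1$'s to a word containing a letter $\neq1$ still gives a word in $\clm_0$. This is consistent with the formula in Lemma~\ref{lem:W12W22}. Since $S_1^p\otimes I_\cle$ is an isometry,
\[
\|W_{12}y_p\|=\|P_{\clm}L\eta\|>0 \qquad \text{for all } p .
\]
This contradicts compactness, because a compact operator maps weakly null sequences to norm null sequences. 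Hence $P_{\clm}L=0$, and therefore $W_{12}=0$.

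The only point needing care is the identity $W_{(P_\clm L)}(e_n^p\otimes\eta)=e_1^p\otimes P_\clm L\eta$, that is, that the map $x\mapsto e_1^p\otimes x$ commutes with $P_\clm$. It does, since left multiplication by $e_1^p$ maps $\clm$ into $\clm$ and $\clm^\perp$ into $\clm^\perp$, by \eqref{eq:Clm-Cln-Perp}. Everything else is routine.
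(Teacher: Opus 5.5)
Your proof is correct and is essentially the paper's argument: test $W_{12}$ on the weakly null orthogonal sequence $y_p=e_n^p\otimes\eta$, use that $S_1^p\otimes I_\cle$ is an isometry to get $\|W_{12}y_p\|=\|P_{\clm}L\eta\|$ for all $p$, and conclude $P_{\clm}L=0$, hence $W_{12}=0$. The only differences are cosmetic: you argue by contradiction rather than directly, you justify that $S_1\otimes I_\cle$ commutes with $P_{\clm}$ explicitly, and you do not single out $n=1$, which is harmless since $\clm=\{0\}$ then.
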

\begin{proof}
If $n=1,$ then $W_{12}=0$. Therefore let $n \geq 2$. Assume that $W_{12}$ is compact. Fix $\eta\in\cle$ and consider the sequence $y_p:=e_n^p\otimes \eta\in \cln^\perp,\, p\ge 0.$
The vectors $\{y_p\}_{p\ge 0}$ are orthogonal, and hence $y_p\to 0$ weakly as $p\to\infty$. Compactness of $W_{12}$ implies $\|W_{12}y_p\|\to 0.$ Notice that
\[
W_{12}y_p
=
W_{(P_{\clm}L)}(e_n^p\otimes \eta)
=
e_1^p\otimes P_{\clm}L\eta
=
(S_1^p\otimes I_\cle)P_{\clm}L\eta.
\]
Since $S_1^p\otimes I_\cle$ is an isometry for each $p$, we obtain
\[
\lim_{p \to \infty} \|W_{12} y_p\| =\|P_{\clm}L\eta\|= 0.
\]
As $\eta$ was arbitrary, $P_{\clm}L=0$, that is, $L\cle \subseteq \clm^\perp$.
Then clearly  $W_{12}=\restr{W_{(P_{\clm} L)}}{\cln^\perp} =0.$ The converse direction is immediate, completing the proof.
\end{proof}

\begin{corollary}
The operator $W_{12}$ is either zero or has infinite rank.
\end{corollary}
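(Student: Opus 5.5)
The corollary follows from Proposition~\ref{prop:W12_compactness} once we recall that finite-rank operators are compact. So the plan is to take a nonzero $W_{12}$ of finite rank and derive a contradiction.

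Suppose $W_{12}\neq 0$ and $\rank W_{12}<\infty$. A bounded operator with finite-dimensional range is compact: it maps bounded sets into bounded subsets of a finite-dimensional space, which are relatively compact. Hence $W_{12}$ is compact. Proposition~\ref{prop:W12_compactness} then forces $W_{12}=0$, which is a contradiction. Therefore, if $W_{12}\neq0$, it has infinite rank.

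There is also a direct argument that makes the infinite rank explicit. If $W_{12}\neq 0$, then by Lemma~\ref{lem:W12W22} there is $\eta\in\cle$ with $\xi:=P_{\clm}L\eta\neq 0$. The vectors
\[
W_{12}(e_n^p\otimes\eta)=(S_1^p\otimes I_\cle)\xi,\qquad p\ge 0,
\]
are all nonzero, since each $S_1^p\otimes I_\cle$ is an isometry. They are also linearly independent. To see this, look at the component of $\xi$ in $e_\mu\otimes\cle$ for a word $\mu$ of minimal length $\ell$ among those components that are nonzero. Then $(S_1^p\otimes I_\cle)\xi$ has a nonzero component at $e_1^p e_\mu$, of length $p+\ell$, and no components of length less than $p+\ell$. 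So each vector has a different minimal length, which forces linear independence. Either route works; the compactness route is immediate, and there is no real obstacle beyond noting that finite rank implies compact.
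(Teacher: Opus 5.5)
Your first argument is exactly the paper's proof: a finite-rank operator is compact, so Proposition~\ref{prop:W12_compactness} forces $W_{12}=0$. Your supplementary direct argument is also correct and makes the infinite rank explicit, but the paper does not need it. That argument shows the vectors $(S_1^p\otimes I_\cle)P_{\clm}L\eta$, $p\ge 0$, are linearly independent by comparing their minimal word lengths.
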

\begin{proof}
If $W_{12}$ has finite rank, then it is compact. By Proposition~\ref{prop:W12_compactness}, this forces $W_{12}=0.$
Therefore, if $W_{12}\neq 0$, it must have infinite rank.
\end{proof}

\begin{remark}
If $L\cle \subseteq \clm^\perp$ (for example, when $W_{12}$ is compact or when $W_L$ is an isometry), then $W_{12}=0$. Consequently, the decomposition \eqref{eq:matrix representation} reduces to the block-diagonal form $W_L = W_{11} \oplus W_{22}.$    
\end{remark}

We now turn to the analysis of the second diagonal block $W_{22}$. To this end, consider the canonical unitaries $U_1:\clm^\perp \to H^2_{\cle}(\D),$ and $U_n:\cln^\perp \to H^2_{\cle}(\D),$
defined by
\[
U_1(e_1^{p}\otimes \eta)=z^p\eta
=
U_n(e_n^{p}\otimes \eta),
\qquad p\ge 0,\ \eta\in\cle.
\]

\begin{theorem}\label{prop:hardy_space_multiplier}
For \(r\geq 0\), let
\(L_r\in\clb(\cle)\) be defined by $L_r:=(\langle e_1^r,\cdot\rangle\otimes I_\cle)L.$
Then
\[
U_1W_{22}U_n^*=M_\Theta
\quad \text{on } H^2_{\cle}(\D),
\]
where \(M_\Theta\) is the analytic Toeplitz operator with uniquely determined
symbol \(\Theta\in H^\infty_{\clb(\cle)}(\D)\) given by
\[
\Theta(z)=\sum_{r\geq 0}z^rL_r.
\]
\end{theorem}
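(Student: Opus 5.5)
The plan is to compute $U_1W_{22}U_n^*$ on the monomials $z^p\eta$, show that it commutes with $M_z$, and then invoke the commutant description $\{M_z\}'=\{M_\Theta\}$ recalled in the Preliminaries.

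First, fix $p\ge 0$ and $\eta\in\cle$. By Lemma~\ref{lem:W12W22} and the definition of $W_L$ on $e_n^p\otimes\eta$,
\[
W_{22}(e_n^p\otimes\eta)=e_1^p\otimes P_{\clm^\perp}L\eta .
\]
By \eqref{eq:Clm-Cln-Perp}, the projection onto $\clm^\perp$ of a vector $x\in\clf_n^2\otimes\cle$ is $\sum_{r\ge0} e_1^r\otimes(\langle e_1^r,\cdot\rangle\otimes I_\cle)x$. Hence
\[
P_{\clm^\perp}L\eta=\sum_{r\ge0}e_1^r\otimes L_r\eta,
\qquad
\sum_{r\ge0}\|L_r\eta\|^2\le\|L\eta\|^2 .
\]
Tensoring on the left with $e_1^p$ is an isometry that maps $e_1^r$ to $e_1^{p+r}$. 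Therefore
\[
U_1W_{22}U_n^*(z^p\eta)=\sum_{r\ge0}z^{p+r}L_r\eta=z^p\,\Theta(z)\eta .
\]

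Second, I check that $\Theta$ is an honest element of $H^\infty_{\clb(\cle)}(\D)$ and identify the operator. The operator $T:=U_1W_{22}U_n^*$ is bounded, since $\|T\|\le\|W_L\|$. By the first step, $T$ satisfies $TM_z=M_zT$ on the dense set of polynomials $\sum z^p\eta_p$, and hence on all of $H^2_\cle(\D)$. By the commutant theorem there is $\Psi\in H^\infty_{\clb(\cle)}(\D)$ with $T=M_\Psi$. Evaluating at constants gives $\Psi(z)\eta=(T\eta)(z)=\sum_r z^rL_r\eta$ for every $\eta$. Thus $\Psi=\Theta$ pointwise on $\D$, and $\Theta$ is bounded analytic with $\|\Theta\|_\infty=\|T\|$. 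Convergence of $\sum_r z^rL_r$ in operator norm for $|z|<1$ also follows directly from $\|L_r\|\le\|L\|$.

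Third, uniqueness of the symbol follows because $M_\Theta\eta=\Theta(\cdot)\eta$ determines every Taylor coefficient, and hence $\Theta$. The only mildly delicate point is the second step, namely justifying that the formal series defines a bounded multiplier. Invoking the commutant description, rather than estimating $\sup_z\|\Theta(z)\|$ directly, avoids that issue.
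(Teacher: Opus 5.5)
Your proposal is correct and follows essentially the same route as the paper: compute $U_1W_{22}U_n^*$ on the monomials $z^p\eta$, deduce commutation with $M_z$ by density, invoke $\{M_z\}'=\{M_\Theta\}$, and identify the symbol by applying the operator to constant functions. Your extra checks are correct details that the paper leaves implicit: the bound $\|L_r\|\le\|L\|$, which gives norm convergence of $\sum_r z^rL_r$ on $\D$, and the identity $\|\Theta\|_\infty=\|U_1W_{22}U_n^*\|$.
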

\begin{proof}
For $p\ge 0$ and $\eta\in\cle$, 
\[
U_1W_{22}U_n^*(z^p\eta)
=
U_1W_{22}(e_n^p\otimes \eta)
=
U_1(e_1^p\otimes P_{\clm^\perp}L\eta).
\]
Therefore,
\[
M_z(U_1W_{22}U_n^*)(z^p\eta)
=
U_1(e_1^{p+1}\otimes P_{\clm^\perp}L\eta)
=
U_1W_{22}(e_n^{p+1}\otimes \eta)
=
(U_1W_{22}U_n^*)M_z(z^p\eta).
\]
Since vector-valued polynomials are dense in $H^2_{\cle}(\D)$, it follows that $U_1W_{22}U_n^*\in \{M_z\}'.$
Therefore, there exists a unique $\Theta\in H^\infty_{\clb(\cle)}(\D)$
such that $$U_1W_{22}U_n^*=M_\Theta.$$

To identify the symbol $\Theta$, fix $\eta\in\cle$ and consider the constant function $f\in H^2_{\cle}(\D),$ defined by $f(z)\equiv \eta.$
As $L_r\eta$ is the coefficient of $e_1^r$ in the Fourier expansion of $L\eta$, we can write
\begin{equation} \label{eq:Ltilde in terms of Lr}
 P_{\clm^{\perp}} L\eta
=
\sum_{r\ge 0} e_1^r\otimes L_r\eta,
\qquad \eta\in\cle.  
\end{equation}   
Therefore,
\[
\Theta(z)\eta
=
(M_\Theta f)(z)
=
(U_1W_{22}U_n^*f)(z)
=
U_1(P_{\clm^{\perp}}L\eta)
=
\sum_{r\ge 0} z^rL_r\eta.
\]
Hence $\Theta(z)=\sum_{r\ge 0} z^rL_r.$
\end{proof}

\begin{definition}\label{rem:toeplitz_block}
In view of Theorem~\ref{prop:hardy_space_multiplier}, we call $W_{22}:\cln^\perp\to \clm^\perp$
the \emph{Toeplitz-type operator} associated with \(W_L\), or simply the
\emph{Toeplitz-type operator} when \(W_L\) is clear from the context.

Henceforth, we
identify $U_1W_{22}U_n^*$ with the multiplication operator \(M_\Theta\) and call
\(\Theta\in H^\infty_{\clb(\cle)}(\D)\) the \emph{analytic symbol} associated
with \(W_L\).

Let \(V:\clm^\perp\to \cln^\perp\) be the canonical unitary defined by $V(e_1^p\otimes \eta)=e_n^p\otimes \eta,
\, p\geq 0,\ \eta\in\cle.$ Then
\[
W_{22}V=U_1^*M_\Theta U_1.
\]
The operator \(W_{22}V\in \clb(\clm^\perp)\) is called the \emph{Toeplitz-type realization} of \(W_L\).
\end{definition}

Combining the preceding results, we arrive at the first fundamental result: the structural decomposition of the odometer map.

\begin{theorem}\label{thm:WL_structural_sum}
With respect to the orthogonal decompositions $\clf_n^2\otimes \cle=\cln\oplus \cln^\perp$ and $\clf_n^2\otimes \cle=\clm\oplus \clm^\perp,$
the odometer map $W_L$ admits the block decomposition
\[
W_L=
\begin{pmatrix}
W_{11} & W_{12}\\
0 & W_{22}
\end{pmatrix},
\]
where
\begin{enumerate}
    \item $W_{11}:\cln\to \clm$ is unitary;

        \item $W_{12}:\cln^\perp\to \clm$ is either zero or has infinite rank;

    \item \(W_{22}:\cln^\perp\to \clm^\perp\) is the Toeplitz-type operator
associated with \(W_L\), and its Toeplitz-type realization is unitarily
equivalent to the analytic Toeplitz operator \(M_\Theta\) on
\(H^2_{\cle}(\D)\), 
where \(\Theta\in H^\infty_{\clb(\cle)}(\D)\) is uniquely determined by \(L\):
\[
\Theta(z)=\sum_{r\geq 0}z^rL_r,
\qquad
L_r=(\langle e_1^r,\cdot\rangle\otimes I_\cle)L.
\]
\end{enumerate}
\end{theorem}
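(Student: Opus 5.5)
This theorem collects what has already been shown, so the plan is to assemble earlier results and fill in the two points that were only asserted in passing.

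\emph{The lower-left block vanishes.} By \eqref{eq:defn-W_L}, for $\mu\in\cln_0$ and $\xi\in\cle$ we have $W_L(e_\mu\otimes\xi)=e_{\mu+1}\otimes\xi$ with $e_{\mu+1}$ as in \eqref{eq: e_mu+1}. The word $\mu+1$ contains the letter $\mu_k+1\geq 2$, so $\mu+1\in\clm_0$. By linearity and continuity, $W_L(\cln)\subseteq\clm$, hence $P_{\clm^\perp}W_L|_{\cln}=0$. Together with the definitions of $W_{11},W_{12},W_{22}$, this gives the displayed upper triangular matrix.

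\emph{Item (1): $W_{11}$ is unitary.} I would show that $\mu\mapsto\mu+1$ is a bijection from $\cln_0$ onto $\clm_0$. Its inverse is the map $\gamma\mapsto\gamma-1$ defined before \eqref{eq: e_mu+1}. To check this, note that if $k$ is the first index with $\mu_k\neq n$, then $\mu+1$ begins with $1^{k-1}$ followed by $\mu_k+1\neq 1$. So the first non-$1$ letter of $\mu+1$ sits at position $k$, and subtracting gives back $\mu$. The converse composition is checked the same way. Consequently $W_{11}$ maps the orthonormal basis $\{e_\mu\otimes\xi_j:\mu\in\cln_0\}$ of $\cln$ bijectively onto the orthonormal basis $\{e_\gamma\otimes\xi_j:\gamma\in\clm_0\}$ of $\clm$, where $\{\xi_j\}$ is an orthonormal basis of $\cle$. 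Hence $W_{11}$ is a unitary from $\cln$ onto $\clm$. This basis bookkeeping, making sure the map is both injective and surjective, is the only step with real content, and it is elementary.

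\emph{Item (2).} This is the Corollary following Proposition~\ref{prop:W12_compactness}: a finite-rank $W_{12}$ is compact, and that proposition then forces $W_{12}=0$.

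\emph{Item (3).} By Theorem~\ref{prop:hardy_space_multiplier}, $U_1W_{22}U_n^*=M_\Theta$ with $\Theta(z)=\sum_{r\geq 0}z^rL_r$. Uniqueness of $\Theta$ comes from uniqueness of the symbol in $\{M_z\}'$, and $\Theta$ is determined by $L$ through the $L_r$. As in Definition~\ref{rem:toeplitz_block}, since $U_n V=U_1$ we get
\[
W_{22}V=U_1^*M_\Theta U_n V=U_1^*M_\Theta U_1.
\]
So the Toeplitz-type realization $W_{22}V$ is unitarily equivalent to $M_\Theta$ via $U_1$. This completes the proof.
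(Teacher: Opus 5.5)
Your proposal is correct and follows the paper's route. The paper likewise obtains the theorem by combining the vanishing of the lower-left block (from $W_L(\cln)\subseteq\clm$), the unitarity of $W_{11}$, the corollary to Proposition~\ref{prop:W12_compactness}, and Theorem~\ref{prop:hardy_space_multiplier} together with Definition~\ref{rem:toeplitz_block}; your check that $\mu\mapsto\mu+1$ is a bijection from $\cln_0$ onto $\clm_0$ with inverse $\gamma\mapsto\gamma-1$, and your verification that $U_nV=U_1$, supply details the paper only asserts.
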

In Section~\ref{sec:Examples}, we present several examples in which the associated analytic symbol is computed explicitly from a given symbol \(L\), using Theorems~\ref{prop:hardy_space_multiplier} and~\ref{thm:WL_structural_sum}.

\section{Applications of the Block Decomposition} \label{sec:appl_block_decomp}
In this section, we apply the block decomposition from
Theorem~\ref{thm:WL_structural_sum} to investigate the operator-theoretic
properties of the odometer map \(W_L\).
\subsection{Factorization of odometer maps}\label{sec:Douglas-factorization-odometer-maps}
The block matrix structure of the odometer map enables a precise transfer of factorization properties from the level of defining symbols to the full Fock space. In particular, the following result applies Douglas’ lemma \cite{Douglas1966} to characterize when one odometer map factors through another in terms of their defining symbols. 
First, assume that $\ran L_1\subseteq \ran L_2$. By Douglas' lemma
\cite{Douglas1966}, there exists an operator $C\in \clb(\cle)$ such that
$$L_1=L_2C.$$
We fix such an operator \(C\) throughout this section, and 
show that this factorization at the level of symbols is
equivalent to a corresponding factorization of the odometer maps.

\begin{theorem}\label{thm:douglas_odometer}
Let $L_1, L_2 \in \clb(\cle, \clf_n^2 \otimes \cle)$. Then $\ran L_1 \subseteq \ran L_2$ if and only if there exists a bounded operator $\Gamma \in \clb(\clf_n^2 \otimes \cle)$ 
satisfying
\[
\Gamma(\Omega \otimes \cle)\subseteq \Omega \otimes \cle, \text{ and } W_{L_1}=W_{L_2}\Gamma.
\]
Further, if $W_{L_2}$ is an isometry, then $W_{L_1}$ is an isometry if and only if $\Gamma$ is an isometry.
\end{theorem}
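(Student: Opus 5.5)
For the forward direction I take the operator $C\in\clb(\cle)$ with $L_1=L_2C$, fixed above via Douglas' lemma (Theorem~\ref{thm:douglas_lemma}). I then define $\Gamma$ to act as $C$ on the vacuum component and as the identity elsewhere:
\[
\Gamma(\Omega\otimes\eta)=\Omega\otimes C\eta,\qquad \Gamma(e_\mu\otimes\eta)=e_\mu\otimes\eta\quad(\mu\neq\emptyset).
\]
This is $\Gamma=(P_\Omega\otimes C)+(P_\Omega^\perp\otimes I_\cle)$, so it is bounded with $\|\Gamma\|\le\max\{1,\|C\|\}$, and it clearly satisfies $\Gamma(\Omega\otimes\cle)\subseteq\Omega\otimes\cle$.

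The identity $W_{L_1}=W_{L_2}\Gamma$ is checked on the spanning set $\{e_\mu\otimes\eta\}$ using \eqref{eq:defn-W_L}, in three cases.
\begin{enumerate}
\item For $\mu=\emptyset$: $W_{L_2}\Gamma(\Omega\otimes\eta)=L_2C\eta=L_1\eta$.
\item For $\mu\in\cln_0$: both maps send $e_\mu\otimes\eta$ to $e_{\mu+1}\otimes\eta$, independently of the symbol.
\item For $\mu=n^m$ with $m\ge1$: $W_{L_2}(e_n^m\otimes\eta)=e_1^m\otimes L_2\eta$. This is the delicate case, since we need $e_1^m\otimes L_2 C\eta$ on the right and $\Gamma$ acts as the identity here.
\end{enumerate}
So on $\cln^\perp\ominus(\Omega\otimes\cle)$ the definition must be modified. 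I set $\Gamma(e_n^m\otimes\eta)=e_n^m\otimes C\eta$ for all $m\ge0$ and keep $\Gamma$ equal to the identity on $\cln$. That is, $\Gamma=P_\cln\oplus (I_{\clf_n^2}\otimes C)|_{\cln^\perp}$. It is still bounded, still preserves $\Omega\otimes\cle$, and now case (3) gives $W_{L_2}\Gamma(e_n^m\otimes\eta)=e_1^m\otimes L_2C\eta=e_1^m\otimes L_1\eta=W_{L_1}(e_n^m\otimes\eta)$.

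For the converse, suppose $W_{L_1}=W_{L_2}\Gamma$ with $\Gamma(\Omega\otimes\cle)\subseteq\Omega\otimes\cle$. Write $\Gamma(\Omega\otimes\eta)=\Omega\otimes C\eta$, which defines a bounded $C\in\clb(\cle)$. Applying both sides to $\Omega\otimes\eta$ gives $L_1\eta=W_{L_2}(\Omega\otimes C\eta)=L_2C\eta$, so $\ran L_1\subseteq\ran L_2$.

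For the isometry statement, assume $W_{L_2}$ is an isometry. Then $\|W_{L_1}x\|=\|W_{L_2}\Gamma x\|=\|\Gamma x\|$ for every $x$, so $W_{L_1}$ is an isometry if and only if $\Gamma$ is. This uses only $W_{L_2}^*W_{L_2}=I$, and it holds for whichever $\Gamma$ realizes the factorization.

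The main obstacle is the forward construction. A naive $\Gamma$ that alters only the vacuum fails on the carry vectors $e_n^m\otimes\eta$, because the odometer feeds the symbol into every word consisting entirely of $n$'s. The fix is to let $\Gamma$ act by $I\otimes C$ on the whole column $\cln^\perp$, and this is exactly where the block decomposition \eqref{eq:matrix representation} and Lemma~\ref{lem:W12W22} enter.
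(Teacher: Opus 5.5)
Your proof is correct and follows essentially the same route as the paper. Your final choice $\Gamma=I_{\cln}\oplus (I\otimes C)|_{\cln^\perp}$ is exactly the paper's $\Gamma=I_{\cln}\oplus\Gamma_C$. The converse (restricting $\Gamma$ to $\Omega\otimes\cle$ to obtain $L_1=L_2C$) and the isometry equivalence via $W_{L_1}^*W_{L_1}=\Gamma^*\Gamma$ also match the paper's argument.
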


\begin{proof}
Let $\ran L_1\subseteq \ran L_2$. 
Define $\Gamma_C:\cln^\perp \to \cln^\perp$ by
\[
\Gamma_C(e_n^{p}\otimes \eta) = e_n^{p}\otimes C\eta, \qquad p\ge 0,\ \eta\in\cle.
\]
Since $\cln^\perp=\bigoplus_{p\ge0}(e_n^{ p}\otimes\cle)$, $\Gamma_C$ is a bounded operator with $\|\Gamma_C\|=\|C\|$. With respect to the orthogonal decomposition $\clf_n^2\otimes\cle=\cln\oplus \cln^\perp,$ we define
\[
\Gamma=
\begin{pmatrix}
I_{\cln} & 0\\
0 & \Gamma_C
\end{pmatrix}.
\]
By construction, $\Gamma$ is bounded with norm $\|\Gamma\| = \max\{1, \|C\|\}$. For any $\eta \in \cle,\, \Gamma(\Omega\otimes\eta)=\Omega\otimes C\eta\in \Omega\otimes\cle$ implies $$\Gamma(\Omega\otimes\cle) \subseteq \Omega\otimes\cle.$$ 

We now verify the factorization $W_{L_1}=W_{L_2}\Gamma$. For any $x\in \cln$, identifying $x$ with $x\oplus 0$, we have $\Gamma x = x$. Because the action of any odometer map on the space $\cln$ is completely independent of the symbol, we obtain
\[
W_{L_1}x=W_{L_2}x=W_{L_2}\Gamma x, \qquad x\in \cln.
\]
Now let $e_n^{p}\otimes\eta\in \cln^\perp$. Then
\[
W_{L_2}\Gamma(e_n^{p}\otimes\eta)
=
W_{L_2}(e_n^{p}\otimes C\eta)
=
e_1^{ p}\otimes L_2C\eta
=
e_1^{ p}\otimes L_1\eta
=
W_{L_1}(e_n^{p}\otimes\eta).
\]
Since the operators agree on both $\cln$ and $\cln^\perp$, we conclude that $W_{L_1}=W_{L_2}\Gamma$.

Conversely, let $\Gamma \in \clb(\clf_n^2 \otimes \cle)$ be such that $\Gamma(\Omega\otimes\cle)\subseteq \Omega\otimes\cle$ and   $W_{L_1}=W_{L_2}\Gamma$. Because the closed subspace $\Omega\otimes\cle$ is naturally isomorphic to $\cle$, the restriction of $\Gamma$ to this subspace defines a unique bounded operator $C\in \clb(\cle)$ such that
\[
\Gamma(\Omega\otimes\eta)=\Omega\otimes C\eta, \qquad \eta\in\cle.
\]
Now
\[
L_1\eta
=
W_{L_1}(\Omega\otimes\eta)
=
W_{L_2}\Gamma(\Omega\otimes\eta)
=
W_{L_2}(\Omega\otimes C\eta)
=
L_2C\eta.
\]
Therefore, $L_1=L_2C$, and hence $\ran L_1\subseteq \ran L_2$.

Finally, assume that $W_{L_2}$ is an isometry. Since $W_{L_1}=W_{L_2}\Gamma$, we have
\[
W_{L_1}^*W_{L_1}
=
\Gamma^*W_{L_2}^*W_{L_2}\Gamma
=
\Gamma^*\Gamma.
\]
Therefore, $W_{L_1}^*W_{L_1} = I$ if and only if $\Gamma^*\Gamma = I$.
\end{proof}

The preceding factorization theorem also has a natural Hardy space
interpretation. In the theory of vector-valued Hardy spaces, factorization problems for operator-valued analytic functions, particularly Schur-class and inner
functions, form a highly nontrivial part of the subject. Such factorization problems are closely related to Leech's theorem
\cite{Leech2014} and to range inclusion problems for Toeplitz and Hankel operators on vector-valued Hardy spaces \cite{Bhuia}. They also arise naturally in the Sz.-Nagy--Foiaş dilation-theoretic framework; see \cite[Chapter~5]{SzNagyFoias}.

Under the geometric assumptions \(L_i\cle\subseteq\clm^\perp\), the
Douglas-type factorization of odometer maps provides a bridge between
Fock-space operator theory and classical Hardy space multiplier factorization.
The following corollary shows that range inclusion for Fock-space symbols
gives rise to analytic factorization of the corresponding Hardy space
multipliers.

\begin{corollary}\label{cor:douglas_toeplitz_factorization}
Let $L_i \in \clb(\cle, \clf_n^2 \otimes \cle)$ for $i=1,2$ be bounded symbols satisfying $L_i\cle \subseteq \clm^{\perp}$, and let $\Theta_i \in H^\infty_{\clb(\cle)}(\D)$ be their associated analytic symbols. 
If $\ran L_1 \subseteq \ran L_2$, then there exists $\Theta_3 \in H^\infty_{\clb(\cle)}(\D)$ such that
\[
\Theta_1(z) = \Theta_2(z) \Theta_3 (z), \qquad z \in \D, 
\]
where $\Theta_3$ is a constant operator-valued function given by $\Theta_3(z) \equiv C$.
\end{corollary}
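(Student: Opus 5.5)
Since $\ran L_1\subseteq\ran L_2$, Douglas' lemma (Theorem~\ref{thm:douglas_lemma}) gives an operator $C\in\clb(\cle)$ with $L_1=L_2C$; this is the fixed $C$ of the section. The plan is to show directly that the Taylor coefficients of $\Theta_1$ factor through $C$. Then $\Theta_3(z)\equiv C$, which is trivially a bounded analytic (constant) function, so $\Theta_3\in H^\infty_{\clb(\cle)}(\D)$ with $\|\Theta_3\|_\infty=\|C\|$.

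\textbf{Step 1: coefficients.} By Theorem~\ref{prop:hardy_space_multiplier}, $\Theta_i(z)=\sum_{r\ge0}z^rL^{(i)}_r$, where $L^{(i)}_r=(\langle e_1^r,\cdot\rangle\otimes I_\cle)L_i$. Composing $L_1=L_2C$ with the bounded operator $\langle e_1^r,\cdot\rangle\otimes I_\cle$ on the left gives
\[
L^{(1)}_r=(\langle e_1^r,\cdot\rangle\otimes I_\cle)L_2C=L^{(2)}_rC
\]
for every $r\ge0$. Hence, for each $z\in\D$,
\[
\Theta_1(z)=\sum_{r\ge0}z^rL^{(2)}_rC=\Theta_2(z)C.
\]
Pulling $C$ out of the series is justified because the series for $\Theta_2(z)$ converges in operator norm for $|z|<1$: the coefficients $L^{(2)}_r$ are uniformly bounded by $\|L_2\|$. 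Alternatively, one can argue pointwise on vectors, with strong convergence. This is the whole proof.

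\textbf{Step 2: the Fock-space picture.} Under $L_i\cle\subseteq\clm^\perp$, Lemma~\ref{lem:W12W22} gives $W_{12}=0$ and $W_{22}=\restr{W_{L_i}}{\cln^\perp}$. The $\Gamma$ of Theorem~\ref{thm:douglas_odometer} restricts to $\Gamma_C$ on $\cln^\perp$, and one has $U_n\Gamma_CU_n^*=M_C$ (constant multiplier). Compressing $W_{L_1}=W_{L_2}\Gamma$ to $\cln^\perp\to\clm^\perp$ and conjugating by $U_1,U_n$ yields $M_{\Theta_1}=M_{\Theta_2}M_C=M_{\Theta_2C}$. Uniqueness of symbols of analytic Toeplitz operators then recovers Step 1. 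I would mention this as the conceptual route, but the coefficient computation suffices.

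\textbf{Main obstacle.} There is no real obstacle. The only points needing care are convergence of the operator series when factoring out $C$, and noting that the hypothesis $L_i\cle\subseteq\clm^\perp$ is only used to place the statement in the block-diagonal setting; the coefficient identity itself holds for arbitrary symbols.
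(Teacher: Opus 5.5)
Your proposal is correct and matches the paper. Your Step 1 is exactly the direct coefficient computation $L_{1,r}=L_{2,r}C$ that the paper records in the Remark immediately after the corollary. Your Step 2 is the paper's actual proof: it reduces to $W_{L_i}=W_{11}\oplus W_{22}^{(i)}$, deduces $W_{22}^{(1)}=W_{22}^{(2)}\Gamma_C$ from Theorem~\ref{thm:douglas_odometer}, and identifies $U_n\Gamma_CU_n^*=I_{H^2(\D)}\otimes C$ as a constant multiplier commuting with $M_z$. Your norm-convergence justification for factoring out $C$ is accurate, and so is your observation that the coefficient identity does not need the hypothesis $L_i\cle\subseteq\clm^\perp$.
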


\begin{proof}
Let $W_{L_i}$ admit the block decomposition:
\[
W_{L_i} =
\begin{pmatrix}
W^{(i)}_{11} & W^{(i)}_{12} \\
0 & W^{(i)}_{22}
\end{pmatrix}, \quad i = 1,2.
\]
Since $L_i\cle \subseteq \clm^\perp$, we have $W_{12}^{(1)}=W_{12}^{(2)}=0$. Moreover, the block $W_{11}^{(i)}$ is independent of the symbol. Hence, setting $W_{11}:=W_{11}^{(1)}=W_{11}^{(2)},$ we obtain $W_{L_i}=W_{11}\oplus W_{22}^{(i)}$ for $i=1,2.$
By Theorem~\ref{thm:douglas_odometer}, $W_{L_1}=W_{L_2}\Gamma$, where $\Gamma=I_{\cln}\oplus \Gamma_C$. Therefore, 
\[
W_{11}\oplus W_{22}^{(1)}
=
(W_{11}\oplus W_{22}^{(2)})(I_{\cln}\oplus\Gamma_C)
=
W_{11}\oplus W_{22}^{(2)}\Gamma_C,
\]
and hence, $W_{22}^{(1)}=W_{22}^{(2)}\Gamma_C.$
By Theorem~\ref{prop:hardy_space_multiplier}, $M_{\Theta_i}=U_1W_{22}^{(i)}U_n^*$ for $i=1,2.$
Hence, 
\[
M_{\Theta_1} = M_{\Theta_2}(U_n\Gamma_CU_n^*).
\]
Consider $U_n \Gamma_C U_n^* \in \clb(H^2_\cle(\D))$. For $p\ge 0$ and $\eta \in \cle$, 
\[
U_n \Gamma_C U_n^*(z^p \eta) 
= U_n \Gamma_C (e_n^{p} \otimes \eta) 
= U_n (e_n^{p} \otimes C\eta) 
= z^p C\eta.
\]
Therefore, $U_n \Gamma_C U_n^* = I_{H^2(\D)} \otimes C$. Notice that
\begin{align*}
(I_{H^2(\D)} \otimes C) M_z (z^p \eta) = z^{p+1} C\eta=
M_z (I_{H^2(\D)} \otimes C)(z^p \eta).
\end{align*}
Since the operators coincide on a dense subset, it follows that $(I_{H^2(\D)} \otimes C) M_z = M_z (I_{H^2(\D)} \otimes C)$. Hence there exists a unique symbol $\Theta_3 \in H^\infty_{\clb(\cle)}(\D)$ such that $I_{H^2(\D)} \otimes C = M_{\Theta_3}$. Therefore $M_{\Theta_1} = M_{\Theta_2} M_{\Theta_3},$
completing the proof.
\end{proof}

\begin{remark}
The conclusion of Corollary~\ref{cor:douglas_toeplitz_factorization} 
also follows directly.
We get
\[
L_{1,r}
=
(\langle e_1^r,\cdot\rangle\otimes I_\cle)L_1
=
(\langle e_1^r,\cdot\rangle\otimes I_\cle)L_2C
=
L_{2,r}C,
\qquad r\geq0.
\]
Therefore, by Theorem~\ref{prop:hardy_space_multiplier},
\[
\Theta_1(z)
=
\sum_{r\geq0}z^rL_{1,r}
=
\sum_{r\geq0}z^rL_{2,r}C
=
\Theta_2(z)C,
\qquad z\in\D.
\]
\end{remark}

\subsection{Characterizations} \label{sec:Characterizations}
Now, using the block decomposition \eqref{eq:matrix representation}, we obtain unified characterizations of odometer maps that are isometric,
unitary, or invertible in terms of the associated analytic symbol,
thereby refining \cite[Theorem~3.1]{mansi2025}. 

\begin{theorem}\label{thm:operator_characterization_symbol}
Let $L \in \clb(\cle,\clf_n^2\otimes \cle)$
and let $\Theta \in H^\infty_{\clb(\cle)}(\D)$
be the analytic symbol associated with  $W_L.$
Then 
\begin{enumerate}
    \item \(W_L\) is an isometry if and only if \(L(\cle)\subseteq \clm^\perp\), and \(\Theta\) is inner.

   \item \(W_L\) is unitary if and only if $L\cle\subseteq \clm^\perp$
and \(\Theta\) is a constant unitary operator-valued function.

   \item \(W_L\) is invertible if and only if \(\Theta\) is invertible in
\(H^\infty_{\clb(\cle)}(\D)\). 
 \end{enumerate}
\end{theorem}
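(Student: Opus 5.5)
The plan is to read all three statements off the block decomposition of Theorem~\ref{thm:WL_structural_sum}. There, $W_L=\begin{pmatrix} W_{11} & W_{12}\\ 0 & W_{22}\end{pmatrix}$ maps $\cln\oplus\cln^\perp\to\clm\oplus\clm^\perp$, the block $W_{11}:\cln\to\clm$ is unitary, and $U_1W_{22}U_n^*=M_\Theta$ with $U_1,U_n$ unitary. I will use two facts throughout. First, from the proof of Proposition~\ref{prop:W12_compactness}, $\|W_{12}(e_n^p\otimes\eta)\|=\|P_\clm L\eta\|$, so
\[
W_{12}=0 \iff P_\clm L=0 \iff L\cle\subseteq\clm^\perp .
\]
Second, any property of $W_{22}$ that is invariant under composing with unitaries on either side (isometry, unitarity, invertibility) transfers verbatim to $M_\Theta$. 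So each part splits into a block-matrix argument and a standard Hardy space fact recalled in the preliminaries.

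For (1), I expand $W_L^*W_L=I$ blockwise on $\cln\oplus\cln^\perp$. This gives three equations:
\[
W_{11}^*W_{11}=I,\qquad W_{11}^*W_{12}=0,\qquad W_{12}^*W_{12}+W_{22}^*W_{22}=I .
\]
The first holds automatically. Since $W_{11}$ is unitary onto $\clm$, the operator $W_{11}^*$ is injective on $\clm$, so the second equation forces $W_{12}=0$, i.e. $L\cle\subseteq\clm^\perp$. The third then says $W_{22}$ is an isometry, equivalently $M_\Theta$ is an isometry, equivalently $\Theta$ is inner. Conversely, if $L\cle\subseteq\clm^\perp$ and $\Theta$ is inner, then $W_L=W_{11}\oplus W_{22}$ is a direct sum of isometries and hence an isometry.

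For (2), if $W_L$ is unitary, then by (1) it is an isometry, so $W_{12}=0$ and $W_L=W_{11}\oplus W_{22}$. A direct sum is unitary exactly when each summand is, so $W_{22}$, and hence $M_\Theta$, is unitary. That happens if and only if $\Theta$ is a constant unitary. The converse is again the direct-sum statement.

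For (3), I first show that $W_L$ is invertible if and only if $W_{22}$ is. If $W_{22}$ is invertible, the explicit upper triangular inverse
\[
\begin{pmatrix} W_{11}^{-1} & -W_{11}^{-1}W_{12}W_{22}^{-1}\\ 0 & W_{22}^{-1}\end{pmatrix}
\]
works. Conversely, write $W_L^{-1}=\begin{pmatrix} A & B\\ C & D\end{pmatrix}:\clm\oplus\clm^\perp\to\cln\oplus\cln^\perp$.
\begin{itemize}
\item The lower-left entry of $W_L^{-1}W_L=I$ gives $CW_{11}=0$. Since $W_{11}$ is surjective onto $\clm$, this forces $C=0$.
\item With $C=0$, the lower-right entry of $W_L^{-1}W_L=I$ gives $DW_{22}=I$.
\item The lower-right entry of $W_LW_L^{-1}=I$ gives $W_{22}D=I$.
\end{itemize}
Hence $W_{22}$ is invertible.

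Next, $W_{22}$ is invertible if and only if $M_\Theta$ is. If $\Theta$ has an inverse $\Psi\in H^\infty_{\clb(\cle)}(\D)$, then $M_\Psi$ inverts $M_\Theta$. If $M_\Theta$ is invertible, then $M_\Theta^{-1}$ commutes with $M_z$ because $M_\Theta$ does. By the commutant description $\{M_z\}'=\{M_\Psi\}$ recalled earlier, $M_\Theta^{-1}=M_\Psi$ for some bounded analytic $\Psi$. Then $M_{\Theta\Psi}=M_{\Psi\Theta}=I$, and evaluating on constants gives $\Theta\Psi=\Psi\Theta=I$ pointwise on $\D$.

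The only step needing care is the converse in (3). It relies on the surjectivity of $W_{11}$ onto $\clm$, i.e. $W_L(\cln)=\clm$, which is what kills the block $C$. Everything else is bookkeeping.
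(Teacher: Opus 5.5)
Your proposal is correct and follows essentially the same route as the paper: everything is read off the block decomposition, using that $W_{11}$ is unitary, that $W_{12}=0$ exactly when $L\cle\subseteq\clm^\perp$, and that conjugation by $U_1,U_n$ transfers isometry, unitarity and invertibility between $W_{22}$ and $M_\Theta$. Your identity $W_{11}^*W_{12}=0$ is just the block form of the paper's computation $\langle L\eta, W_L x\rangle=\langle \Omega\otimes\eta, x\rangle=0$ for $x\in\cln$. The remaining differences are minor: in the converse of (3) you read off the bounded inverse of $W_{22}$ as the lower-right entry of $W_L^{-1}$ (after showing $C=0$), instead of proving bijectivity and invoking the bounded inverse theorem, and you justify via the commutant $\{M_z\}'$ that invertibility of $M_\Theta$ is equivalent to invertibility of $\Theta$ in $H^\infty_{\clb(\cle)}(\D)$, a step the paper only asserts.
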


\begin{proof}
\begin{enumerate}
        \item 
Suppose that $W_L$ is an isometry. Pick $\eta, \gamma \in \cle$, and let $e_\mu \otimes \gamma \in \clm$. By the definition of the odometer map, there exists a unique preimage $e_{\mu-1} \otimes  \gamma \in \cln$ such that $W_L( e_{\mu-1} \otimes  \gamma)=e_\mu \otimes \gamma$.
Since $W_L$ is an isometry, we have
\[
\langle L\eta,\, e_\mu \otimes \gamma \rangle
=
\langle W_L(\Omega \otimes \eta),\, W_L( e_{\mu-1} \otimes  \gamma) \rangle
= \langle \Omega \otimes \eta,\, e_{\mu-1} \otimes  \gamma \rangle = 0,
\]
where the final equality holds because $e_{\mu-1}\otimes\gamma \in \cln$, and hence is orthogonal to the vacuum space. Hence, $L\cle \subseteq \clm^\perp$, and $W_{12} = 0$. Therefore, $W_L = W_{11} \oplus W_{22}$. Since $W_{11}$ is unitary, it follows that $W_{22}$ is an isometry. As $U_1$ and $U_n$ are unitaries, Theorem~\ref{prop:hardy_space_multiplier} implies
\[
M_{\Theta}^*M_{\Theta}
=
U_n W_{22}^* W_{22} U_n^*
=
I_{H^2_\cle(\D)},
\]
which shows that $M_{\Theta}$ is an isometry. Equivalently, $\Theta$ is inner.

Conversely, if $L(\cle)\subseteq \clm^\perp$ then $W_{12}=0.$ Since $\Theta$ is inner, the Toeplitz operator $M_\Theta$ is an isometry. Therefore, $W_{22}=U_1^*M_\Theta U_n$ is an isometry. Because $W_{11}$ is unitary, the diagonal block operator $W_L = W_{11} \oplus W_{22}$ is an isometry.

\item By (1) above \(W_L\) is unitary if
and only if $L\cle\subseteq\clm^\perp$ and \(M_\Theta\) is unitary, which holds if and only if \(\Theta\) is
a constant unitary operator-valued function.
\item
Since $M_\Theta = U_1 W_{22} U_n^*,$ we have $W_{22}\text{ is invertible}$ if and only if $
M_\Theta\text{ is invertible}.$
Equivalently, \(\Theta\) is invertible in
\(H^\infty_{\clb(\cle)}(\D)\), that is,
\(\Theta^{-1}\in H^\infty_{\clb(\cle)}(\D)\).
Assume first that $\Theta \in H^\infty_{\clb(\cle)}(\D)$ is invertible. Define $X:\clm\oplus \clm^\perp \to \cln\oplus \cln^\perp$ by
\[
X:=
\begin{pmatrix} 
W_{11}^* & -\,W_{11}^*W_{12}W_{22}^{-1}\\
0 & W_{22}^{-1}
\end{pmatrix}.
\]
Since $W_{11}$ is unitary, a direct computation gives
\[
W_LX=
\begin{pmatrix}
W_{11} & W_{12}\\
0 & W_{22}
\end{pmatrix}
\begin{pmatrix}
W_{11}^* & -\,W_{11}^*W_{12}W_{22}^{-1}\\
0 & W_{22}^{-1}
\end{pmatrix}
=
\begin{pmatrix}
I_{\clm} & 0\\
0 & I_{\clm^\perp}
\end{pmatrix}.
\]
That is, $W_LX=I_{\clm\oplus \clm^\perp}$. A similar computation implies $XW_L= I_{\cln\oplus \cln^\perp}.$
Thus $X=W_L^{-1}$, and hence $W_L$ is invertible.

Conversely, assume that $W_L$ is invertible. We prove that $W_{22}:\cln^{\perp} \to \clm^{\perp}$ is both surjective and injective.
First, let $y\in \clm^\perp$. Since $W_L$ is onto, there exists $u\oplus v\in \cln\oplus \cln^\perp$ such that $W_L(u\oplus v)=0\oplus y.$
Equivalently,
\[
W_{11}u+W_{12}v=0, \qquad W_{22}v=y.
\]
Hence $y\in \ran W_{22}$, so $W_{22}$ is surjective.
Next, let $v\in \cln^\perp$ satisfy $W_{22}v=0$. Since $W_{11}:\cln\to \clm$ is unitary, there exists $u\in \cln$ such that $W_{11}u=-\,W_{12}v.$
Then
\[
W_L(u\oplus v)=\big(W_{11}u+W_{12}v\big)\oplus W_{22}v=0\oplus 0.
\]
As $W_L$ is injective, it follows that $u=0$ and $v=0$. Hence $\ker W_{22}=\{0\}$. Since $W_{22}: \cln^\perp \to \clm^\perp$ is a bounded bijection between Hilbert spaces, the bounded inverse theorem implies that \(W_{22}^{-1}\in \clb(\clm^\perp,\cln^\perp)\).
Consequently $\Theta$ is invertible. 
\end{enumerate}
\end{proof}

The following example shows that invertibility of \(W_L\) does not force the
associated analytic symbol \(\Theta\) to be a constant unitary
operator-valued function, in contrast to the unitary case. 

\begin{example}
 Let \(n\geq2\),
and fix a scalar \(a\in\C\) with \(0<|a|<1\). Define
\(L:\cle\to \clf_n^2\otimes\cle\) by
\[
L\eta=\Omega\otimes\eta+a\,e_1\otimes\eta,
\qquad \eta\in\cle.
\]
The associated coefficient operators are
\[
L_0=I_\cle,\qquad L_1=aI_\cle,\qquad L_r=0\quad(r\geq2).
\]
Hence the associated analytic symbol is
\[
\Theta(z)=I_\cle+azI_\cle=(1+az)I_\cle,\qquad z\in\D.
\]
Since \(0<|a|<1\), the scalar function \(1+az\) has no zeros in \(\D\), and $\frac{1}{1+az}\in H^\infty(\D).$
Therefore
\[
\Theta^{-1}(z)=\frac{1}{1+az}I_\cle \in H^\infty_{\clb(\cle)}(\D).
\]
Thus \(\Theta\) is invertible in
\(H^\infty_{\clb(\cle)}(\D)\). By
Theorem~\ref{thm:operator_characterization_symbol}, \(W_L\) is invertible.

On the other hand, \(\Theta\) is not a constant unitary operator-valued
function, since \(a\neq0\). Hence \(W_L\) is not unitary. Indeed, for any nonzero
\(\eta\in\cle\),
\[
\|W_L(\Omega\otimes\eta)\|^2
=
\|L\eta\|^2
=
\|\Omega\otimes\eta+a\,e_1\otimes\eta\|^2
=
(1+|a|^2)\|\eta\|^2 > \|\eta\|^2.
\]
Thus \(W_L\) is not an isometry, and consequently \(W_L\) is not unitary.
\end{example}

\subsection{Norm of the Odometer Map }
\label{sec:norm_W_L}
We compute the norm of $W_L$. For an arbitrary bounded symbol $L$, it is known that $\|L\| \leq \|W_L\| \leq \|L\|+1$
(cf.~\cite[Remark~2.4]{mansi2025}). Although these inequalities provide useful general bounds, they do not yield an exact norm formula. We show that, under the additional assumption $L\cle \subseteq \clm^\perp,$
the preceding theorem yields an exact norm formula. Let $\|\Theta\|_\infty$ denote the norm of $\Theta$ in $H^\infty_{\clb(\cle)}(\D)$.

\begin{proposition}
\label{normWL}
Let $W_L \in \clb(\clf_n^2 \otimes \cle)$ be the odometer map with $L\cle \subseteq \clm^\perp$, and let $\Theta
\in H^\infty_{\clb(\cle)}(\D)$
be the associated analytic symbol. Then
\[
\|W_L\| = \max\{1,\|\Theta\|_{\infty}\}.
\]
\end{proposition}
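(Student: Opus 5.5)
Under the hypothesis $L\cle\subseteq\clm^\perp$, Lemma~\ref{lem:W12W22} gives $W_{12}=\restr{W_{(P_{\clm}L)}}{\cln^\perp}=0$. Relative to $\cln\oplus\cln^\perp$ and $\clm\oplus\clm^\perp$, the block decomposition \eqref{eq:matrix representation} then becomes $W_L=W_{11}\oplus W_{22}$. This is a diagonal operator between two orthogonal decompositions, so for $x=u\oplus v$ we have
\[
\|W_Lx\|^2=\|W_{11}u\|^2+\|W_{22}v\|^2,
\qquad \|x\|^2=\|u\|^2+\|v\|^2 .
\]
It follows that $\|W_L\|=\max\{\|W_{11}\|,\|W_{22}\|\}$: the inequality $\le$ comes from the displayed identity, and $\ge$ comes from testing on vectors supported in $\cln$ or in $\cln^\perp$ alone. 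The first step is therefore to reduce the norm computation to the two diagonal blocks.

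Next I identify each block norm. By Theorem~\ref{thm:WL_structural_sum}, $W_{11}:\cln\to\clm$ is unitary, so $\|W_{11}\|=1$ whenever $\cln\neq\{0\}$. By Theorem~\ref{prop:hardy_space_multiplier}, $W_{22}=U_1^*M_\Theta U_n$ with $U_1,U_n$ unitary, so $\|W_{22}\|=\|M_\Theta\|=\|\Theta\|_\infty$, using the standard fact $\|M_\Theta\|=\|\Theta\|_\infty$ recalled in Section~\ref{sec:Preliminaries}. Combining these gives $\|W_L\|=\max\{1,\|\Theta\|_\infty\}$.

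The main obstacle is the degenerate case where $\cln=\{0\}$, that is, $n=1$ (or $\cle=\{0\}$, which is trivial). In that case the unitary block is absent and the argument above only yields $\|W_L\|=\|\Theta\|_\infty$. I would treat $n\ge 2$ (with $\cle\neq\{0\}$) as the main case, where $\cln$ is infinite-dimensional and the formula holds exactly. For $n=1$, I would check whether the statement tacitly assumes $n\geq2$, or else note that the formula must be read as $\|W_L\|=\|\Theta\|_\infty$ there. Indeed, $L=0$ gives $W_L=0$ for $n=1$, so the literal formula fails in that case, and the proof should either exclude it or state this caveat explicitly.
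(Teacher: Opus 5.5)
Your proof is correct and is essentially the paper's argument: $W_{12}=0$ gives $W_L=W_{11}\oplus W_{22}$, then $\|W_{11}\|=1$ by unitarity and $\|W_{22}\|=\|M_\Theta\|=\|\Theta\|_\infty$ via the unitaries $U_1,U_n$. Your caveat is also well founded. The paper's proof tacitly uses $\cln\neq\{0\}$, i.e.\ $n\ge 2$ and $\cle\neq\{0\}$. For $n=1$ one only gets $\|W_L\|=\|\Theta\|_\infty$; for instance, $L\eta=\tfrac12\,\Omega\otimes\eta$ gives $W_L=\tfrac12 I$, so the stated formula fails there.
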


\begin{proof}
Since $L\cle \subseteq \clm^\perp$, $W_{12}=0.$
Hence the block decomposition \eqref{eq:matrix representation} reduces to $W_L=
W_{11} \oplus W_{22}$,
and therefore
\[
\|W_L\|=\max\{\|W_{11}\|,\|W_{22}\|\}.
\]
Since \(M_\Theta=U_1W_{22}U_n^*\) and \(U_1,U_n\) are unitaries, we have
$\|W_{22}\|=\|M_\Theta\|=\|\Theta\|_{\infty}.$
Since $W_{11}$ is unitary, we obtain
\[
\|W_L\|
=
\max\{1,\|\Theta\|_{\infty}\}.
\]
\end{proof}
Notice that, if $L\cle \subseteq \clm^\perp$, then for every $\eta\in\cle$, $L\eta
=
W_L(\Omega\otimes \eta)
=
W_{22}(\Omega\otimes \eta).$
Consequently, $\|L\eta\|
\leq
\|W_{22}\|\,\|\eta\|,$
and hence Proposition~\ref{normWL} implies
\[
\|L\|
\leq \|W_{22}\|=
\|\Theta\|_\infty.
\]
Under the stronger geometric hypothesis $L\cle \subseteq \cle_L,$ we prove that $\|L\|=\|\Theta\|_\infty,$
and therefore the norm of $W_L$ is completely determined by the norm of $L$.

\begin{proposition} \label{prop:norm_theta_L}
Let \(L\in\clb(\cle,\clf_n^2\otimes\cle)\) satisfy \(L\cle\subseteq\cle_L\).
Then
$\|W_L\|=\max\{1,\|L\|\}.$
\end{proposition}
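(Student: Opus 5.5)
Since $\cle_L\subseteq\clm^\perp$ by \eqref{eq:EL} and \eqref{eq:Clm-Cln-Perp}, the hypothesis $L\cle\subseteq\cle_L$ gives in particular $L\cle\subseteq\clm^\perp$. Proposition~\ref{normWL} then yields $\|W_L\|=\max\{1,\|\Theta\|_\infty\}$, and the remark following it gives $\|L\|\le\|\Theta\|_\infty$. It therefore suffices to prove the reverse inequality $\|\Theta\|_\infty\le\|L\|$. Equivalently, since $\|M_\Theta\|=\|W_{22}\|$, we must show $\|W_{22}x\|\le\|L\|\,\|x\|$ for $x$ in the dense subspace of finite sums $x=\sum_{p=0}^N e_n^p\otimes\eta_p$ of $\cln^\perp$.

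For such $x$, Lemma~\ref{lem:W12W22} and $P_{\clm^\perp}L=L$ give
\[
W_{22}x=\sum_{p=0}^N e_1^p\otimes L\eta_p=\sum_{p=0}^N (S_1^p\otimes I_\cle)L\eta_p .
\]
The key step is to show that these summands are mutually orthogonal. For $p<q$,
\[
\langle (S_1^q\otimes I)L\eta_q,(S_1^p\otimes I)L\eta_p\rangle=\langle e_1^{q-p}\otimes L\eta_q,\,L\eta_p\rangle .
\]
This vanishes because $L\eta_p\in\cle_L$, which is orthogonal to $\ol{\rm span}\{e_1^{r}\otimes L\zeta: r\ge 1,\zeta\in\cle\}$ by the definition \eqref{eq:EL}. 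Pythagoras together with the isometric property of $S_1^p\otimes I_\cle$ then gives
\[
\|W_{22}x\|^2=\sum_{p}\|L\eta_p\|^2\le\|L\|^2\sum_p\|\eta_p\|^2=\|L\|^2\|x\|^2 .
\]
Hence $\|\Theta\|_\infty=\|W_{22}\|\le\|L\|$. Combining this with the earlier inequality gives $\|\Theta\|_\infty=\|L\|$, and therefore $\|W_L\|=\max\{1,\|L\|\}$.

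The only substantive point is the orthogonality step. In the inner product, the shifted vector $e_1^{q-p}\otimes L\eta_q$ must be the one lying in the subtracted span, and the unshifted $L\eta_p\in\cle_L$ must be the one tested against it. This is correct because $q-p\ge1$, and it is exactly what the definition of $\cle_L$ provides. Everything else is bookkeeping with the results already established.
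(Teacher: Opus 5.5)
Your proof is correct and is essentially the paper's argument: reduce via Proposition~\ref{normWL} and the remark after it to showing $\|W_{22}\|\le\|L\|$, then use $L\cle\subseteq\cle_L$ to make the summands $e_1^p\otimes L\eta_p$ mutually orthogonal and apply Pythagoras on the dense set of finite sums in $\cln^\perp$. One caveat, which applies equally to the paper's proof: the step $\|W_L\|=\max\{1,\|W_{22}\|\}$ uses $\|W_{11}\|=1$, and this requires $\cln\neq\{0\}$, i.e.\ $n\ge 2$ and $\cle\neq\{0\}$; for $n=1$ the same argument gives only $\|W_L\|=\|L\|$, and the stated formula fails (for example, $L=0$ satisfies $L\cle\subseteq\cle_L$ but gives $W_L=0$).
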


\begin{proof}
Let $\Theta \in H^\infty_{\clb(\cle)}(\D)$ denote the analytic symbol associated with $W_L$. Since $\cle_L \subseteq \clm^\perp$, Proposition~\ref{normWL} implies  $\|\Theta\|_\infty=\|W_{22}\|$. Thus, it suffices to prove that $\|W_{22}\| = \|L\|$. We already know $ \|L\| \leq \|W_{22}\|$. For the reverse inequality, let $f = \sum_{p=0}^N e_n^{ p}\otimes \eta_p \in \cln^\perp$ be a finite sum, where $\eta_p \in \cle$. Then $W_{22}f = \sum_{p=0}^N e_1^{ p}\otimes L\eta_p.$
As $L\cle \subseteq \cle_L$, we get $L\cle \perp (S_1^k \otimes I_\cle)L\cle$ for all $k \ge 1$. Since \(S_1\otimes I_\cle\) is an isometry, it follows that
\[
\|W_{22}f\|^2
=
\sum_{p=0}^N \|e_1^{p} \otimes L\eta_p\|^2
=
\sum_{p=0}^N \|L\eta_p\|^2
\le
\|L\|^2 \sum_{p=0}^N \|\eta_p\|^2
=
\|L\|^2 \|f\|^2.
\]
Since finite sums of this form are dense in $\cln^\perp$, this estimate extends by continuity to all of $\cln^\perp$, establishing that $\|W_{22}\| \le \|L\|$, and hence $\|W_{22}\| = \|L\|$.
\end{proof}
The following example shows that the assumption 
$L\cle \subseteq \cle_L$ in Proposition~\ref{prop:norm_theta_L}
cannot be weakened to the condition 
$L\cle \subseteq \clm^\perp$ alone in order to obtain $\|W_L\|=\max\{1,\|L\|\}.$

\begin{example}
Let $\cle = \mathbb{C}$, and define $L: \mathbb C \to \clf_n^2 $ by
\[
L(1)=\Omega + e_1.
\]
Then $\|L\|
=
\|\Omega + e_1\|
=
\sqrt{2}.$
The associated analytic symbol is
\[
\Theta(z)=1+z,
\]
and hence $\|\Theta\|_\infty
=
\sup_{|z|<1}|1+z|
=
2.$
Therefore, by Proposition~\ref{normWL},
$$\|W_L\|
=
\max\{1,\|\Theta\|_\infty\}
=
2
\neq
\max\{1,\|L\|\}
=
\sqrt{2}.$$
\end{example}

\begin{remark}
Let $W_L \in \clb(\clf_n^2\otimes\cle)$ be an odometer map satisfying
$L\cle\subseteq \clm^\perp$, and let $\Theta$ be its associated analytic
symbol. Then the spectral radius of $W_L$ satisfies $r(W_L)\le \max\{1,\|\Theta\|_\infty\}.$
In particular, if $\|\Theta\|_\infty\le 1$,  the exact norm evaluates to $\|W_L\| = 1$, making $W_L$ a contraction. Consequently, $W_L$ admits a unitary dilation in a larger Hilbert space (cf. \cite{SzNagyFoias,Paulsen2002,Pisier2001}). Moreover, by von Neumann's inequality,
$\|p(W_L)\| \le \sup_{|z|\le 1}|p(z)|$
for every polynomial $p$. Thus \(W_L\) admits a contractive functional calculus for the disk algebra \(A(\D)\). If, in addition, \(W_L\) is absolutely continuous in the sense of Sz.-Nagy--Foia\c{s}, then this calculus extends to the usual
\(H^\infty(\D)\)-functional calculus; namely, for every \(f\in H^\infty(\D)\),
\[
f(W_L)=\operatorname{SOT}\!-\!\lim_{r\to 1^-} f_r(W_L),
\qquad f_r(z)=f(rz).
\]
\end{remark}

\section{Examples}\label{sec:Examples}
In this section, we present a collection of examples illustrating the
structural decomposition in Theorem~\ref{thm:WL_structural_sum} and the
characterization results in Theorem~\ref{thm:operator_characterization_symbol}.
These examples show how different choices of the symbol \(L\) give rise to
explicit analytic symbols \(\Theta\) associated with \(W_L\). Recall that, for $r \geq 0$, \(L_r\in\clb(\cle)\) is defined by $L_r:=(\langle e_1^r,\cdot\rangle\otimes I_\cle)L.$

\begin{example} \label{ex:toeplitz symbol is M_z}
Let
$L:\cle\to \clf_n^2\otimes \cle$ be defined by
\[
L(\eta)=e_1\otimes \eta,
\qquad \eta\in \cle.
\]
Then 
$L_1 = I_\cle,$ and $L_r = 0, \, \forall r \neq 1.$ 
Therefore,
\[
\Theta(z)
=
\sum_{r=0}^{\infty} z^r L_r
=
z I_{\cle},
\qquad
\text{and }
\qquad
M_{\Theta}=M_z
\quad \text{on } H^2_{\cle}(\D).
\]
We verify directly that $U_1 W_{22} U_n^*=M_{z}.$ For $p \geq 0$ and $\eta \in \cle$, consider $e_n^{p} \otimes \eta \in \cln^\perp$. Then 
\[
U_1 W_{22} U_n^*(z^p \eta) = U_1 W_{22} (e_n^{p} \otimes \eta) = U_1 (e_1^{p} \otimes e_1 \otimes \eta) = z^{p+1} \eta=M_z (z^p \eta).
\]
Since the linear span of \(\{z^p\eta:p\geq0,\ \eta\in\cle\}\) is dense in $H^2_\cle(\D)$,
we get the required equality. Since \(L\cle\subseteq\clm^\perp\) and \(\Theta(z)=zI_\cle\) is inner,
Theorem~\ref{thm:operator_characterization_symbol} implies that \(W_L\) is an
isometry.
The constant symbol arises in the special case: If $L(\eta)=\Omega\otimes\eta$, then $\Theta(z)=I_\cle.$
\end{example}

\begin{example}
Let $\cle$ be an infinite-dimensional separable Hilbert space with orthonormal basis
$\{h_p\}_{p=0}^\infty$.
Define $L:\cle\to \clf_n^2\otimes \cle$ by
\[
L(h_p)=e_1^{ p}\otimes h_p,
\quad p\ge 0.
\]
For each $r\ge 0$, 
\[
L_r h_p
=
(\langle e_1^{r},\cdot\rangle\otimes I_{\cle})
(e_1^{p}\otimes h_p)
=
\delta_{r,p}h_p.
\]
Therefore, $ L_r=P_r,$
where $P_r$ is the orthogonal projection onto $\C h_r$, and $$\Theta(z)=\sum_{r=0}^\infty z^rP_r.$$
Moreover, for each $p\ge 0$, $\Theta(z)h_p=z^p h_p.$
Hence
\[
M_\Theta(f(z)h_p)=z^p f(z)h_p,
\qquad f\in H^2(\D).
\]
With respect to the orthogonal decomposition $H^2_{\cle}(\D)=\displaystyle\bigoplus_{p=0}^\infty H^2(\D)\otimes h_p,$
we obtain
\[
M_\Theta=\bigoplus_{p=0}^\infty M_{z^p}.
\]
A straightforward computation shows $U_1 W_{22} U_n^*=M_{\Theta}.$ 
Theorem~\ref{thm:fock_classification} (1) guarantees $W_L$ is an isometry. Consequently, $\Theta$ must be inner by Theorem~\ref{thm:operator_characterization_symbol}. Indeed, for
\(\zeta\in\T\), the boundary value of \(\Theta\) is given by $\Theta(\zeta)h_p=\zeta^p h_p,
\, p\geq 0.$
Therefore, $\|\Theta(\zeta)h_p\|=\|h_p\|,
\, p\geq 0,$
and hence 
\[
\Theta(\zeta)^*\Theta(\zeta)=I_{\cle},
\qquad \zeta\in\T.
\]
\end{example}

\begin{example}
Let \(\cle\) be a Hilbert space, and let \(a_1,\dots,a_m\in \D\). Define the
finite Blaschke product
\[
B(z)=\prod_{j=1}^m \frac{z-a_j}{1-\overline{a_j}z},
\qquad z\in \D.
\]
Write its Taylor expansion at the origin as $B(z)=\sum_{r=0}^{\infty} b_r z^r,
\, b_r\in \mathbb C.$
Since \(B\in H^\infty(\D)\), we have $\sum_{r=0}^{\infty}|b_r|^2=\|B\|_{H^2}^2<\infty.$ For each \(\eta\in\cle\), the vectors \(\{e_1^r\otimes\eta:r\geq0\}\) are
mutually orthogonal, and the series
\(\sum_{r=0}^{\infty} b_r e_1^r\otimes\eta\) converges in norm in
\(\clf_n^2\otimes\cle\).
Hence we define
\(L:\cle\to \clf_n^2\otimes \cle\) by
\[
L\eta=\sum_{r=0}^{\infty} b_r\, e_1^r\otimes \eta,
\qquad \eta\in \cle.
\]
Moreover, \(L\in \clb(\cle,\clf_n^2\otimes \cle)\), as $\|L\eta\|^2
=
\sum_{r=0}^{\infty}|b_r|^2\|\eta\|^2
=
\|B\|_{H^2}^2\|\eta\|^2.$
For each \(r\geq 0\), the coefficient operators \(L_r\in \clb(\cle)\) are given by
\[
L_r
=
(\langle e_1^r,\cdot\rangle\otimes I_\cle)L
=
b_r I_\cle.
\]
Hence, the associated operator-valued symbol is
\[
\Theta(z)
=
\sum_{r=0}^{\infty} z^r L_r
=
\sum_{r=0}^{\infty} b_r z^r I_\cle
=
B(z)I_\cle.
\]
Since \(B\) is a finite Blaschke product, it is inner. Therefore
\[
\Theta(\zeta)^*\Theta(\zeta)
=
|B(\zeta)|^2 I_\cle
=
I_\cle
\quad \text{for a.e. } \zeta\in\T.
\]
Thus \(\Theta\) is an inner operator-valued function. Clearly \(L\cle\subseteq\clm^\perp\). Consequently, by
Theorem~\ref{thm:operator_characterization_symbol}, the corresponding
odometer map \(W_L\) is an isometry.
This yields a family of explicit isometric odometer maps parameterized by the
zeros \(a_1,\dots,a_m\) of the finite Blaschke product.
\end{example}

\begin{remark}

\begin{enumerate}
\item In the special case $m=1$, the Blaschke product reduces to $B(z)=\frac{z-a}{1-\overline a z}, \, a\in\D.$
A direct computation of its Taylor coefficients yields
\[
b_0=-a, \qquad b_r=(1-|a|^2)\overline a^{\,r-1}, \quad r\ge1.
\]
Define $L:\cle\to \clf_n^2\otimes \cle$ by
\[
L\eta=-a\,\Omega\otimes \eta+
(1-|a|^2)\sum_{r=1}^\infty \overline a^{\,r-1}\, e_1^{r}\otimes \eta.
\]
The associated analytic symbol $\Theta(z)=\frac{z-a}{1-\overline a z}\,I_{\cle}$ is inner and hence belongs to the Schur class. The corresponding odometer map $W_L$ is an isometry.
This provides a concrete illustration of Theorem~\ref{thm:operator_characterization_symbol}, showing that the isometric nature of $W_L$ is encoded by the inner Toeplitz symbol.

\item Consider Example~7.4 of \cite{mansi2025}, where $\omega=\frac{1-\sqrt5}{2},$ and
\(L:\mathbb C\to \clf_n^2\otimes \mathbb C\) is given by
\[
L(\lambda)=\lambda (\sum_{p=0}^{\infty} c_p e_1^p)\in \clm^\perp,
\]
where $c_0=\sqrt{\frac{2}{\sqrt5+3}},$ and $
c_p=c_0\,\omega^{p-1},\, p\geq 1.$
Since $c_0=-\omega=1-\omega^2,$
a straightforward computation yields
\[
\Theta(z)
=
c_0+\sum_{p=1}^{\infty}c_0\omega^{p-1}z^p
=
-\omega+(1-\omega^2)\sum_{p=1}^{\infty}\omega^{p-1}z^p
=
\frac{z-\omega}{1-\omega z}.
\]
\end{enumerate}
\end{remark}

\begin{example}
Let $\cle=\ell^2(\N)$ with its standard orthonormal basis
$\{h_k\}_{k=1}^\infty$, and let
$S\in\clb(\cle)$ denote the unilateral shift.
Define a bounded symbol $L:\cle\to \clf_n^2\otimes \cle$
by
\[
L\eta=\sum_{r=0}^\infty \frac{1}{2^{r+1}}\, e_1^{r}\otimes S^r\eta,
\qquad \eta\in\cle.
\]
Since $S$ is an isometry, we have
\[
\|L\eta\|^2
=
\sum_{r=0}^\infty \frac{1}{4^{r+1}}
\|S^r\eta\|^2
=
\left(\sum_{r=0}^\infty \frac{1}{4^{r+1}}\right)\|\eta\|^2
=
\frac13\|\eta\|^2.
\]
Hence $L$ is bounded.
The coefficient operators are given by
\[
L_r=\frac{1}{2^{r+1}}S^r,\qquad r\ge0.
\]
Consequently, the associated operator-valued symbol is
\[
\Theta(z)
=
\sum_{r=0}^\infty z^rL_r
=
\frac12\sum_{r=0}^\infty
\left(\frac z2 S\right)^r =
\frac12\left(I_{\cle}-\frac z2 S\right)^{-1},
\]
where $\|S\|=1$ and $|z|<1$ imply the series converges in operator norm.

Thus $\Theta\in H^\infty_{\clb(\cle)}(\D)$ is a
resolvent-type operator-valued analytic function, and the Toeplitz operator
$M_{\Theta}$ provides an explicit model in which the Hardy-space shift
interacts with the unilateral shift on $\ell^2(\N)$.
However, \(\Theta\) is not inner. Indeed, for \(\zeta\in\T\), the radial
boundary value satisfies $\Theta(\zeta)h_1
=
\sum_{r=0}^{\infty}
\frac{\zeta^r}{2^{r+1}}S^r h_1
=
\sum_{r=0}^{\infty}
\frac{\zeta^r}{2^{r+1}}h_{r+1}.$
Therefore
\[
\|\Theta(\zeta)h_1\|^2
=
\sum_{r=0}^{\infty}\frac{1}{4^{r+1}}
=
\frac13
\neq 1
=
\|h_1\|^2.
\]
Hence \(\Theta(\zeta)^*\Theta(\zeta)\neq I_{\cle}\). 
This provides an example of a Toeplitz operator that is not an isometry. consequently, by
Theorem~\ref{thm:operator_characterization_symbol}, \(W_L\) is not an isometry.
\end{example}

\section{The adjoint of the odometer map} \label{sec:The adjoint of odometer map}
To analyze the spectral properties and algebraic structure associated with the odometer map, explicit knowledge of the adjoint operator $W_L^*$ is required. While \cite[Proposition~4.1]{mansi2025} established an adjoint formula in the
isometric case, we derive here, by a direct inner-product computation, an
explicit formula for the adjoint of an arbitrary bounded odometer map \(W_L\). 
This general formula provides further insight into the structure of bounded
odometer maps. We first introduce some notation that will aid in the computation of the adjoint.

Every $e_\gamma \in \clf_n^2$ can be written uniquely as $e_\gamma
=e_1^{ p}\otimes e_{\gamma'},$ where $p \geq 0$ and either $|\gamma'|=0$ or $\gamma'_1 \neq 1$. In this case, we define $\gamma(1):=p$, and for $0 \leq m \leq p,$ we define 
\begin{equation}\label{eq:e_gamma_power_m}
  e_{\gamma^{(m)}}:=e_1^{p-m} \otimes e_{\gamma'}.  
\end{equation}
Let $\{h_s\}_{s\in \Lambda}$ be an orthonormal basis for $\cle$, 
where $\Lambda$ is finite or countable. 
For each \(q\in\Lambda\), let
\[
c^{h_q}_{\mu,s}
:=
\left\langle Lh_q, e_{\mu}\otimes h_s \right\rangle,
\qquad \mu\in F_n^+,\ s\in\Lambda.
\]
With respect to the orthonormal basis 
\(\{e_\mu\otimes h_s:\mu\in F_n^+,\ s\in\Lambda\}\) of 
\(\mathcal F_n^2\otimes\mathcal{E}\), we have the Fourier expansion
\begin{equation}\label{eqn-L-hq}
Lh_q
=
\sum_{\mu \in F_n^+,\, s \in \Lambda}
c^{h_q}_{\mu,s}\, e_{\mu}\otimes h_s.
\end{equation}
We first record the action of \(L^*\) on basis vectors.

\begin{lemma} \label{lemma:adjoint of L}
Let $L: \cle \to \clf_n^2 \otimes \cle$ be a bounded linear operator. 
Then, for every \(\mu\in F_n^+\) and \(s\in\Lambda\), we have
\begin{equation*}
L^*(e_\mu \otimes h_s) = \sum_{q \in \Lambda} \ol{c^{h_q}_{\mu,s}}\, h_q.
\end{equation*}

\end{lemma}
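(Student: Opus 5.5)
The plan is to identify $L^*(e_\mu\otimes h_s)$ through its Fourier coefficients with respect to the orthonormal basis $\{h_q\}_{q\in\Lambda}$ of $\cle$. Since $L^*(e_\mu\otimes h_s)\in\cle$, Parseval's identity gives
\[
L^*(e_\mu\otimes h_s)=\sum_{q\in\Lambda}\langle L^*(e_\mu\otimes h_s),h_q\rangle\, h_q,
\]
with the series converging in norm (unconditionally if $\Lambda$ is countable). It therefore suffices to compute each coefficient.

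By the defining property of the adjoint,
\[
\langle L^*(e_\mu\otimes h_s),h_q\rangle_{\cle}
=\ol{\langle h_q,L^*(e_\mu\otimes h_s)\rangle_{\cle}}
=\ol{\langle Lh_q,e_\mu\otimes h_s\rangle}
=\ol{c^{h_q}_{\mu,s}}.
\]
Here the inner product is taken linear in the first slot, which matches the convention in the definition of $c^{h_q}_{\mu,s}$ and in the expansion \eqref{eqn-L-hq}. Substituting this into the Parseval expansion gives the claimed formula.

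There is no real obstacle. The only things to check are that the conjugation is placed consistently with the convention chosen for $c^{h_q}_{\mu,s}$, and that the sum is well defined. Convergence of the sum is automatic: it is the orthonormal expansion of a vector of $\cle$, so $\sum_q|c^{h_q}_{\mu,s}|^2=\|L^*(e_\mu\otimes h_s)\|^2\le\|L\|^2<\infty$.
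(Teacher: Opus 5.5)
Your proposal is correct and follows essentially the same route as the paper: you identify each coefficient $\langle L^*(e_\mu\otimes h_s),h_q\rangle$ with $\ol{c^{h_q}_{\mu,s}}$ via the adjoint relation, then expand in the orthonormal basis $\{h_q\}_{q\in\Lambda}$. Your remarks on the inner-product convention and on convergence of the series are accurate additions that the paper leaves implicit.
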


\begin{proof}
For any $q \in \Lambda$, consider the Fourier series expansion \eqref{eqn-L-hq} of $L h_q$.
It follows that
\[
\ol{c^{h_q}_{\mu,s}}
=
\langle e_{\mu} \otimes h_s, Lh_q \rangle
=
\langle L^*(e_{\mu} \otimes h_s), h_q \rangle.
\]
Expanding $L^*(e_{\mu} \otimes h_s)$ with respect to the orthonormal basis $\{h_q\}_{q\in\Lambda}$ of $\cle$, we obtain
\[
L^*(e_{\mu} \otimes h_s)
=
\sum_{q \in \Lambda}
\langle L^*(e_{\mu} \otimes h_s), h_q \rangle h_q
=
\sum_{q \in \Lambda}
\ol{c^{h_q}_{\mu,s}}\, h_q.
\]
\end{proof}

\begin{theorem}\label{thm:general_adjoint} 
Let $W_L \in \clb(\clf_n^2 \otimes \cle)$ be the odometer map with symbol $L$. 
Then, for every \(\gamma\in F_n^+\) and \(l\in\Lambda\), we have
\begin{equation} \label{eq:general_adjoint}
\begin{split}
W_L^*(e_\gamma \otimes h_l)
&=
\chi_{\clm_0}(\gamma )\,(e_{\gamma-1} \otimes h_l)
+
\sum_{p=0}^{\gamma(1)} 
\sum_{q \in \Lambda}
\ol{c^{h_q}_{\gamma^{(p)},l}}
\, (e_n^p \otimes h_q) \\
&= \chi_{\clm_0}(\gamma )(e_{\gamma-1} \otimes h_l) +\sum^{\gamma(1)}_{p=0} e_{n}^p \otimes L^*(e_{\gamma^{(p)}}\otimes h_l),\\
    \end{split}
\end{equation}
where $\chi_{\clm_0}$ denotes the characteristic function of $\clm_0$.
\end{theorem}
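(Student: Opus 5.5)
We verify the formula by computing $\langle W_L^*(e_\gamma\otimes h_l),\, e_\mu\otimes h_q\rangle=\langle e_\gamma\otimes h_l,\, W_L(e_\mu\otimes h_q)\rangle$ for every basis vector $e_\mu\otimes h_q$, $\mu\in F_n^+$, $q\in\Lambda$. We then check that the right-hand side of \eqref{eq:general_adjoint} has the same inner products. Since $\{e_\mu\otimes h_q\}$ is an orthonormal basis and both sides are vectors in $\clf_n^2\otimes\cle$, this suffices. We first check that the right-hand side is a genuine vector: the $p$-sum is finite, and each $L^*(e_{\gamma^{(p)}}\otimes h_l)$ lies in $\cle$. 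The equality of the two displayed expressions is just Lemma~\ref{lemma:adjoint of L} applied with $\mu=\gamma^{(p)}$ and $s=l$.

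We split according to $\mu$, using $\clf_n^2\otimes\cle=\cln\oplus\cln^\perp$.

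\emph{Case $\mu\in\cln_0$.} Here $W_L(e_\mu\otimes h_q)=e_{\mu+1}\otimes h_q$ with $\mu+1\in\clm_0$, and the map $\mu\mapsto\mu+1$ is a bijection $\cln_0\to\clm_0$ with inverse $\gamma\mapsto\gamma-1$; this is the unitarity of $W_{11}$. Hence the inner product equals $\delta_{q,l}$ if $\gamma\in\clm_0$ and $\mu=\gamma-1$, and $0$ otherwise. On the right-hand side, the term $\chi_{\clm_0}(\gamma)(e_{\gamma-1}\otimes h_l)$ gives exactly this. The terms $e_n^p\otimes(\cdot)$ lie in $\cln^\perp$, so they contribute nothing against $e_\mu\otimes h_q\in\cln$.

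\emph{Case $\mu=n^p$, $p\ge0$.} Here $W_L(e_n^p\otimes h_q)=e_1^p\otimes Lh_q=(S_1^p\otimes I)Lh_q$. Expanding $Lh_q$ via \eqref{eqn-L-hq} gives
\[
\langle e_\gamma\otimes h_l, W_L(e_n^p\otimes h_q)\rangle=\sum_{\nu,s}\ol{c^{h_q}_{\nu,s}}\,\langle e_\gamma\otimes h_l,e_1^p\otimes e_\nu\otimes h_s\rangle.
\]
The key combinatorial point is that $e_\gamma=e_1^p\otimes e_\nu$ holds for some $\nu$ if and only if $p\le\gamma(1)$, in which case $\nu=\gamma^{(p)}$. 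This uses the unique factorization $e_\gamma=e_1^{\gamma(1)}\otimes e_{\gamma'}$ with $\gamma'_1\neq1$. So the inner product is $\ol{c^{h_q}_{\gamma^{(p)},l}}$ for $p\le\gamma(1)$ and $0$ otherwise. On the right-hand side, $e_{\gamma-1}\in\cln$ is orthogonal to $e_n^p\otimes h_q$. The double sum pairs with $e_n^p\otimes h_q$ to give $\ol{c^{h_q}_{\gamma^{(p)},l}}$ when $p\le\gamma(1)$, and $0$ otherwise, since distinct $p$ give orthogonal $e_n^p$. The two sides agree.

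The only real care point is the bookkeeping in the second case: making sure that $\gamma(1)$ is the correct cutoff, that the indexing of $\gamma^{(p)}$ matches \eqref{eq:e_gamma_power_m}, and that the conjugation convention agrees with Lemma~\ref{lemma:adjoint of L}. The edge case $n=1$, where $e_n=e_1$ and $\cln_0=\emptyset$, is handled by the same computation with only the second case present.
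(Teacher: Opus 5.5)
Your proposal is correct and follows essentially the same route as the paper. Both arguments compute the coefficients $\langle e_\gamma\otimes h_l, W_L(e_\mu\otimes h_q)\rangle$, split $\mu$ into $\cln_0$ and $\{n^p : p\ge 0\}$, use the factorization $e_\gamma=e_1^{\gamma(1)}\otimes e_{\gamma'}$ to get the cutoff $p\le\gamma(1)$, and then apply Lemma~\ref{lemma:adjoint of L} for the second form; the only difference is that you check the inner products of the candidate vector rather than expanding $W_L^*(e_\gamma\otimes h_l)$ directly, which is the same computation.
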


\begin{proof}
With respect to the orthonormal basis $\{e_\mu \otimes h_q : \mu \in F_n^+,\, q \in \Lambda\}$
of
$\clf_n^2 \otimes \cle$, we have
\begin{align*}
W_L^*(e_\gamma \otimes h_l)
=
\sum_{\mu \in F_n^+,\, q \in \Lambda}
\langle W_L^*(e_\gamma \otimes h_l),
e_\mu \otimes h_q \rangle
\, e_\mu \otimes h_q 
=
\sum_{\mu \in F_n^+,\, q \in \Lambda}
\langle e_\gamma \otimes h_l,
W_L(e_\mu \otimes h_q) \rangle
\, e_\mu \otimes h_q.
\end{align*}
Using the decomposition 
$\clf_n^2 \otimes \cle = \cln \oplus \cln^\perp$, 
we get
\begin{equation} \label{eq: adjoint}
   W_L^*(e_\gamma \otimes h_l)
=  \sum_{\mu \in \cln_0 , q \in \Lambda} \langle  e_\gamma \otimes h_l, W_L(e_{\mu} \otimes {h_q}) \rangle e_{\mu} \otimes {h_q} + \sum_{p \in \Z_+ , q \in \Lambda} \langle e_\gamma \otimes h_l,  W_L(e_{n}^p \otimes {h_q}) \rangle e_{n}^p \otimes {h_q}.
\end{equation}
We simplify the coefficients in the first summand. Since \(\mu\in\cln_0\), it follows from \eqref{eq: e_mu+1} that
\[
\langle e_\gamma \otimes h_l, W_L(e_\mu \otimes h_q) \rangle = \langle e_\gamma \otimes h_l, e_{\mu+1} \otimes h_q \rangle = \delta_{\gamma, \mu+1} \delta_{l, q}.
\]
Notice that $\mu+1 \in \clm_0.$ Therefore
if $e_\gamma \otimes h_l \in \clm^\perp$, then the above inner product vanishes. Therefore, the first summand reduces to
\[
\sum_{\mu \in \cln_0 , q \in \Lambda} \langle  e_\gamma \otimes h_l, W_L(e_{\mu} \otimes {h_q}) \rangle e_{\mu} \otimes {h_q}
= \chi_{\clm_0}(\gamma )(e_{\gamma-1} \otimes h_l).
\] 
To simplify the second summand, we first observe that
\begin{align*}
 W_L(e_n^{p} \otimes h_q) 
= e_1^{p} \otimes Lh_q 
 = e_1^{p} \otimes \sum_{\mu \in F_n^+ , s \in \Lambda} c^{h_q}_{\mu,s} e_{\mu}\otimes h_s
= \sum_{\mu \in F_n^+ , s \in \Lambda} c^{h_q}_{\mu,s} e_1^{ p} \otimes  e_{\mu}\otimes h_s.
\end{align*}
We can write $e_{\gamma}= e_1^{\gamma(1)} \otimes e_{\gamma'}.$ If $p > \gamma(1),$ then $\langle e_\gamma \otimes h_l,  e_1^{ p} \otimes  e_{\mu}\otimes h_s \rangle =0.$ So let $0 \leq p \leq \gamma(1).$
\begin{align*}
\langle e_\gamma \otimes h_l,  W_L(e_{n}^p \otimes {h_q}) \rangle
&= \sum_{\mu \in F_n^+ , s \in \Lambda} \ol{c^{h_q}_{\mu,s} } \langle e_\gamma \otimes h_l,  e_1^{ p} \otimes  e_{\mu}\otimes h_s \rangle\\
&=\sum_{\mu \in F_n^+ } \ol{c^{h_q}_{\mu,l} } \langle e_1^{\gamma(1)} \otimes e_{\gamma'},  e_1^{ p} \otimes  e_{\mu} \rangle\\
&=\sum_{\mu \in F_n^+ } \ol{c^{h_q}_{\mu,l} } \langle e_{\gamma^{(p)}},    e_{\mu} \rangle,\qquad [\text{by }\eqref{eq:e_gamma_power_m}]\\
&= \ol{c^{h_q}_{\gamma^{(p)},l} }.    
\end{align*}
Then
\begin{align*}
 \sum_{p \in \Z_+ , q \in \Lambda} \langle e_\gamma \otimes h_l,  W_L(e_{n}^p \otimes {h_q}) \rangle e_{n}^p \otimes {h_q} 
&=  \sum_{0 \leq p \leq \gamma(1), q \in \Lambda} \langle e_\gamma \otimes h_l,  W_L(e_{n}^p \otimes {h_q}) \rangle e_{n}^p \otimes {h_q}\\
&= \sum_{0 \leq p \leq \gamma(1), q \in \Lambda}  \ol{c^{h_q}_{\gamma^{(p)},l} }\,e_{n}^p \otimes {h_q}.\\
\end{align*}
Substituting these values in~\eqref{eq: adjoint}, we get
\begin{equation*} 
   W_L^*(e_\gamma \otimes h_l)
= \chi_{\clm_0}(\gamma )(e_{\gamma-1} \otimes h_l) +\sum_{0 \leq p \leq \gamma(1), q \in \Lambda}  \ol{c^{h_q}_{\gamma^{(p)},l} }\,e_{n}^p \otimes {h_q}.
\end{equation*}
Lemma \ref{lemma:adjoint of L} implies that
\[
\sum_{0 \leq p \leq \gamma(1), q \in \Lambda}  \ol{c^{h_q}_{\gamma^{(p)},l} }\,e_{n}^p \otimes {h_q}=\sum^{\gamma(1)}_{p=0} e_{n}^p \otimes   \sum_{q \in \Lambda}  \ol{c^{h_q}_{\gamma^{(p)},l} }\,{h_q}= \sum^{\gamma(1)}_{p=0} e_n^p \otimes L^*(e_{\gamma^{(p)}}\otimes h_l).
\]
Therefore, 
\begin{equation*} 
   W_L^*(e_\gamma \otimes h_l)
= \chi_{\clm_0}(\gamma )(e_{\gamma-1} \otimes h_l) +\sum^{\gamma(1)}_{p=0}  e_{n}^p \otimes L^*(e_{\gamma^{(p)}}\otimes h_l).
\end{equation*}
This completes the proof.
\end{proof}

We now record two useful consequences of the general adjoint formula.

\begin{corollary}\label{cor:WL_adjoint}
Let $W_L \in \clb(\clf_n^2 \otimes \cle)$ be the odometer map with symbol $L$. 
\begin{enumerate}
    \item For $2 \le k \le n$, $m \ge 1$, and $\eta \in \cle$,
    \begin{equation} \label{eq:adjoint_k}
W_L^*(e_k^{ m} \otimes \eta) = e_{k-1} \otimes e_k^{ (m-1)} \otimes \eta + \Omega \otimes L^*(e_k^{m} \otimes \eta).
\end{equation}
\item For any $f  \in \clm^{\perp},$
\[
W_L^* f = \sum_{p \ge 0} e_n^p \otimes L^* (S_1^{*p} \otimes I_\cle) f.
\]
\end{enumerate}
\end{corollary}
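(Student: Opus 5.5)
Both parts will be read off from Theorem~\ref{thm:general_adjoint} by specializing $\gamma$ and then extending by linearity and continuity. Since $W_L^*$ and $L^*$ are bounded and both sides are linear in $\eta$ (resp.\ $f$), it suffices to verify each identity for $\eta=h_l$ (resp.\ $f=e_1^m\otimes h_l$), where $\{h_l\}$ is the fixed orthonormal basis of $\cle$.

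For (1), we take $\gamma=k^m:=k\cdots k$ ($m$ letters) with $2\le k\le n$ and $m\ge1$. Since $k\ne1$, the decomposition $e_\gamma=e_1^{p}\otimes e_{\gamma'}$ has $p=\gamma(1)=0$ and $\gamma'=\gamma$, so $e_{\gamma^{(0)}}=e_\gamma=e_k^m$. Moreover $\gamma\in\clm_0$, and the first index with $\gamma_i\ne1$ is $i=1$. The definition of $e_{\gamma-1}$ then gives $e_n^{0}\otimes e_{k-1}\otimes e_k^{m-1}=e_{k-1}\otimes e_k^{m-1}$. Substituting into \eqref{eq:general_adjoint}, the sum over $p$ has the single term $p=0$, namely $\Omega\otimes L^*(e_k^m\otimes h_l)$. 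This is exactly \eqref{eq:adjoint_k} for $\eta=h_l$, and linearity and continuity extend it to all $\eta\in\cle$.

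For (2), we take $\gamma=1^m$, so that $e_\gamma=e_1^m$ with $m\ge0$. Then $\gamma\notin\clm_0$, so the characteristic-function term vanishes. Also $\gamma(1)=m$ and $e_{\gamma^{(p)}}=e_1^{m-p}$ for $0\le p\le m$. The formula becomes
\[
W_L^*(e_1^m\otimes h_l)=\sum_{p=0}^m e_n^p\otimes L^*(e_1^{m-p}\otimes h_l).
\]
We observe that $(S_1^{*p}\otimes I_\cle)(e_1^m\otimes h_l)$ equals $e_1^{m-p}\otimes h_l$ when $p\le m$ and $0$ when $p>m$. The right side therefore coincides with $\sum_{p\ge0}e_n^p\otimes L^*(S_1^{*p}\otimes I_\cle)(e_1^m\otimes h_l)$, and by linearity the identity holds on the span of $\{e_1^m\otimes h_l\}$, which is dense in $\clm^\perp$.

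The main obstacle is the passage to a general $f\in\clm^\perp$. There the right side is an infinite series, so we must check that the map $f\mapsto\sum_{p\ge0}e_n^p\otimes L^*(S_1^{*p}\otimes I_\cle)f$ converges and is bounded on $\clm^\perp$. The summands lie in the mutually orthogonal subspaces $e_n^p\otimes\cle$, so it suffices to show that $\sum_p\|L^*(S_1^{*p}\otimes I)f\|^2\le C\|f\|^2$. The cleanest route is to bypass this estimate. For finitely supported $f=\sum_m e_1^m\otimes\eta_m$ the formula holds, and it identifies the $e_n^p\otimes\cle$ component of $W_L^*f$ as $L^*(S_1^{*p}\otimes I)f$. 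Both $f\mapsto P_{e_n^p\otimes\cle}W_L^*f$ and $f\mapsto e_n^p\otimes L^*(S_1^{*p}\otimes I)f$ are bounded, so they agree for every $f\in\clm^\perp$. Hence the series is precisely the orthogonal expansion of $P_{\cln^\perp}W_L^*f$, which converges with norm at most $\|W_L^*\|\|f\|$.

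It remains to show that $W_L^*f$ has no component in $\cln$. For basis vectors this is immediate, since the $\chi_{\clm_0}$ term vanishes; as $P_{\cln}W_L^*$ is bounded, the vanishing extends by density to all $f\in\clm^\perp$. Together these identify $W_L^*f$ with the stated series, which completes the plan.
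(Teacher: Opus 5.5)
Your proposal is correct and follows the paper's proof. Both parts come from specializing Theorem~\ref{thm:general_adjoint}, and (2) is first checked on finite combinations of $e_1^m\otimes h_l$ and then extended by density. Your componentwise argument makes one step explicit that the paper only asserts when it claims norm convergence of the series. You match the bounded maps $f\mapsto P_{e_n^p\otimes\cle}W_L^*f$ and $f\mapsto e_n^p\otimes L^*(S_1^{*p}\otimes I_\cle)f$ on a dense set, and you show $P_{\cln}W_L^*f=0$, which together justify that convergence.
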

\begin{proof}
    \begin{enumerate}
        \item Here $e_{\gamma}=e_k^{ m}$. Since $k \ge 2$, we have $\chi_{\clm_0}(\gamma) = 1$, so the first summand in~\eqref{eq:general_adjoint} becomes 
$e_{k-1} \otimes e_k^{(m-1)} \otimes \eta$. 
For the second summand, since \(\gamma(1)=0\), only the term \(p=0\) occurs.
\[
\sum_{0 \leq p \leq \gamma(1)}  e_{n}^p \otimes L^*(e_{\gamma^{(p)}}\otimes \eta)=\Omega \otimes L^*(e_k^{ m} \otimes \eta),
\]
which completes the proof.

\item 
First suppose that \(f\in\clm^\perp\) is a finite linear combination, say $f=\sum
a_{m,l}\,e_1^m\otimes h_l.$ By Theorem~\ref{thm:general_adjoint}, we have
\[
W_L^*f
=
\sum_{m,l} a_{m,l}\,W_L^*(e_1^m\otimes h_l)
=
\sum_{m,l} a_{m,l}
\sum_{p=0}^{m}
e_n^p\otimes L^*(e_1^{m-p}\otimes h_l).
\]
Reindexing these finite sums according to \(p\), we obtain
\[
W_L^*f
=
\sum_{p\geq 0}
e_n^p\otimes
\left(
\sum_{m\geq p}\sum_l
a_{m,l}\,L^*(e_1^{m-p}\otimes h_l)
\right).
\]
For any $p \ge 0$,
\[
L^*(S_1^{*p} \otimes I_\cle) f = L^*\big(\sum_{m \ge p} \sum_l a_{m,l} \, e_1^{m-p} \otimes h_l\big)
= \sum_{m \ge p} \sum_l a_{m,l} \, L^*(e_1^{m-p} \otimes h_l).
\]
Therefore
\[
W_L^* f = \sum_{p \ge 0} e_n^p \otimes L^* (S_1^{*p} \otimes I_\cle) f.
\]
Since the identity holds on a dense subspace and \(W_L^*\) is bounded, the
series on the right converges in norm to \(W_L^*f\) for every
\(f\in\clm^\perp\).
    \end{enumerate}
\end{proof}

Using Corollary~\ref{cor:WL_adjoint}, we recover the adjoint formula from
\cite[Proposition~4.1]{mansi2025} in the isometric case.
First, in view of the expansion \eqref{eqn-L-hq}, we adopt the following notation. For $p \ge 0$, we set
\[
c^{h_q}_{p,s} := \langle Lh_q,\, e_1^{ p} \otimes h_s \rangle.
\]

\begin{corollary}\label{cor:isometric_adjoint}
For 
$L \in \clb(\cle, \clf_n^2 \otimes \cle)$, 
if the odometer map $W_L$ is an isometry, then the adjoint of $W_L$ is given by

\[
W_L^* f =
\begin{cases}
\displaystyle
\sum_{p=0}^{m} \sum_{q \in \Lambda} 
\overline{c^{h_q}_{m-p,l}}\, (e_n^p \otimes h_q)=\sum^m_{p=0}  e_{n}^p \otimes L^*(e_{1}^{m-p}\otimes h_l),\quad
\text{if } f = e_1^m \otimes h_l, \\[2mm]
e_n^m \otimes e_{\mu_1-1} \otimes e_{\mu_2} \otimes \cdots \otimes e_{\mu_k} \otimes h_l,\quad 
\text{if } f = e_1^m \otimes e_{\mu_1} \otimes \cdots \otimes e_{\mu_k} \otimes h_l,\, k \geq 1, \text{ and } \mu_1 > 1,
\end{cases}
\]
for all $m \in \Z_+$ and $l \in \Lambda$.
\end{corollary}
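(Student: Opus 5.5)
The plan is to specialize the general adjoint formula of Theorem~\ref{thm:general_adjoint}, using that isometry forces the symbol to live in $\clm^\perp$. First I would invoke Theorem~\ref{thm:operator_characterization_symbol}(1): since $W_L$ is an isometry, $L\cle\subseteq\clm^\perp$. Consequently, for every $q,s$ and every word $\mu$ not of the form $e_1^r$, the coefficient $c^{h_q}_{\mu,s}=\langle Lh_q,e_\mu\otimes h_s\rangle$ vanishes. Equivalently, $L^*$ annihilates $\clm$, because $\langle L^*x,\eta\rangle=\langle x,L\eta\rangle=0$ for $x\in\clm$. This single observation drives both cases of the formula.

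For the first case, $f=e_1^m\otimes h_l$ lies in $\clm^\perp$, with $\gamma=1^m$. Then $\chi_{\clm_0}(\gamma)=0$, $\gamma(1)=m$ and $e_{\gamma^{(p)}}=e_1^{m-p}$. Theorem~\ref{thm:general_adjoint}, or directly Corollary~\ref{cor:WL_adjoint}(2), gives
\[
W_L^*f=\sum_{p=0}^m e_n^p\otimes L^*(e_1^{m-p}\otimes h_l).
\]
Expanding $L^*(e_1^{m-p}\otimes h_l)$ via Lemma~\ref{lemma:adjoint of L} produces the coefficients $\ol{c^{h_q}_{m-p,l}}$ in the shorthand $c^{h_q}_{p,s}$. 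This step needs no isometry hypothesis.

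For the second case, $f=e_1^m\otimes e_{\mu_1}\otimes\cdots\otimes e_{\mu_k}\otimes h_l$ with $\mu_1>1$. Here $\gamma\in\clm_0$, the first index different from $1$ is $k_0=m+1$, and $\gamma(1)=m$. By the definition of $e_{\gamma-1}$,
\[
e_{\gamma-1}=e_n^m\otimes e_{\mu_1-1}\otimes e_{\mu_2}\otimes\cdots\otimes e_{\mu_k}.
\]
This is the first term of \eqref{eq:general_adjoint}. The remaining sum has terms $e_n^p\otimes L^*(e_1^{m-p}\otimes e_{\mu}\otimes h_l)$ for $0\le p\le m$. Each vector $e_1^{m-p}\otimes e_\mu\otimes h_l$ lies in $\clm$, since it contains the letter $\mu_1\neq1$. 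By the first step $L^*$ kills it, so the sum vanishes and the stated formula follows.

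The only point needing care is justifying $L^*|_{\clm}=0$, which rests entirely on the earlier isometry characterization $L\cle\subseteq\clm^\perp$. Everything else is bookkeeping of the indices $\gamma(1)$ and $\gamma^{(p)}$. One also checks that the two cases exhaust the basis: words with $\gamma'=\emptyset$ fall in the first case and those with $\gamma'_1>1$ in the second. Linearity and boundedness then determine $W_L^*$ everywhere.
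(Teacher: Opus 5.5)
Your proof is correct. For $f=e_1^m\otimes h_l$ you argue exactly as the paper does: Corollary~\ref{cor:WL_adjoint}(2) together with Lemma~\ref{lemma:adjoint of L}, and you rightly note that no isometry hypothesis is needed there.

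In the second case the routes differ. The paper does not use Theorem~\ref{thm:general_adjoint} there. It observes that $f=W_L(e_n^m\otimes e_{\mu_1-1}\otimes e_{\mu_2}\otimes\cdots\otimes e_{\mu_k}\otimes h_l)$ and applies $W_L^*W_L=I$ to this explicit preimage. You instead specialize the general adjoint formula and kill the terms $e_n^p\otimes L^*(e_1^{m-p}\otimes e_\mu\otimes h_l)$ using $L\cle\subseteq\clm^\perp$ from Theorem~\ref{thm:operator_characterization_symbol}(1). That theorem is proved by essentially the same preimage argument, so your route is less direct as a proof of the stated corollary.

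What your version buys is that it isolates the hypothesis actually needed. The displayed formula holds for every symbol with $L\cle\subseteq\clm^\perp$, i.e.\ $W_{12}=0$, whether or not $W_L$ is an isometry. Moreover, since the subspaces $e_n^p\otimes\cle$ are mutually orthogonal, this condition is also necessary for the second case of the formula to hold for all $f$ of that form.
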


\begin{proof}
Let $f = e_1^m \otimes h_l$. Since $(S_1^{*p} \otimes I_\cle) f=0, \, \forall p \geq m+1,$
Corollary~\ref{cor:WL_adjoint} gives 
\[
W_L^*(e_1^m \otimes h_l)
=\sum^m_{p=0}  e_{n}^p \otimes L^*(e_{1}^{m-p}\otimes h_l).
\]
Lemma \ref{lemma:adjoint of L} implies $L^*(e_{1}^{m-p}\otimes h_l) = \sum_{q \in \Lambda} \ol{c^{h_q}_{{m-p},l}}\, h_q$, and hence
\[
W_L^*(e_1^m \otimes h_l)
=\sum^m_{p=0}  e_{n}^p \otimes \sum_{q \in \Lambda} \ol{c^{h_q}_{{m-p},l}}\, h_q 
= \sum^m_{p=0} \sum_{q \in \Lambda} \ol{c^{h_q}_{{m-p},l}}\, ( e_{n}^p \otimes  h_q).
\]
Next, let $f = e_1^m \otimes e_{\mu_1}\otimes\cdots\otimes e_{\mu_k}\otimes h_l \text{ with } \mu_1>1.$
Then $W_L(e_n^m \otimes e_{\mu_1-1}\otimes\cdots\otimes e_{\mu_k} \otimes h_l)=f,$ and as $W_L$ is an isometry, the 
equality follows. 
\end{proof}

\section{Applications of the adjoint formula}\label{sec:applications_adjoint}
In this section, we use the explicit formula for \(W_L^*\) to derive several
operator-theoretic consequences for odometer maps.
We first identify the kernel of $W_L^*$ and relate it to the defect space $\cle_L\ominus L\cle$ in the isometric case. This leads to consequences for the Wold decomposition,
Fredholmness, essential normality, and a Coburn-type spectral theorem. We end with a necessary condition for hyponormality, showing that this condition is not sufficient in general.
\subsection{Kernel structure and Wold decomposition}\label{sec:Kernel}
The kernel of $W_L^*$ plays a central role in the structural analysis of the odometer map. We compute $\ker W_L^*$ explicitly and show that, in the isometric case, this adjoint kernel precisely measures the orthogonal gap between the space $\cle_L$ and the range of the symbol $L$.
Equivalently, the defect space \(\cle_L\ominus L\cle\) is the wandering
subspace for the shift part in the Wold decomposition of \(W_L\). Hence its
dimension gives the multiplicity of the shift summand and, in the
Fredholm setting, determines the Fredholm index of \(W_L\). This parallels classical Toeplitz theory, where the Fredholm index of a
Fredholm Toeplitz operator is encoded by the topological data of its symbol.

\begin{proposition}\label{thm:kernel_adjoint}
Let $W_L \in \clb(\clf_n^2 \otimes \cle)$ be an odometer map with symbol $L$. Then the kernel of its adjoint is given by
\[
\ker W_L^* = \ker W_{22}^*=\{ f \in \clm^\perp : L^* (S_1^{*p} \otimes I_\cle) f = 0 \text{ for all } p \ge 0 \} 
= \cle_L \cap (L\cle)^\perp.
\]
Moreover, if $(W_L, S^\cle)$ is an isometric representation, then 
\[
\ker W_L^* = \cle_L \ominus L\cle.
\]
\end{proposition}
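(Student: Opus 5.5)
We will use the block decomposition \eqref{eq:matrix representation}. Since $W_L=\begin{pmatrix}W_{11}&W_{12}\\0&W_{22}\end{pmatrix}$ maps $\cln\oplus\cln^\perp$ to $\clm\oplus\clm^\perp$, the adjoint is
\[
W_L^*=\begin{pmatrix}W_{11}^*&0\\ W_{12}^*&W_{22}^*\end{pmatrix}:\clm\oplus\clm^\perp\to\cln\oplus\cln^\perp .
\]
Writing $f=g\oplus h$ with $g\in\clm$ and $h\in\clm^\perp$, the condition $W_L^*f=0$ gives $W_{11}^*g=0$. Because $W_{11}$ is unitary, this forces $g=0$, and then the condition reduces to $W_{22}^*h=0$. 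Hence $\ker W_L^*=\ker W_{22}^*\subseteq\clm^\perp$, where $W_{22}^*$ is regarded as acting on $\clm^\perp$.

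Next, for $f\in\clm^\perp$, Corollary~\ref{cor:WL_adjoint}(2) gives
\[
W_L^*f=\sum_{p\ge0}e_n^p\otimes L^*(S_1^{*p}\otimes I_\cle)f .
\]
The summands lie in the mutually orthogonal subspaces $e_n^p\otimes\cle$, so $W_L^*f=0$ exactly when $L^*(S_1^{*p}\otimes I_\cle)f=0$ for every $p\ge0$. This is the second description of the kernel.

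For the third description, we rewrite $L^*(S_1^{*p}\otimes I_\cle)f=0$ for all $p$ as $\langle f,(S_1^p\otimes I_\cle)L\zeta\rangle=0$ for all $p\ge0$ and $\zeta\in\cle$, using $(S_1^p\otimes I_\cle)L\zeta=e_1^p\otimes L\zeta$. The terms with $p\ge1$, together with $f\in\clm^\perp=\ol{\rm span}\{e_1^m\otimes\eta\}$, say precisely that $f\in\cle_L$ by \eqref{eq:EL}. The term $p=0$ says $f\perp L\cle$. Conversely, $\cle_L\subseteq\clm^\perp$ holds automatically. Together these give $\ker W_L^*=\cle_L\cap(L\cle)^\perp$.

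In the isometric case, Theorem~\ref{thm:fock_classification}(1) gives $L\cle\subseteq\cle_L$. We will check that $L\cle$ is closed: it is the range of the isometry $L$. Therefore $\cle_L\cap(L\cle)^\perp=\cle_L\ominus L\cle$.

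The main obstacle is justifying the termwise orthogonality argument. For this we rely on the norm convergence of the series in Corollary~\ref{cor:WL_adjoint}(2). We also need to keep track that the adjoint of the upper triangular matrix is taken with respect to the swapped decompositions, namely from $\clm\oplus\clm^\perp$ to $\cln\oplus\cln^\perp$.
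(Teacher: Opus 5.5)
Your proposal is correct and follows essentially the same route as the paper. Both arguments use the lower-triangular form of $W_L^*$ from $\clm\oplus\clm^\perp$ to $\cln\oplus\cln^\perp$ and the unitarity of $W_{11}$ to get $\ker W_L^*=\ker W_{22}^*\subseteq\clm^\perp$, then Corollary~\ref{cor:WL_adjoint}(2) with the mutual orthogonality of the spaces $e_n^p\otimes\cle$, then the rewriting $f\perp(S_1^p\otimes I_\cle)L\cle$ to reach $\cle_L\cap(L\cle)^\perp$, and finally closedness of $L\cle\subseteq\cle_L$ in the isometric case. Like the paper, you read $\ominus$ in the definition of $\cle_L$ as $\clm^\perp\cap\{e_1^p\otimes L\zeta:\ p\ge1,\ \zeta\in\cle\}^\perp$, which is the right reading when $L\cle\not\subseteq\clm^\perp$.
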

\begin{proof}
From the block decomposition of $W_L$ in \eqref{eq:matrix representation}, the adjoint $W_L^*:\clm\oplus \clm^\perp \to \cln\oplus \cln^\perp$
is given by
\[
W_L^*=
\begin{pmatrix}
W_{11}^* & 0\\
W_{12}^* & W_{22}^*
\end{pmatrix}.
\]
Let $(h_1,h_2)\in \clm\oplus \clm^\perp$ satisfy $W_L^*(h_1,h_2)=0.$
Then
\[
W_{11}^*h_1=0,\qquad
W_{12}^*h_1+W_{22}^*h_2=0.
\]
Since $W_{11}:\cln\to \clm$ is unitary, its adjoint $W_{11}^*:\clm\to \cln$
is also unitary. Therefore, $h_1=0,$
which further implies $W_{22}^*h_2=0.$
Thus
\[
\ker W_L^*=\{(0,h_2): h_2\in \ker W_{22}^*\}.
\]
Identifying $\{0\}\oplus \clm^\perp$ with $\clm^\perp$, we obtain
\[
\ker W_L^*=\ker W_{22}^*\subseteq \clm^\perp.
\]
Now let $f\in \ker W_L^*$. Since $\ker W_L^*\subseteq \clm^\perp$, Corollary~\ref{cor:WL_adjoint} yields
\[
0=W_L^*f=\sum_{p\ge 0} e_n^p\otimes L^*(S_1^{*p}\otimes I_\cle)f.
\]
As the subspaces $e_n^p\otimes \cle$, $p\ge 0$, are mutually orthogonal, it follows that
\[
L^*(S_1^{*p}\otimes I_\cle)f=0
\qquad \text{for all } p\ge 0.
\]
Hence
\[
\ker W_L^*
\subseteq
\{ f \in \clm^\perp : L^*(S_1^{*p}\otimes I_\cle)f=0
\text{ for all } p\ge 0\}.
\]
Conversely, let $f \in \{ g \in \clm^\perp : L^*(S_1^{*p}\otimes I_\cle)g=0
\text{ for all } p\ge 0\}.$
Since $f\in \clm^\perp$, Corollary~\ref{cor:WL_adjoint} implies $W_L^*f=
\sum_{p\ge 0} e_n^p\otimes L^*(S_1^{*p}\otimes I_\cle)f=0.$
Therefore
\[
\ker W_L^*
=
\{ f \in \clm^\perp : L^*(S_1^{*p}\otimes I_\cle)f=0
\text{ for all } p\ge 0\},
\]
proving the required equality. This is equivalent to 
\[
\langle L^*(S_1^{*p}\otimes I_\cle)f,\zeta\rangle_\cle=
\langle f,(S_1^p\otimes I_\cle)L\zeta\rangle_{\clf_n^2\otimes\cle}=0, \quad \forall\,\zeta \in \cle,\, p\ge0.
\]
For $p=0$, this gives $f\perp L\cle$. Therefore, $f \in (L\cle)^\perp.$
For each $p \ge 1$, it follows that $f \perp (S_1^{p}\otimes I_\cle)L\cle.$
Moreover, since $f \in \clm^\perp$, the definition of $\cle_L$ in \eqref{eq:EL} implies
\[
\ker W_L^* \subseteq \cle_L \cap (L\cle)^\perp.
\]

Conversely, if $f \in \cle_L \cap (L\cle)^\perp$, then by definition $f \perp (S_1^{p}\otimes I_\cle)L\cle$
for all $p \ge 0.$ Therefore, Corollary~\ref{cor:WL_adjoint} implies $ W_L^*f=0,$ and hence
\[
\ker W_L^* = \cle_L \cap (L\cle)^\perp.
\]

Finally, suppose $W_L$ is an isometry. Theorem~\ref{thm:fock_classification} implies $L$ is an isometry and $L\cle \subseteq \cle_L.$ Therefore, $L\cle$ is a closed subspace of $\cle_L$. Consequently, 
\[
\ker W_L^* =\cle_L \ominus L\cle.
\] 
\end{proof}

The following example illustrates the preceding proposition by showing that, for an isometric odometer map, the multiplicity of the shift part of $W_L$ is determined by the dimension of the defect space $\cle_L\ominus L\cle$.

\begin{example}\label{rem:wold_and_mult}
Let
\(L:\cle\to \clf_n^2\otimes\cle\) be given by
\[
L\eta=e_1\otimes \eta,\qquad \eta\in\cle.
\]
Then, for \(p\geq 0\), $W_L(e_n^p\otimes \eta)
=
e_1^p\otimes L\eta
=
e_1^{p+1}\otimes \eta.$
Hence the range of \(W_L\) in \(\clm^\perp\) is $\overline{\operatorname{span}}
\{e_1^m\otimes \eta:m\geq 1,\ \eta\in\cle\}.$
Then \(W_L\) is an isometry, and
\[
\ker W_L^*
=
(\operatorname{Ran}W_L)^\perp
=
\Omega\otimes\cle.
\]
For \(p\geq 1\) and \(\eta\in\cle\), we have $(S_1^p\otimes I_\cle)L\eta
=
e_1^{p+1}\otimes\eta.$
Hence $\overline{\operatorname{span}}
\{(S_1^p\otimes I_\cle)L\eta:p\geq 1,\ \eta\in\cle\}
=
\bigoplus_{j\geq 2} e_1^j\otimes\cle.$
Therefore
\[
\cle_L
=
\clm^\perp
\ominus
\overline{\operatorname{span}}
\{(S_1^p\otimes I_\cle)L\eta:p\geq 1,\ \eta\in\cle\}
=
(\Omega\otimes\cle)\oplus(e_1\otimes\cle).
\]
Consequently,
\[
\cle_L\ominus L\cle
=
\big((\Omega\otimes\cle)\oplus(e_1\otimes\cle)\big)
\ominus
(e_1\otimes\cle)
=
\Omega\otimes\cle.
\]
Hence, 
\[
\ker W_L^*=\cle_L\ominus L\cle, \text{ and } \dim\ker W_L^*
=
\dim\cle.
\]
Furthermore, using \eqref{eq:defn-W_L}, one checks that $\clf_n^2\otimes\cle
=
\bigoplus_{m\geq 0} W_L^m(\ker W_L^*).$
Therefore, $W_L\cong M_z$ on $H^2_\cle(\D)$.
Consequently, $\sigma(W_L) = \sigma(M_z) = \ol{\D}.$

More generally, fix \(k\geq 1\), and let
\(L:\cle\to \clf_n^2\otimes\cle\) be given by $L\eta=e_1^k\otimes \eta,\, \eta\in\cle.$
Then for $p\geq 1$, $\overline{\operatorname{span}}
\{(S_1^p\otimes I_\cle)L\eta:p\geq 1,\eta\in\cle\}
=
\bigoplus_{j\geq k+1} e_1^j\otimes \cle,$ and hence
\[
\cle_L
=
\clm^\perp
\ominus
\overline{\operatorname{span}}
\{(S_1^p\otimes I_\cle)L\eta:p\geq 1,\eta\in\cle\}
=
\bigoplus_{j=0}^{k} e_1^j\otimes \cle.
\]
Consequently, 
$$\ker W_L^*=\cle_L\ominus L\cle=
\bigoplus_{j=0}^{k-1} e_1^j\otimes \cle,\, \text{ and } \dim\ker W_L^*
=
k\dim\cle.$$  
Again, using \eqref{eq:defn-W_L}, one checks that \(W_L\) is a pure isometry
and is unitarily equivalent to the unilateral shift of multiplicity
\(k\dim\cle\).
\end{example}

We next obtain another characterization of the unitarity of \(W_L\), which recovers \cite[Theorem~4.4]{mansi2025} as a direct consequence of the preceding proposition.

\begin{corollary}\label{cor:unitary_WL}
Let $(W_L,S^\cle)$ be an isometric Fock representation of $\clo_n$. Then the
odometer map $W_L$ is unitary if and only if $L\cle=\Omega\otimes\cle.$
\end{corollary}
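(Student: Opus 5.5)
Since $W_L$ is an isometry, it is unitary exactly when it is surjective, that is, exactly when $\ker W_L^*=(\ran W_L)^\perp=\{0\}$. By Proposition~\ref{thm:kernel_adjoint}, the isometric hypothesis gives $\ker W_L^*=\cle_L\ominus L\cle$. So the corollary reduces to the following equivalence: for an isometric symbol $L$ with $L\cle\subseteq\cle_L$ (Theorem~\ref{thm:fock_classification}(1)), we have $\cle_L=L\cle$ if and only if $L\cle=\Omega\otimes\cle$. Note that $L\cle$ is closed because $L$ is an isometry.

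For the implication "$\Leftarrow$", assume $L\cle=\Omega\otimes\cle$. Then
\[
\ol{\rm span}\{e_1^p\otimes L\zeta:p\ge1,\ \zeta\in\cle\}=\bigoplus_{p\ge1}e_1^p\otimes\cle .
\]
Subtracting this from $\clm^\perp=\bigoplus_{m\ge0}e_1^m\otimes\cle$ gives $\cle_L=\Omega\otimes\cle=L\cle$. Hence $\cle_L\ominus L\cle=\{0\}$, and $W_L$ is unitary.

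For the implication "$\Rightarrow$", assume $\cle_L=L\cle$. The idea is to use the Wold-type decomposition of $\clm^\perp$ under the isometry $T:=S_1\otimes I_\cle$ restricted to $\clm^\perp$, which is a unilateral shift with wandering space $\Omega\otimes\cle$. Set $\clk:=\ol{\rm span}\{T^pL\cle:p\ge0\}$. The hypothesis $L\cle\subseteq\cle_L$ gives $L\cle\perp T^pL\cle$ for all $p\ge1$. Since $T$ is an isometry, the subspaces $T^pL\cle$, $p\ge 0$, are then mutually orthogonal, so $\clk=\bigoplus_{p\ge0}T^pL\cle$. By definition,
\[
\cle_L=\clm^\perp\ominus T\clk .
\]
Combining this with $\clk=L\cle\oplus T\clk$, the assumption $\cle_L=L\cle$ yields $\clm^\perp=L\cle\oplus T\clk=\clk$. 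Thus $\clm^\perp=\bigoplus_{p\ge0}T^p(L\cle)$, which says that $L\cle$ is also a wandering subspace generating $\clm^\perp$ for the shift $T$. Therefore
\[
L\cle=\clm^\perp\ominus T\clm^\perp=\Omega\otimes\cle,
\]
because the wandering subspace of a pure isometry is uniquely determined as $\ker T^*$.

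The main point to check carefully is this last step. It requires verifying that $\clm^\perp=\clk$ implies $T\clm^\perp=T\clk$, which is immediate, and that $\clk\ominus T\clk=L\cle$, which follows from the orthogonal decomposition $\clk=\bigoplus_{p\ge0}T^pL\cle$. Together these give $\Omega\otimes\cle=\clm^\perp\ominus T\clm^\perp=\clk\ominus T\clk=L\cle$, completing the proof.
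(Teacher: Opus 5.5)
Your proposal is correct and follows essentially the same route as the paper. It uses Proposition~\ref{thm:kernel_adjoint} to reduce the claim to $\cle_L=L\cle \iff L\cle=\Omega\otimes\cle$, and computes $\cle_L$ directly for the reverse implication. For the forward implication it shows that $L\cle$ is a generating wandering subspace for $\restr{S_1\otimes I_\cle}{\clm^\perp}$ and compares it with $\Omega\otimes\cle$; the only difference is that you make the paper's appeal to uniqueness of the wandering subspace explicit via $\clk\ominus T\clk=L\cle$ and $\clm^\perp\ominus T\clm^\perp=\Omega\otimes\cle$.
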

\begin{proof}
Assume first that \(W_L\) is unitary. Then $\ker W_L^*=\{0\}.$
By Proposition~\ref{thm:kernel_adjoint}, this is equivalent to $\cle_L=L\cle.$ By the definition of
$\cle_L$, we obtain the orthogonal sum 
\[
\clm^\perp
=
L\cle
\oplus
\overline{\operatorname{span}}
\{(S_1^p\otimes I_\cle)L\zeta:\ p\ge1,\ \zeta\in\cle\}
=
\bigoplus_{p\ge0}(S_1^p\otimes I_\cle)L\cle,
\]
as for $q>p\ge0$,  $L\cle=\cle_L \perp \overline{\operatorname{span}}
\{(S_1^r\otimes I_\cle)L\eta:\ r\ge1,\ \eta\in\cle\}$ implies
\[
\left\langle
(S_1^p\otimes I_\cle)L\zeta,
(S_1^q\otimes I_\cle)L\eta
\right\rangle
=
\left\langle
L\zeta,
(S_1^{q-p}\otimes I_\cle)L\eta
\right\rangle
=0.
\]
Thus $L\cle$ is a generating wandering subspace for
$\restr{S_1\otimes I_\cle}{\clm^\perp}$. On the other hand, $\restr{S_1\otimes I_\cle}{\clm^\perp}$
is the unilateral shift on $\clm^\perp,$
whose wandering subspace is precisely
\[
\ker\left(\restr{(S_1\otimes I_\cle)^*}{\clm^\perp}\right)
=
\Omega\otimes\cle.
\]
By uniqueness of the wandering subspace in the Wold decomposition, we obtain
\[
L\cle=\Omega\otimes\cle.
\]

Conversely, suppose that $L\cle=\Omega\otimes\cle$. Then $\overline{\operatorname{span}}
\{(S_1^p\otimes I_\cle)L\cle:\ p\ge1\}
=
\overline{\operatorname{span}}\{e_1^p\otimes\cle:\ p\ge1\}.$
Hence, by the definition of $\cle_L$,
\[
\cle_L
=
\clm^\perp
\ominus
\overline{\operatorname{span}}\{e_1^p\otimes\cle:\ p\ge1\}
=
\Omega\otimes\cle
=
L\cle.
\]
Thus $\ker W_L^*
=
\cle_L\ominus L\cle
=
\{0\}.$
Since $W_L$ is an isometry, it follows that $W_L$ is unitary.
\end{proof}

By Proposition~\ref{thm:kernel_adjoint}, the shift part in the Wold decomposition of an isometric map \(W_L\) on
\(\clf_n^2\otimes\cle\) is given by
\[
\clh_s
=
\bigoplus_{k=0}^{\infty} W_L^k(\ker W_L^*)
=
\bigoplus_{k=0}^{\infty} W_L^k(\cle_L\ominus L\cle).
\]
Moreover, since \(W_L\) is an isometry, Theorem~\ref{thm:operator_characterization_symbol}
implies that the associated multiplication operator
\(M_\Theta\in\clb(H^2_\cle(\D))\) is also an isometry. Its corresponding
pure shift part is therefore
\[
\clh_s^\Theta
=
\bigoplus_{k=0}^{\infty} M_\Theta^k(\ker M_\Theta^*).
\]
The following result shows that the multiplicity of the shift part of \(W_L\)
is encoded by the associated analytic Toeplitz operator \(M_\Theta\).

\begin{theorem}\label{thm:wold_decomposition}
Let $(W_L, S^\cle)$ be an isometric representation. Then 
\[
\ker M_\Theta^*=U_1(\cle_L \ominus L\cle),\, \text{and }
\operatorname{mult}(W_L)=\operatorname{mult}(M_\Theta).
\]
\end{theorem}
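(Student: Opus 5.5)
Since $W_L$ is an isometry, Theorem~\ref{thm:operator_characterization_symbol}(1) gives $L\cle\subseteq\clm^\perp$, so $W_{12}=0$ and $W_L=W_{11}\oplus W_{22}$ with respect to $\cln\oplus\cln^\perp\to\clm\oplus\clm^\perp$. The plan is to transport the kernel of $W_L^*$ through this decomposition and the unitaries $U_1,U_n$ of Theorem~\ref{prop:hardy_space_multiplier}, and then apply Proposition~\ref{thm:kernel_adjoint}.

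First, I use $M_\Theta=U_1W_{22}U_n^*$, hence $M_\Theta^*=U_nW_{22}^*U_1^*$. Since $U_n$ is unitary, for $g\in H^2_\cle(\D)$ we have $M_\Theta^*g=0$ if and only if $W_{22}^*U_1^*g=0$. Because $U_1^*$ maps $H^2_\cle(\D)$ onto $\clm^\perp$, this gives
\[
\ker M_\Theta^*=U_1\bigl(\ker W_{22}^*\bigr),
\]
where $\ker W_{22}^*$ is viewed as a subspace of $\clm^\perp$. Next, Proposition~\ref{thm:kernel_adjoint} states that $\ker W_L^*=\ker W_{22}^*$ as subspaces of $\clm^\perp$. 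In the isometric case it also gives $\ker W_L^*=\cle_L\ominus L\cle$. Combining these, $\ker M_\Theta^*=U_1(\cle_L\ominus L\cle)$. Here $\cle_L\subseteq\clm^\perp$, so applying $U_1$ makes sense.

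For the multiplicity statement, $W_L$ is an isometry and $M_\Theta$ is an isometry (since $\Theta$ is inner, again by Theorem~\ref{thm:operator_characterization_symbol}). So both multiplicities are defined as $\dim\ker(\cdot)^*$. Since $U_1$ is unitary,
\[
\operatorname{mult}(M_\Theta)=\dim U_1(\cle_L\ominus L\cle)=\dim(\cle_L\ominus L\cle)=\dim\ker W_L^*=\operatorname{mult}(W_L).
\]

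The only delicate point is the identification $\ker W_L^*=\ker W_{22}^*$: the kernel of $W_L^*$ must lie entirely in $\clm^\perp$, with no contribution from $\clm$. This was settled in Proposition~\ref{thm:kernel_adjoint}, using that $W_{11}^*$ is unitary. The remaining care is in the bookkeeping of domains and codomains of $U_1$ and $U_n$. The adjoint $W_{22}^*$ maps $\clm^\perp$ to $\cln^\perp$, so the unitary applied on the kernel side is $U_1$, not $U_n$.
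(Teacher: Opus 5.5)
Your proof is correct and follows the paper's argument essentially verbatim. You conjugate via $M_\Theta^*=U_nW_{22}^*U_1^*$ to get $\ker M_\Theta^*=U_1(\ker W_{22}^*)$, then invoke Proposition~\ref{thm:kernel_adjoint} for $\ker W_{22}^*=\ker W_L^*=\cle_L\ominus L\cle$ and compare dimensions; the preliminary observation that $W_{12}=0$ is harmless but not needed, since the identity $\ker W_L^*=\ker W_{22}^*$ in that proposition holds for arbitrary symbols.
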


\begin{proof}
Theorem~\ref{prop:hardy_space_multiplier} implies that $M_\Theta = U_1 W_{22} U_n^*$, and hence $M_\Theta^* = U_n W_{22}^* U_1^*$. Let $f \in H^2_\cle(\D)$. Then $f \in \ker M_\Theta^*$ if and only if
\[
U_n W_{22}^* U_1^* f = 0 \iff W_{22}^* (U_1^* f) = 0 \iff U_1^* f \in \ker W_{22}^*.
\]
By Proposition~\ref{thm:kernel_adjoint}, we know $\ker W_{22}^* = \ker W_L^* = \cle_L \ominus L\cle$. Since $\cle_L \subseteq \clm^\perp$, and $U_1 : \clm^\perp \to H^2_\cle(\D)$ is a unitary operator, we obtain
\[
\ker M_\Theta^* = U_1(\ker W_{22}^*) = U_1(\cle_L \ominus L\cle).
\]
Therefore,
\[
\operatorname{mult}(M_\Theta) = \dim(\ker M_\Theta^*) = \dim \big(U_1(\cle_L \ominus L\cle)\big) = \dim(\cle_L \ominus L\cle) = \dim(\ker W_L^*) = \operatorname{mult}(W_L).
\]
\end{proof}

\subsection{Fredholm theory }\label{sec:FTCT}
Corollary~\ref{cor:unitary_WL} corresponds to the vanishing of the defect space. More generally, finite-dimensional defect characterizes the Fredholm and essential normality properties of $W_L$.

\begin{proposition}\label{prop:fredholm_essentially_normal}
Let $(W_L, S^\cle)$ be an isometric representation. Then the following are equivalent:
\begin{enumerate}
    \item $W_L$ is Fredholm;
    \item $W_L$ is essentially normal;
    \item $\dim(\cle_L\ominus L\cle)<\infty$.
\end{enumerate}
In this case,
\[
\ind(W_L)=-\,\dim(\cle_L\ominus L\cle).
\]
\end{proposition}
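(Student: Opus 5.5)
Since $W_L$ is an isometry, we have $\ker W_L=\{0\}$ and $\ran W_L$ is closed. Also $W_L^*W_L=I$, so $W_L^*W_L-W_LW_L^*=I-W_LW_L^*=P_{\ker W_L^*}$, the orthogonal projection onto $(\ran W_L)^\perp=\ker W_L^*$. The whole argument therefore reduces to the single space $\ker W_L^*$. By Proposition~\ref{thm:kernel_adjoint}, in the isometric case this space is exactly $\cle_L\ominus L\cle$.

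\emph{(1)$\Leftrightarrow$(3).} An isometry is always injective with closed range. It is therefore Fredholm if and only if $\dim\ker W_L^*<\infty$, which by Proposition~\ref{thm:kernel_adjoint} means $\dim(\cle_L\ominus L\cle)<\infty$. In that case
\[
\ind(W_L)=\dim\ker W_L-\dim\ker W_L^*=0-\dim(\cle_L\ominus L\cle),
\]
which gives the index formula.

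\emph{(2)$\Leftrightarrow$(3).} The self-commutator is $W_L^*W_L-W_LW_L^*=P_{\cle_L\ominus L\cle}$. An orthogonal projection is compact if and only if its range is finite-dimensional, because its restriction to the range is the identity, and the identity on an infinite-dimensional space is not compact. Hence $W_L$ is essentially normal exactly when $\dim(\cle_L\ominus L\cle)<\infty$.

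There is no serious obstacle; the only input beyond elementary facts about isometries is the identification $\ker W_L^*=\cle_L\ominus L\cle$ from Proposition~\ref{thm:kernel_adjoint}. One remark is worth recording. Via Theorem~\ref{thm:fock_classification}(1), an isometric representation means $L$ is an isometry with $L\cle\subseteq\cle_L$, so $L\cle$ is closed and the orthogonal difference $\cle_L\ominus L\cle$ is well defined.
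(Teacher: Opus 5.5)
Your proposal is correct and follows essentially the same route as the paper. In both, the self-commutator of the isometry $W_L$ is $P_{\ker W_L^*}$, the kernel is identified with $\cle_L\ominus L\cle$ via Proposition~\ref{thm:kernel_adjoint}, and Fredholmness together with $\ind(W_L)=-\dim(\cle_L\ominus L\cle)$ follows because an isometry is injective with closed range.
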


\begin{proof}
We first show that $(2)$ and $(3)$ are equivalent. Since $W_L$ is an isometry,
\[
[W_L^*,W_L]
=
W_L^*W_L-W_LW_L^*
=
I-W_LW_L^*
=P_{\ker W_L^*}.
\]
It follows that $W_L$ is essentially normal if and only if $P_{\ker W_L^*}$ is compact. Since an orthogonal projection is compact if and only if its range is finite-dimensional, using Proposition~\ref{thm:kernel_adjoint}, we conclude that
\[
W_L \text{ is essentially normal }
\iff
\dim\ker W_L^*=\dim(\cle_L\ominus L\cle)<\infty.
\]
This proves the equivalence of $(2)$ and $(3)$.

Next, since $W_L$ is an isometry, we have $\ker W_L=\{0\}$ and $\ran W_L$ is closed. Thus $W_L$ is Fredholm if and only if $\dim(\ran W_L)^\perp
=\dim\ker W_L^*
<\infty.$ Therefore
\[
W_L \text{ is Fredholm }
\iff
\dim(\cle_L\ominus L\cle)<\infty.
\]
This proves the equivalence of $(1)$ and $(3)$.

Finally, if these equivalent conditions hold, then the Fredholm index of $W_L$ is
\[
\ind(W_L)
=
\dim\ker W_L-\dim\ker W_L^*=
-\dim\ker W_L^*
=-\,\dim(\cle_L\ominus L\cle).
\]
\end{proof}

The following example illustrates that the Fredholmness of the odometer map is completely determined by the dimension of the defect space \(\cle_L\ominus L\cle\).

\begin{example}\label{ex:fredholm_shift_case}
Consider the symbol $L$ from Example~\ref{rem:wold_and_mult}, given by $L\eta = e_1 \otimes \eta, \, \eta \in \cle.$
As computed there, $\ker W_L^*
=
\Omega\otimes\cle.$ Hence $W_L$ is Fredholm if and only if $\dim \cle < \infty$. But  $\dim \cle=\dim(\cle_L \ominus L\cle).$ Thus this example verifies Proposition~\ref{prop:fredholm_essentially_normal}. Moreover,
\[
\ind(W_L)
=
-\dim(\ker W_L^*)
=
-\dim\cle.
\]

In particular, if $\cle=\C^m$ for some $m\in\mathbb N$, then $W_L$ is Fredholm
and $\ind(W_L)=-m,$
the same index as the classical vector-valued unilateral shift of multiplicity
\(m\) on \(H^2_{\mathbb C^m}(\D)\).
\end{example}

The next example exhibits a nontrivial family of isometric odometer maps for which the defect space, Fredholmness, and Fredholm index can be computed explicitly.

\begin{example}\label{ex:projection_fredholm}
Let $P\in\clb(\cle)$ be an orthogonal projection, and define
\[
L:\cle\to \clf_n^2\otimes \cle,
\qquad
L\eta=\Omega\otimes (I-P)\eta+e_1\otimes P\eta,
\qquad \eta\in\cle.
\]
Then $L$ is an isometry, since for every $\eta\in\cle$,
\[
\|L\eta\|^2
=
\|(I-P)\eta\|^2+\|P\eta\|^2
=
\|\eta\|^2.
\]
We now compute $\cle_L$. 
Since $e_1^{ p}\otimes L\zeta
=
e_1^{p}\otimes (I-P)\zeta
+
e_1^{(p+1)}\otimes P\zeta,$
it follows that
\[
\overline{\operatorname{span}}\{e_1^{p}\otimes L\zeta:\ p\ge1,\ \zeta\in\cle\}
=
\left(\bigoplus_{m\ge1} e_1^{ m}\otimes (I-P)\cle\right)
\oplus
\left(\bigoplus_{m\ge2} e_1^{ m}\otimes P\cle\right).
\]
Therefore, by \eqref{eq:EL}, $\cle_L=(\Omega\otimes \cle)\oplus (e_1\otimes P\cle),$ and by Theorem~\ref{thm:fock_classification}, $(W_L,S^\cle)$ is an isometric representation.
On the other hand, $L\cle=(\Omega\otimes (I-P)\cle)\oplus (e_1\otimes P\cle).$
Hence $\cle_L\ominus L\cle=\Omega\otimes P\cle.$
Consequently,
\[
\dim(\cle_L\ominus L\cle)=\dim(P\cle)=\rank P.
\]
It now follows from Proposition~\ref{prop:fredholm_essentially_normal} that $W_L$ is Fredholm if and only if $\rank P<\infty$, and in that case
\[
\ind(W_L)=-\,\rank P.
\]
\end{example}

\subsection{Spectral consequences and a Coburn-type theorem}\label{sec:coburn_type}
A classical consequence of Coburn's theorem states that if $T$ is a proper isometric Toeplitz operator of finite multiplicity, then its essential spectrum is the unit circle. Moreover, for every \(\lambda\in\D\), one has  (cf. \cite{coburn1966}):
\[
\ind(T-\lambda I) = -\dim\ker T^*, \qquad \text{for all } \lambda \in \D.
\]
Inspired by this classical result, we use our earlier characterization of Fredholmness to establish a similar spectral theorem for odometer maps on the free Fock space.

\begin{theorem}\label{thm:coburn_odometer}
Let \(W_L\) be an isometric odometer map and suppose that $\dim(\cle_L\ominus L\cle)<\infty.$
Then, for every \(\lambda\in\D\), the operator \(W_L-\lambda I\) is Fredholm and
\[
\ind(W_L-\lambda I)
=
-\dim\ker W_L^*
=
-\dim(\cle_L\ominus L\cle).
\]
If, in addition, \(W_L\) is not unitary, then $\sigma_{\mathrm{ess}}(W_L)=\T.$
\end{theorem}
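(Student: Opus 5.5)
The plan is to combine Proposition~\ref{prop:fredholm_essentially_normal} with the standard lower-bound argument for isometries and the local constancy of the index on the semi-Fredholm set. Set $d:=\dim(\cle_L\ominus L\cle)<\infty$. By Proposition~\ref{thm:kernel_adjoint}, $\ker W_L^*=\cle_L\ominus L\cle$. By Proposition~\ref{prop:fredholm_essentially_normal}, $W_L$ is Fredholm with $\ind(W_L)=-d$, which settles the case $\lambda=0$.

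For arbitrary $\lambda\in\D$ we show that $W_L-\lambda I$ is bounded below. Since $W_L$ is an isometry,
\[
\|(W_L-\lambda I)x\|\geq \|W_Lx\|-|\lambda|\,\|x\|=(1-|\lambda|)\|x\|,\qquad x\in\clf_n^2\otimes\cle.
\]
Hence $\ker(W_L-\lambda I)=\{0\}$ and $\ran(W_L-\lambda I)$ is closed, so $W_L-\lambda I$ is upper semi-Fredholm for every $\lambda\in\D$.

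The map $\lambda\mapsto W_L-\lambda I$ is norm-continuous, and $\D$ is connected. Because the index is locally constant on the semi-Fredholm set (with values in $\Z\cup\{-\infty\}$), the index is constant on $\D$ and therefore equals its value $-d$ at $\lambda=0$. In particular it is finite, which forces $\dim\ker(W_L-\lambda I)^*=d<\infty$. So $W_L-\lambda I$ is Fredholm with $\ind(W_L-\lambda I)=-d=-\dim\ker W_L^*$.

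For the essential spectrum, assume $W_L$ is not unitary. Since $W_L$ is an isometry, $\|W_L\|=1$, so $\sigma(W_L)\subseteq\ol{\D}$, and $W_L-\lambda I$ is invertible, hence Fredholm, for $|\lambda|>1$. We have just shown that $W_L-\lambda I$ is Fredholm for $\lambda\in\D$, so $\sigma_{\mathrm{ess}}(W_L)\subseteq\T$.

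For the reverse inclusion, suppose some $\lambda_0\in\T$ had $W_L-\lambda_0 I$ Fredholm. The Fredholm set is open, so it contains a neighbourhood of $\lambda_0$, and that neighbourhood meets both $\D$ and $\{|\lambda|>1\}$. By local constancy of the index on this connected neighbourhood, the index on $\D$ would equal the index $0$ outside $\ol\D$, giving $-d=0$. But $W_L$ is an isometry that is not unitary, so $\ker W_L^*\neq\{0\}$ and $d\geq1$, a contradiction. Hence $\sigma_{\mathrm{ess}}(W_L)=\T$.

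The main obstacle is the index-continuity step for $\lambda\in\D$. Finite-dimensionality of $\ker(W_L-\lambda I)^*$ does not follow from bounded-belowness alone; it must be extracted from local constancy of the index on the semi-Fredholm set, which holds even allowing infinite index, together with connectedness of $\D$. As an alternative, I would use the Wold decomposition $W_L\cong S_d\oplus U$ with $U$ unitary and $S_d$ the unilateral shift of multiplicity $d$. For $|\lambda|<1$ the operator $U-\lambda I$ is invertible, and $S_d-\lambda I$ has cokernel of dimension $d$, which gives the index directly.
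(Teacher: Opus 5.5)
Your proof is correct. The first part matches the paper's argument exactly: $W_L-\lambda I$ is bounded below on $\D$, hence upper semi-Fredholm; local constancy of the index on the connected set $\D$ carries $\ind(W_L)=-d$ over from $\lambda=0$; and trivial kernel plus finite index then forces a finite cokernel.

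You differ from the paper on the inclusion $\T\subseteq\sigma_{\mathrm{ess}}(W_L)$.

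\begin{itemize}
\item The paper uses the Wold decomposition $W_L\cong U\oplus S_d$ with $0<d<\infty$. It quotes the classical fact $\sigma_{\mathrm{ess}}(S_d)=\T$ and then uses $\sigma_{\mathrm{ess}}(S_d)\subseteq\sigma_{\mathrm{ess}}(U\oplus S_d)$.
\item You argue by an index jump. A Fredholm point $\lambda_0\in\T$ would have a disc neighbourhood inside the Fredholm set. That disc meets $\D$, where the index is $-d$, and $\{|\lambda|>1\}$, where it is $0$. This contradicts $d=\dim\ker W_L^*\geq 1$, which holds because $W_L$ is a non-unitary isometry.
\end{itemize}

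Your route is self-contained. It effectively proves the essential-spectrum fact for the shift rather than citing it, and it needs no structural decomposition, only the index data you already established. The paper's route is shorter once the classical result for $S_d$ is taken as known. It also makes explicit the shift-plus-unitary structure behind the conclusion.
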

\begin{proof}
As $\dim(\cle_L\ominus L\cle)<\infty,$ 
Proposition~\ref{prop:fredholm_essentially_normal} yields that $W_L$ is Fredholm with $\ind(W_L)
=
-\dim(\cle_L\ominus L\cle).$
Let $\lambda \in \D$. Since $W_L$ is an isometry, the reverse triangle inequality implies
\[
\|(W_L-\lambda I)x\|
\ge
\|W_Lx\|-|\lambda|\|x\|
=
(1-|\lambda|)\|x\|.
\]
Since $|\lambda|<1$, it follows that $W_L-\lambda I$ is bounded below. In particular, its kernel is trivial and its range is closed. Therefore, $W_L-\lambda I$ is semi-Fredholm for every $\lambda \in \D$. Now the map
\[
\lambda\mapsto W_L-\lambda I
\]
is continuous in the operator norm, and its image lies in the set of semi-Fredholm operators. Since the index is locally constant on the semi-Fredholm set and $\D$ is connected, we conclude that
\[
\ind(W_L-\lambda I)=\ind(W_L)=
-\dim(\cle_L\ominus L\cle)
\qquad (\lambda\in\D).
\]
Since $\ker(W_L-\lambda I)=\{0\}$ and the index is finite, it follows that the cokernel is finite-dimensional. Hence $W_L-\lambda I$ is Fredholm for every $\lambda\in\D$.  This proves the first assertion.

Consequently $\D\cap\sigma_{\mathrm{ess}}(W_L)=\varnothing.$
Since $W_L$ is an isometry, $\sigma(W_L)\subseteq \overline{\D}.$
Consequently, $\sigma_{\mathrm{ess}}(W_L)\subseteq \T.$
Now assume, in addition, that \(W_L\) is not unitary. Then \(W_L\) is a proper
isometry. By the Wold decomposition, $W_L \cong U\oplus S_d,$
where \(U\) is unitary and \(S_d\) is the unilateral shift of multiplicity
\[
0<d=\dim\ker W_L^*
=
\dim(\cle_L\ominus L\cle)<\infty.
\]
The essential spectrum of the unilateral shift of finite positive multiplicity
is \(\T\). Therefore,
\[
\T
=
\sigma_{\mathrm{ess}}(S_d)
\subseteq
\sigma_{\mathrm{ess}}(U\oplus S_d)
=
\sigma_{\mathrm{ess}}(W_L).
\]
Combining both inclusions
we obtain
\[
\sigma_{\mathrm{ess}}(W_L)=\T.
\]
\end{proof}

\subsection{Hyponormality} \label{sec:hyponormal}
\begin{theorem}\label{thm:hyponormality_bound}
Assume $n \ge 2$. Let $W_L \in \clb(\clf_n^2 \otimes \cle)$ be an odometer map with symbol $L$. If $W_L$ is a hyponormal operator, then 
\[
\|L\eta\| \ge \|\eta\| \quad \text{for all } \eta \in \cle.
\]
Consequently, if \(\|L\|<1\) or if \(L\) has a non-trivial kernel, then
\(W_L\) cannot be hyponormal and, consequently, cannot be subnormal.
\end{theorem}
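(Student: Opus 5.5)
Hyponormality of $W_L$ means $\|W_L^*x\|\le\|W_Lx\|$ for all $x$, and I will test this inequality on a single well-chosen vector. The natural choice is $x=e_n\otimes\eta$ with $n\ge 2$ (or more generally $e_k^m\otimes\eta$ with $2\le k\le n$). For this vector the forward side is easy to compute from \eqref{eq:defn-W_L}: $W_L(e_n\otimes\eta)=e_1\otimes L\eta$, so $\|W_L(e_n\otimes\eta)\|=\|L\eta\|$.

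For the adjoint side I will invoke Corollary~\ref{cor:WL_adjoint}(1) with $k=n$ and $m=1$:
\[
W_L^*(e_n\otimes\eta)=e_{n-1}\otimes\eta+\Omega\otimes L^*(e_n\otimes\eta).
\]
Since $n\ge2$, the word $n-1$ has length one, so the two summands lie in the orthogonal subspaces $e_{n-1}\otimes\cle$ and $\Omega\otimes\cle$. Hence
\[
\|W_L^*(e_n\otimes\eta)\|^2=\|\eta\|^2+\|L^*(e_n\otimes\eta)\|^2\ge\|\eta\|^2 .
\]
Combining this with hyponormality gives $\|\eta\|^2\le\|W_L^*(e_n\otimes\eta)\|^2\le\|W_L(e_n\otimes\eta)\|^2=\|L\eta\|^2$, which is exactly the claimed bound $\|L\eta\|\ge\|\eta\|$.

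For the consequences: if $\|L\|<1$, then any $\eta\neq0$ gives $\|L\eta\|<\|\eta\|$, contradicting the bound. If $\ker L\neq\{0\}$, a nonzero $\eta$ in the kernel gives $0\ge\|\eta\|>0$, again a contradiction. Since every subnormal operator is hyponormal, subnormality is excluded in both cases as well.

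The only point that needs care is the adjoint identity: confirming that the first term is $e_{n-1}\otimes\eta$ and that it is orthogonal to the vacuum term, which is where $n\ge2$ is used. If the corollary's notation caused doubt here, I would verify the identity directly. The inner product $\langle W_L^*(e_n\otimes\eta),e_\mu\otimes\xi\rangle=\langle e_n\otimes\eta,W_L(e_\mu\otimes\xi)\rangle$ is nonzero only when $\mu=n-1$ or $\mu=\emptyset$, since the only preimage word of $e_n$ under the carry rule is $e_{n-1}$.
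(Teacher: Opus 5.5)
Your proposal is correct and follows essentially the same argument as the paper: test hyponormality on $x=e_n\otimes\eta$, compute $\|W_Lx\|=\|L\eta\|$, and use Corollary~\ref{cor:WL_adjoint}(1) with $k=n$, $m=1$ together with the orthogonality of $e_{n-1}\otimes\cle$ and $\Omega\otimes\cle$ (for $n\ge2$) to get $\|W_L^*x\|\ge\|\eta\|$. One small caveat: your parenthetical ``more generally $e_k^m\otimes\eta$ with $2\le k\le n$'' only works for $k=n$, since for $k<n$ the vector $W_L(e_k^m\otimes\eta)$ does not involve $L$; this aside is not used in your argument.
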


\begin{proof}
Let $W_L$ be hyponormal. This requires $\|W_L x\| \ge \|W_L^* x\|$ for all $x \in \clf_n^2 \otimes \cle$. Fix $\eta \in \cle$ and consider $x = e_n \otimes \eta$. 
By the definition of the odometer map, $W_L(e_n \otimes \eta) = e_1 \otimes L\eta$, and hence 
\[
\|W_L x\| = \|e_1 \otimes L\eta\| = \|L\eta\|.
\]
To evaluate the adjoint action on $x$, we apply the explicit formula from Corollary~\ref{cor:WL_adjoint}(1) with $k=n$ and $m=1$:
\[
W_L^*(e_n \otimes \eta) = e_{n-1} \otimes \eta + \Omega \otimes L^*(e_n \otimes \eta).
\]
Since \(n\ge2\), the vector \(e_{n-1}\) is a word of length one, and hence
\(e_{n-1}\otimes\eta\perp \Omega\otimes\cle\). Therefore,
\[
\|W_L^* x\|^2 = \|e_{n-1} \otimes \eta\|^2 + \|\Omega \otimes L^*(e_n \otimes \eta)\|^2 = \|\eta\|^2 + \|L^*(e_n \otimes \eta)\|^2.
\]
This implies $\|W_L^* x\|^2 \ge \|\eta\|^2$. Enforcing the hyponormality condition $\|W_L x\| \ge \|W_L^* x\|$ yields $\|L\eta\| \ge \|\eta\|$, completing the proof.
\end{proof}

We now show that the lower-bound condition in Theorem~\ref{thm:hyponormality_bound}
is necessary but not sufficient for hyponormality.

\begin{example}
Assume $n\ge2$, and fix a unit vector $h\in\cle$. Define
\(L:\cle\to\clf_n^2\otimes\cle\) by
\[
L\eta=\Omega\otimes\eta+e_1\otimes\eta,
\qquad \eta\in\cle.
\]
Then
\[
\|L\eta\|=\sqrt2\,\|\eta\|\geq \|\eta\|,
\qquad \eta\in\cle.
\]
Thus $L$ satisfies the necessary lower bound condition.
We show that $W_L$ is not hyponormal. Let $x=e_1\otimes h$. Then
\[
W_Lx=e_2\otimes h,
\qquad
\|W_Lx\|=1.
\]
On the other hand, the general adjoint formula gives
\[
W_L^*x=\Omega\otimes h+e_n\otimes h.
\]
Hence
\[
\|W_L^*x\|^2=2>1=\|W_Lx\|^2.
\]
Therefore, $W_L$ is not hyponormal.
\end{example}

\noindent\textbf{Acknowledgements.}
The author is deeply grateful to Professor Baruch Solel for carefully reading an
earlier draft of this manuscript and for his valuable comments and suggestions,
which helped improve the presentation and clarity of the paper. The author also
thanks the Department of Mathematics, Technion--Israel Institute of Technology,
Haifa, Israel, for its institutional support and for providing a conducive
research environment during the preparation of this work.

\end{document}